\documentclass[11pt]{amsart}
 
\theoremstyle{plain}

\usepackage{amsmath,amssymb,amsfonts}
\usepackage{enumerate,comment}
\usepackage{amscd, amssymb, latexsym, amsmath, amscd,xcolor}
\usepackage[all]{xy}
\usepackage{pb-diagram}
\usepackage{picture}
\usepackage{graphicx,tikz}
\usepackage{multicol,lipsum}
\usepackage{url}

\pagestyle{myheadings}
\newtheorem{thm}[equation]{Theorem}
\newtheorem{prop}[equation]{Proposition}
\newtheorem{cor}[equation]{Corollary}

\newtheorem{lemma}[equation]{Lemma}

\newtheorem{definition}[equation]{Definition}
\newtheorem{remark}[equation]{Remark}
\numberwithin{equation}{section}

\newcommand{\Q}{\mathbb Q}
\newcommand{\Z}{\mathbb Z}
\newcommand{\R}{\mathbb R}
\newcommand{\C}{\mathbb C}

\newcommand{\A}{\mathbb A}

\newcommand{\ps}{\par \medskip}

\def\Hom{{\rm Hom}}
\def\Aut{{\rm Aut}}

\def\SL{{\rm SL}}

\def\GSp{{\rm GSp}}

\def\PGSp{{\rm PGSp}}
\def\GSpin{{\rm GSpin}}
\def\PGSO{{\rm PGSO}}
\def\PGO{{\rm PGO}}
\def\Spin{{\rm Spin}}
\def\Sp{{\rm Sp}}
\def\Spin{{\rm Spin}}

\def\GO{{\rm GO}}
\def\GL{{\rm GL}}
\def\PGL{{\rm PGL}}
\def\GSO{{\rm GSO}}

\def\SO{{\rm SO}}

\def\Sp{{\rm Sp}}
\def\Mp{{\rm Mp}}

\def\O{{\rm O}}

\def\Irr{{\rm Irr}}
\newcommand{\peacealt}{%
  \tikz[scale=0.1, line width=0.4pt, baseline=-0.5ex]{
    \draw (0,0) circle (1);
    \draw (0,0) -- (210:1);
    \draw (0,0) -- (330:1);
    \draw (0,0) -- (90:1);
  }%
}
\def\GGGSpin{{}^{\peacealt}{\rm Spin}}
\usepackage[all]{xy}

\def\A{{\mathbb A}}

\def\R{{\mathbb R}}

\def\Z{{\mathbb Z}}

\def\C{{\bf C}}

\def\C{{\mathbb C}}

\let\ssim\sim   
\def\sim{{\rm sim}}

\setlength{\oddsidemargin}{0.2in}
\setlength{\evensidemargin}{0.2in}
\setlength{\textwidth}{6.1in}
\setlength{\unitlength}{2.5cm}

\begin{document}

\title{Triality and Functoriality} 
 \author{Gaetan Chenevier and Wee Teck Gan}
 
 \address{D\'epartement de math\'ematiques et applications, \'Ecole Normale Sup\'erieure PSL, 45 rue d'Ulm, 75230 Paris Cedex, France}
\email{gaetan.chenevier@math.cnrs.fr}

\address{Department of Mathematics, National University of Singapore, 10 Lower Kent Ridge Road
Singapore 119076} 
\email{matgwt@nus.edu.sg}

 \subjclass[2010]{Primary 11F70, Secondary 22E50}

\begin{abstract}
We use the triality automorphism of simple algebraic groups of type $D_4$ 
to prove some new instances of global Langlands functorial lifting. 
In particular, we prove the (weak) spin lifting from $\GSp_6$ to $\GL_8$ 
and the tensor product lifting from $\GL_2 \times \GSp_4$ to $\GL_8$. 
As an arithmetic application, we establish the expected properties 
of the spinor $L$-function attached to an arbitrary Siegel modular cusp form 
for $\Sp_6(\Z)$ generating a holomorphic discrete series. 
\end{abstract}	

\maketitle

\section{\bf Introduction}

It is a truth universally acknowledged, that a single mathematical object in possession of a good many symmetries compared to its peers, must be  in want of our attention.\footnote{a tribute to Jane Austen, on the occasion of her 250th anniversaire}
\vskip 5pt

Taking this principle to heart and putting aside our own {\it Pride and Prejudice}, we  consider in this paper the series $D_n$  of simple Lie groups or Lie algebras, whose  Dynkin diagram is given by:
\vskip 10pt

 \[
\xymatrix@R=10pt{
&   &   &    &      \bullet  \\
 \bullet  \ar@{-}[r] & \bullet  \ar@{-}[r] & \cdots& \cdots \bullet  \ar@{-}[ru] \ar@{-}[rd] &      \\
&   &   &    &      \bullet
}
\]

\vskip 10pt
\noindent 
with $n$ equal to the number of vertices. 
Hence,  one may be looking at the split linear algebraic groups $\SO_{2n}$ (over some field $F$) or its Lie algebra $\mathfrak{so}_{2n}$.
By Lie theory, the outer automorphism group ${\rm Out}(D_n)$ of $\mathfrak{so}_{2n}$ is equal to the group of symmetries of its Dynkin diagram. Thus one finds that
\[  {\rm Out}(D_n)  = \begin{cases}
\Z/2\Z, \text{  if $n \ne 4$;} \\
\text{the symmetric group $S_3$,  if $n=4$,}  \end{cases} \] 
so that  the Lie algebra of type $D_4$ has more symmetries than the generic Lie algebra of type $D_n$, as is evident from its Dynkin diagram:
\[
\xymatrix@R=10pt{
&   & \bullet   \ar@{-}[ld]  \\
\bullet \ar@{-}[r] &   \bullet &  \\
 &  &  \bullet \ar@{-}[lu]  \\
} \]

\noindent  In particular, it has an outer automorphism $\theta$ of order $3$ and this phenomenon is aptly named {\em triality}.  The purpose of this paper is to investigate some implications of  the existence of this extra symmetry  in the Langlands program.

\vskip 5pt

Let us be more precise. If one considers the special orthogonal group $\SO_{2n}$, then its outer automorphism group is indeed $\Z/2\Z$ for all $n$, with an outer automorphism  realised by an element in $\O_{2n} \setminus \SO_{2n}$. This is because the triality automorphism $\theta$ of $\mathfrak{so}_8$ cannot be realised on $\SO_{8}$, but only on the simply-connected or adjoint form of the group, i.e. the group $\Spin_8$ or $\PGSO_8$ respectively.  
 The center $Z_{\Spin_8}$ of $\Spin_8$ is the finite group scheme $\mu_2 \times\mu_2$ which has 3 nontrivial subgroups of order $2$. The triality automorphism  preserves the center, permuting the 3 subgroups of order $2$, so that it descends to an automorphism of the adjoint group $\PGSO_8$. This implies that one has a commutative diagram of isogenies:
 \[
\xymatrix@R=10pt{
 & \Spin_8  \ar[ld]  \ar[d] \ar[rd] & \\
\SO_8    \ar[rd]   &    \SO_8    \ar[d]   &   \ar[ld]   \SO_8 \\
& \PGSO_8 & 
}
\]
In particular, there are three non-conjugate maps
 \[  \rho_j :  \Spin_8 \twoheadrightarrow \SO_8, \]
whose kernels are the three central $\mu_2$-subgroups in $\Spin_8$. These  three maps are cyclically permuted by the triality automorphism, and together give an embedding 
 \[  \rho = \prod_j \rho_j : \Spin_8 \hookrightarrow \SO_8^3. \]
 In Section 2, following \cite{KMRT}, we describe a construction of the group $\Spin_8$ as a subgroup of $\SO_8^3$, equipped with an outer automorphism of order $3$ which is simply the restriction of the cyclic permutation on $\SO_8^3$.  The construction in \S 2 thus gives a realisation of the three 8-dimensional irreducible fundamental representations of $\Spin_8$. 
  Likewise, there are 3 non-conjugate maps
 \[  f_j : \SO_8 \longrightarrow \PGSO_8 \]
 which are cyclically permuted by the triality automorphism.
  The existence of the  3 different maps $f_j$ (or $\rho_j$)  is a manifestation of the principle of  triality.
 \vskip 5pt

In this paper, we shall consider some  implications of the triality automorphism in the  Langlands program.
 In particular, we shall  exploit triality to construct some new instances of Langlands functoriality.
 \vskip 10pt

  \subsection{\bf Langlands functoriality} \label{SS:Lfunctorial}
 We begin with a brief recollection of the notion of Langlands functoriality.
Suppose that $\pi = \otimes_v \pi_v$  is an irreducible automorphic representation  of a split reductive group $H$ over a number field $k$.
For $v$ outside a finite set $S$ of places of $k$, the local representation $\pi_v$ of $H(k_v)$ is unramified and gives rise to a semisimple conjugacy class $c(\pi_v)$ in the Langlands dual group $H^{\vee}$ via the Satake isomorphism. Thus, $\pi$ gives rise to a collection of semisimple conjugacy classes 
\[  c(\pi) := \{ c(\pi_v): v \notin S \}  \]
which encodes the Hecke eigenvalues of $\pi$ for the spherical Hecke algebras at almost all places of $k$. 
We consider two such collections of semisimple classes in $H^{\vee}$ to be equivalent if they agree at all but finitely many places; in other words, we are allowed to enlarge the finite set $S$ if necessary. We shall call (the equivalence class of) $c(\pi)$  the {\it Hecke-Satake family}  of $\pi$. 

\vskip 5pt

Now suppose that $G$ is another split reductive group and we have an algebraic group homomorphism
\[  \iota: H^{\vee} \longrightarrow G^{\vee}. \]
Then, by composition with $\iota$,  a Hecke-Satake family $c(\pi)$ gives rise to a family of semisimple conjugacy classes 
\[ \iota(c(\pi)) :=  \{ \iota(c(\pi_v)): v \notin S \}   \]
in $G^{\vee}$. One may ask if there exists an irreducible automorphic representation $\sigma$ of $G$ whose Hecke-Satake family agrees with  $\iota(c(\pi))$   for almost all $v$; we shall express this as:
\[  \iota(c(\pi)) = c(\sigma). \]
The Langlands functoriality conjecture asserts that the answer is affirmative, in which case one says that the automorphic representation $\sigma$ is a {\em weak functorial lifting} of $\pi$.

\vskip 5pt

As an example, consider the map of dual groups
\[  f_j^{\vee}:  \Spin_8(\C) \longrightarrow \SO_8(\C) \]
which is dual to the map 
\[ f_j: \SO_8 \longrightarrow \PGSO_8. \]
Then the weak functorial lifting associated to $f_j^{\vee}$ exists and is simply given by any irreducible constituent of the pullback of an automorphic representation by $f_j$ (see Proposition \ref{P:isogeny}). 
\vskip 5pt

As another example, suppose that $H$ is a symplectic or orthogonal  group, so that its Langlands dual group $H^{\vee}$ is  also a classical group. Then $H^{\vee}$ has a standard representation
\[  {\rm std}: H^{\vee} \rightarrow \GL_N(\C),\]
which should induce a weak functorial lifting of automorphic representations from $H$ to $\GL_N$.   The existence of this weak functorial lifting  is highly nontrivial and has been shown by Cogdell-Kim-Piatetski-Shapiro-Shahidi \cite{CKPSS} (for generic automorphic representations) and  Cai-Friedberg-Kaplan \cite{CFK} (for all automorphic representations) using Converse Theorems,  and independently by Arthur \cite{A} (for all automorphic  representations) using the stable twisted trace formula. 
The most  pertinent case for us is  $H = \SO_8$ with its standard representation ${\rm std}: \SO_8(\C) \longrightarrow \GL_8(\C)$. 

\vskip 10pt

\subsection{\bf Triality and  Langlands functoriality}
 We shall see that  the principle of triality can be exploited to obtain some interesting results towards Langlands functoriality. The basic idea is the following. Suppose one has  a morphism of dual groups 
\[  \begin{CD} 
H^{\vee} @>\iota>> \PGSO_8^{\vee} = \Spin_8(\C) @>f_1^{\vee}>> \SO_8(\C) \end{CD} \]
and we are able to construct the weak functorial lifting for $\iota$ and hence for $f_1^{\vee}\circ \iota$  (since the weak functorial lifting for $f_1^{\vee}$ is given by pullback of automorphic forms, as we have explained in the previous subsection). Now one may compose $\iota$ with the triality automorphism $\theta$ and obtain another map
\[  \begin{CD}
 H^{\vee} @>\iota>> \Spin_8(\C)  @>\theta>>  \Spin_8(\C) @>f_1^{\vee}>> \SO_8(\C).  \end{CD} \]
 This composite map may give a drastically different instance of Langlands functoriality from $H$ to $\SO_8$ since $\theta \circ \iota$ and $\iota$ may not be conjugate in $\Spin_8(\C)$. The map $f_1^{\vee} \circ\theta \circ \iota$ can  be more simply described  as
 \[  \begin{CD} 
 H^{\vee} @>\iota>> \Spin_8(\C)   @>f_2^{\vee}>> \SO_8(\C).   \end{CD} \]
 The existence of the triality automorphism essentially gives one this new functorial lifting for $f_2^{\vee} \circ \iota$ with minimal effort. This idea has already been observed and exploited in the monograph \cite{CL} of the first author and Lannes and we push it further in the present paper. It has also played a critical role in the proof of the local Langlands conjecture for ${\rm G}_2$ given in  \cite{GS2}. 
\vskip 5pt

By using this simple idea, we prove  the following instances of Langlands functorial lifting:
 \vskip 5pt

\begin{thm} \label{T:intro1}
(i) Consider the map of dual groups
\[  {\rm spin}: \Spin_7(\C) \longrightarrow \GL_8(\C) \]
given by the Spin representation.  Then  for any   cuspidal representation $\pi$ of $\PGSp_6$ whose restriction to $\Sp_6$ has a generic (or tempered) A-parameter, the corresponding weak Spin lifting of $\pi$ to $\GL_8$ exists. 
\vskip 5pt

  
  


(ii) The Rankin-Selberg lifting of automorphic representations {\em of symplectic type} from $\GL_4 \times \GL_2$ to $\GL_8$, corresponding to the following maps of dual groups:
\[  \begin{CD} 
 \Sp_4(\C) \times \SL_2(\C)  @>\boxtimes>>  \SO_8(\C)   @>{\rm std}>> \GL_8(\C)
 \end{CD} \]
exists.
\end{thm}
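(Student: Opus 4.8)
\emph{The strategy.} Both parts follow from the triality mechanism above, once the relevant homomorphisms are pinned down. Set $M=\Spin_7$ in part~(i) (so $M(\C)=\PGSp_6^\vee$), and $M=\Spin_5\times\Spin_3$ in part~(ii) (so $M(\C)=\Sp_4(\C)\times\SL_2(\C)$, which contains the Hecke--Satake families of the symplectic-type representations $\sigma_1$ and $\sigma_2$). Let $\iota\colon M(\C)\hookrightarrow\Spin_8(\C)$ be the homomorphism induced by $\SO_7\hookrightarrow\SO_8$, resp.\ by $\SO_5\times\SO_3\hookrightarrow\SO_8$; it lifts to $\Spin_8(\C)$ because $M$ is simply connected. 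A branching computation identifies the restrictions along $\iota$ of the three $8$-dimensional representations ${\rm std}\circ\rho_j$ $(j=1,2,3)$ of $\Spin_8(\C)$, namely of the vector representation $V_8$ and of the two half-spin representations $S^\pm$: in part~(i), $V_8|_M=V_7\oplus\mathbf 1$ while $S^+|_M=S^-|_M$ equals the $8$-dimensional Spin representation ${\rm spin}\colon\Spin_7(\C)\to\GL_8(\C)$; in part~(ii), $V_8|_M=(\wedge^2_0{\rm std}_4)\boxtimes\mathbf 1\ \oplus\ \mathbf 1\boxtimes\Sym^2{\rm std}_2$ while $S^+|_M=S^-|_M={\rm std}_4\boxtimes{\rm std}_2$. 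Since the triality automorphism $\theta$ cyclically permutes $\rho_1,\rho_2,\rho_3$, post-composing $\iota$ with $\theta$ exchanges the ``easy'' representation $V_8$ with the ``hard'' one -- Spin, resp.\ tensor product -- that we wish to realise. Because we only want a weak lift, it suffices to match Hecke--Satake families at almost all places.

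\emph{The base lift.} Using already-established functoriality, we first produce an isobaric automorphic representation $\Pi^\flat$ of $\GL_8$ whose Hecke--Satake family is $({\rm std}\circ\rho_1\circ\iota)$ applied to that of the source. In part~(i), $\Pi^\flat=\Pi_7\boxplus\mathbf 1$, where $\Pi_7$ is the standard lift to $\GL_7$ of an irreducible constituent of $\pi|_{\Sp_6(\A)}$; this lift exists by \cite{A} (or \cite{CKPSS}), is independent of the chosen constituent, and -- under the hypothesis that the A-parameter is generic, equivalently tempered -- expresses $\Pi_7$ as an isobaric sum of distinct self-dual cuspidal representations, each of orthogonal type and of trivial determinant, so that $\Pi^\flat$ is self-dual, of orthogonal type, and of trivial determinant. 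In part~(ii), $\Pi^\flat=\tau\boxplus\Sym^2\sigma_2$, where $\Sym^2\sigma_2$ is the Gelbart--Jacquet symmetric square lift on $\GL_3$ and $\tau$ is the degree-$5$ summand of Kim's exterior square lift $\wedge^2\sigma_1=\mathbf 1\boxplus\tau$ on $\GL_6$, the trivial summand occurring precisely because $\sigma_1$ is of symplectic type; here too $\tau$ and $\Sym^2\sigma_2$ are self-dual of orthogonal type with trivial determinant, hence so is $\Pi^\flat$. In both cases the Hecke--Satake family of $\Pi^\flat$, viewed in $\SO_8(\C)$, equals $(\rho_1\circ\iota)$ applied to that of the source; since $\iota$ already takes values in $\Spin_8(\C)=\PGSO_8^\vee$, this family lifts to $\Spin_8(\C)$.

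\emph{Descent to the adjoint group, and triality.} As $\Pi^\flat$ has trivial determinant it is a lift from the split group $\SO_8$: Arthur's classification of the automorphic spectrum of $\SO_8$ (supplemented, where $\Pi^\flat$ is not an elliptic parameter, by the theory of Eisenstein series; alternatively the descent of Ginzburg--Rallis--Soudry) yields an automorphic representation $\Sigma^\flat$ of $\SO_8$ whose standard functorial lift to $\GL_8$ is $\Pi^\flat$. Because the Hecke--Satake family of $\Sigma^\flat$ lifts to $\Spin_8(\C)=\PGSO_8^\vee$, the representation $\Sigma^\flat$ descends to the adjoint group $\PGSO_8$: there is an automorphic representation $\Sigma$ of $\PGSO_8$ with $\Sigma^\flat$ a constituent of the pullback $f_1^*\Sigma$ along the isogeny $f_1\colon\SO_8\to\PGSO_8$ -- this is functoriality along $f_1$, in the direction converse to Proposition~\ref{P:isogeny}, and is available precisely because the dual family lifts. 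Now $\theta$ is an automorphism of $\PGSO_8$ defined over $k$, so $\Sigma^\theta:=\Sigma\circ\theta^{-1}$ is again automorphic on $\PGSO_8$; pulling it back along $f_1$ and then applying the standard functorial lift from $\SO_8$ to $\GL_8$ (again \cite{A}) produces an isobaric automorphic representation $\Pi$ of $\GL_8$ whose Hecke--Satake family is $({\rm std}\circ\rho_1\circ\theta\circ\iota)$ applied to that of the source. By the branching computation of the first paragraph this family coincides with that of the Spin lift of $\pi$ in part~(i), and with that of the Rankin--Selberg lift of $\sigma_1\boxtimes\sigma_2$ in part~(ii). The verification that these families agree at almost all places is a direct chase of Satake parameters through $\iota$, $\theta$, the $\rho_j$, and the Satake-compatible standard lift.

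\emph{The main obstacle.} The delicate step is the descent of $\Sigma^\flat$ from $\SO_8$ down to the \emph{adjoint} group $\PGSO_8$, on which alone the order-$3$ automorphism $\theta$ is algebraic: one has to combine (a)~the descent of an orthogonal isobaric representation of $\GL_8$ to the split $\SO_8$, which rests on Arthur's stable trace formula machinery (or the Ginzburg--Rallis--Soudry descent), with (b)~the further descent to $\PGSO_8$, which works only because the Hecke--Satake family has been arranged to lift to $\Spin_8(\C)$ -- a feature coming, in part~(i), from $\pi$ being a representation of $\PGSp_6$ rather than merely of $\Sp_6$, and, in part~(ii), from $\sigma_1$ and $\sigma_2$ being of symplectic type. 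Everything else -- the construction of $\Pi^\flat$, the triality twist, and the final descent to $\GL_8$ -- is routine once the input functorial lifts (\cite{A}, \cite{CKPSS}, Kim's exterior square, the Gelbart--Jacquet symmetric square) are taken as given.
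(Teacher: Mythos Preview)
Your overall strategy---land on $\PGSO_8$, exploit that the triality automorphism $\theta$ acts there, and then transfer back to $\GL_8$---is exactly the paper's. The divergence is in how you reach $\PGSO_8$, and that is where the argument breaks.

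The paper never passes through $\SO_8$ on the way \emph{up}. For (i) it replaces $\pi$ by a nearly equivalent globally generic $\pi'$ (using Xu's global packets, Lemma~\ref{L:generic}) and applies the \emph{similitude} theta lift $\PGSp_6 \to \PGSO_8$, whose nonvanishing is supplied by Ginzburg--Rallis--Soudry; for (ii) it invokes Xu's endoscopic lifting $\PGL_2 \times \PGSp_4 \to \PGSO_8$ directly. In both cases one already has an automorphic representation of the adjoint group, so one can simply pull back along the non-standard isogeny $f_2:\SO_8\to\PGSO_8$ (equivalently, apply $\theta$ and then $f_1^*$) and transfer to $\GL_8$.

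Your route instead builds $\Pi^\flat$ on $\GL_8$, descends to $\Sigma^\flat$ on $\SO_8$, and then asserts a further descent to $\PGSO_8$. That last step is the gap. You justify it by the lifting of the Hecke--Satake family to $\Spin_8(\C)$, but this condition is vacuous: the isogeny $\Spin_8(\C)\to\SO_8(\C)$ is surjective, so \emph{every} semisimple conjugacy class in $\SO_8(\C)$ lifts (non-uniquely), regardless of whether $\Sigma^\flat$ comes from $\PGSO_8$. What you actually need is that $\Sigma^\flat$ has trivial central character on $Z(\SO_8)=\mu_2$ at \emph{every} place (the unramified places are automatic, but the ramified and archimedean ones are not), and then that $\Sigma^\flat$ extends across the strict inclusion $\SO_8(\A)/\mu_2(\A)\hookrightarrow\PGSO_8(\A)$, whose cokernel is controlled by the nontrivial groups $H^1(k_v,\mu_2)$. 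Neither point is addressed; the ``converse to Proposition~\ref{P:isogeny}'' you invoke is itself an instance of functoriality that is not available off the shelf. Filling this in would amount to extending $\Sigma^\flat$ to $\GSO_8$ and controlling the central character---precisely the similitude-group input (theta lifting to $\GSO_8$, or Xu's packets) that the paper uses to bypass the problem altogether.
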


We refer the reader to Theorem \ref{thm:weakspinlift}, Theorem \ref{T:strong}  and  Theorem \ref{T:RS} for more details about the Spin lifting and Rankin-Selberg lifting of the theorem.  In  \S \ref{S:GSp(6)}, we remove the hypothesis of ``trivial central character" in the above theorem (see Theorems \ref{T:GSpin lifting} and \ref{T:RS2}).
In particular, we demonstrate the Spin lifting from $\GSp_6$ to $\GL_8$ induced by $\GSpin_7(\C) \rightarrow \GL_8(\C)$. We also give an application to a lifting from selfdual cuspidal automorphic representations of $\PGL_7$ to $\SL_8$. In \S \ref{S:Siegel}, we refine Theorems~\ref{thm:weakspinlift} and~\ref{T:strong} further 
in the setting of automorphic representations of ${\rm PGSp}_6$ over $\Q$ 
generated by holomorphic Siegel modular forms for the full Siegel modular group ${\rm Sp}_6(\Z)$. 
In this arithmetic situation, we show that these theorems also hold 
if we do not impose the genericity (of A-parameters) assumption. These improvements are possible by arguments using  Galois representations. 
As an example, we prove the following result (see Theorem~\ref{thm:Lspinsiegel}):
\begin{cor} Assume $\pi$ is a cuspidal automorphic representation of ${\rm PGSp}_6$ over $\Q$
generated by a Siegel modular cusp form for ${\rm Sp}_6(\Z)$ with weights $k_1 \geq k_2 \geq k_3 \geq 4$.
Then:
\begin{itemize}
\item[(i)] ${\rm L}(s,\pi,{\rm spin})$ has a meromorphic continuation to all of $\C$, 
with at most a simple pole at $s=0$ and $1$, and no other poles. It satisfies ${\rm L}(s,\pi,{\rm spin})={\rm L}(1-s,\pi,{\rm spin})$. \ps
\item[(ii)] Moreover, ${\rm L}(s,\pi,{\rm spin})$  has a pole at $s=1$ if, and only if, $\pi$ is of type ${\rm G}_2$, in which case  ${\rm L}(s,\pi,{\rm spin})=\zeta(s) \cdot  {\rm L}(s,\pi,{\rm std})$. 
\end{itemize}
\end{cor}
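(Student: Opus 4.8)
The plan is to realise $L(s,\pi,\mathrm{spin})$ as a standard isobaric $L$-function on $\mathrm{GL}_8$ and then to read off all the assertions from the known analytic behaviour of such $L$-functions, the poles being controlled by the representation theory of the spin representation of $\Spin_7(\C)$. \emph{Step 1 (the spin lift).} First I would apply the Spin lifting in the Siegel-modular setting --- the refinement of Theorems~\ref{thm:weakspinlift} and~\ref{T:strong} carried out in \S\ref{S:Siegel}, where the genericity hypothesis on the $A$-parameter is removed by means of the eight-dimensional Galois representation attached to $\pi$ --- to produce an isobaric automorphic representation $\Pi = \Pi_1 \boxplus \cdots \boxplus \Pi_r$ of $\mathrm{GL}_8$ over $\Q$, unramified at every finite place, with archimedean component prescribed by the weights $(k_1,k_2,k_3)$, and with $L(s,\pi,\mathrm{spin}) = L(s,\Pi) = \prod_i L(s,\Pi_i)$ as completed $L$-functions. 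Here the equality of the archimedean Euler factors is forced by local--global compatibility of the spin Galois representation, and the global identification of $\Pi$ (and of $\phi_\Pi$ with $\mathrm{spin}\circ\psi$, where $\psi$ is the $A$-parameter of $\pi$) by strong multiplicity one for $\mathrm{GL}_8$.

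\emph{Step 2 (continuation, functional equation, location of poles).} Since $\pi$ has trivial central character and the spin representation $\mathrm{spin}\colon\Spin_7(\C)\to\GL_8(\C)$ is orthogonal --- it factors through $\SO_8(\C)$, one of the avatars of triality recalled in \S 2 --- the representation $\Pi$, and each $\Pi_i$, is self-dual of orthogonal type. A self-dual cuspidal automorphic representation of $\mathrm{GL}_n$ of conductor $1$ and orthogonal type has global root number $+1$; hence $L(s,\pi,\mathrm{spin})$ admits meromorphic continuation to $\C$ and satisfies $L(s,\pi,\mathrm{spin}) = L(1-s,\pi,\mathrm{spin})$. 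By the theory of Rankin--Selberg integrals on $\mathrm{GL}_n$ (Jacquet--Shalika), the completed $L$-function $L(s,\Pi_i)$ is entire unless $\Pi_i$ is a Hecke character, and the only Hecke character of $\Q$ that is unramified everywhere and self-dual is the trivial character $\mathbf 1$; thus $L(s,\pi,\mathrm{spin})$ is holomorphic on $\C\setminus\{0,1\}$, and its order of pole at $s=1$ --- equivalently, by the functional equation, at $s=0$ --- equals the multiplicity $m$ of $\mathbf 1$ among the $\Pi_i$, i.e.\ the multiplicity of the trivial representation in $\mathrm{spin}\circ\psi$.

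\emph{Step 3 (the pole and the $G_2$ dichotomy).} It remains to show that $m\le 1$, with $m=1$ precisely when $\pi$ is of type $G_2$. Because $\pi$ generates a holomorphic discrete series, its archimedean component is tempered, so $\psi$ has trivial Arthur $\mathrm{SL}_2$ and $\mathrm{spin}\circ\psi$ is semisimple; thus $m = \langle \mathbf 1,\ \mathrm{spin}\circ\psi\rangle$. I would then use the structure of the spin representation of $\Spin_7$: the stabiliser of an anisotropic vector for the invariant quadratic form is the exceptional group $G_2$, with $\mathrm{spin}|_{G_2}\cong \mathbf 1\oplus V_7$, where $V_7$ is the irreducible $7$-dimensional representation. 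So $m\ge 1$ forces the image of $\psi$ into a subgroup of $\Spin_7(\C)$ fixing a nonzero spin vector; arguing that in the level-one, holomorphic-discrete-series situation the only admissible possibility is that this image lies in $G_2(\C)$ with $\mathrm{spin}\circ\psi\cong \mathbf 1\oplus(\mathrm{std}_7\circ\psi)$ --- which is exactly what it means for $\pi$ to be of type $G_2$ --- one gets $L(s,\pi,\mathrm{spin}) = \zeta(s)\cdot L(s,\pi,\mathrm{std})$. Since $\mathrm{std}_7\circ\psi$ then has no trivial constituent ($V_7$ being $G_2$-irreducible), $L(s,\pi,\mathrm{std})$ is holomorphic and non-zero at $s=1$, so the pole of $L(s,\pi,\mathrm{spin})$ at $s=1$ is simple and present; and if $m=0$ there is no pole. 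This yields (iii) together with the ``at most simple pole'' clause of (i).

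\emph{Main obstacle.} Two points are delicate. The first is making Step 1 precise --- the exact archimedean factor and the identity $\phi_\Pi=\mathrm{spin}\circ\psi$ --- when $\pi$ does \emph{not} have a generic $A$-parameter; this is the substance of \S\ref{S:Siegel} and rests on the construction and local--global compatibility of the spin Galois representation. The second, which I expect to be the real work, is the final assertion of Step 3: excluding the ``isotropic-vector'' stabilisers and all multiplicity $\ge 2$ configurations (which would force the image of $\psi$ into a proper reductive subgroup of $G_2(\C)$, e.g.\ an $\SL_3(\C)$ with $\mathrm{spin}|_{\SL_3}\cong\mathbf 1^{\oplus 2}\oplus 3\oplus\bar 3$, or into a Levi of a proper parabolic of $\Spin_7$). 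I would settle this by combining the classification of everywhere-unramified cuspidal representations of $\mathrm{PGSp}_6$ possessing a holomorphic discrete series component of regular weight (in the spirit of \cite{CL} and of Arthur's classification \cite{A}) with the purity and Hodge--Tate regularity of the associated Galois representations, which pin down the possible $A$-parameters and thereby forbid the degenerate cases.
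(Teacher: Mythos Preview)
Your overall strategy---realise $L(s,\pi,\mathrm{spin})$ as the standard $L$-function of an isobaric representation on $\GL_8$ and then invoke Godement--Jacquet---is exactly the paper's approach. However, Step~3 contains a genuine error: the implication ``$\pi_\infty$ is a holomorphic discrete series, hence tempered, so the global $A$-parameter $\psi$ has trivial Arthur $\SL_2$'' is false. Temperedness of the archimedean component does \emph{not} force the global $A$-parameter to be generic; indeed the paper explicitly exhibits a non-generic case $\Psi(\pi,\mathrm{std})=\pi_1\boxtimes S_2\,\boxplus\,\mathrm{Sym}^2\pi_3$ (equation~\eqref{eq:nontempsiegel}), for which the spin lift has Langlands form $(\pi_1\boxtimes\pi_3)\,\boxplus\,\pi_3|\cdot|^{1/2}\,\boxplus\,\pi_3|\cdot|^{-1/2}$. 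In that case the isobaric summands are \emph{not} individually self-dual (the last two are swapped by duality), so your Step~2 claim that ``each $\Pi_i$ is self-dual of orthogonal type'' also fails, and the root-number computation requires the separate treatment the paper gives (it checks $\epsilon(\pi_1\boxtimes\pi_3,\tfrac12)\cdot\epsilon(\pi_3,\tfrac12)^2=1$ by hand).

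Separately, your mechanism for bounding the multiplicity $m\le 1$---classifying reductive subgroups of $\Spin_7(\C)$ with spin-invariants and ruling out $\SL_3$, Levi subgroups, etc.---is far harder than necessary, as you yourself flag. The paper's argument is a one-line archimedean check: by Clozel's purity lemma (in the generic case) the Langlands parameter of $\Psi(\pi,\mathrm{spin})_\infty$ is $\bigoplus_{i=1}^4 I_{w_i}$ with $I_w=\mathrm{Ind}_{W_\C}^{W_\R}(z\mapsto(z/|z|)^{2w})$, and since $w_1>w_2>w_3>w_4\ge 0$ the trivial character of $W_\R$ occurs at most once in this parameter (only inside $I_0$ when $w_4=0$). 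Hence $\mathbf{1}$ cannot occur twice among the $\Pi_i$. The non-generic case is handled by the same archimedean inspection. The link to type $\mathrm{G}_2$ is then Proposition~\ref{prop:critG2}, not a subgroup classification.
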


A weaker statement had been proved by Pollack in \cite[Thm. 1.2]{pollack}, assuming that the associated Siegel modular form has a nonzero Fourier coefficient at the maximal order of a definite quaternion algebra.
For ${\rm PGSp}_6$ over a general number field, note that Theorem~\ref{T:intro1} shows that the partial spinor $L$-function of any cuspidal $\pi$ with a generic standard A-parameter is a product of Godement-Jacquet $L$-functions, providing a rather different approach to the constructions in \cite{BG,Vo}.

  \vskip 10pt

Finally, we remark that  Theorem \ref{T:intro1}  depends on results of Arthur  \cite{A}, B. Xu \cite{X1, X2} and Moeglin-Waldspurger \cite{MW1,MW2} as stated, through the use of the stable twisted trace formula and the notions of A-parameters and A-packets. As such, they are currently subject to  the hypothesis of the twisted weighted fundamental lemma. However, one can obtain slightly weaker versions of these results  unconditionally. For example, in Theorem \ref{T:intro1}(i), if we had assumed that $\pi$ is globally generic, there will be no need to invoke \cite{A}.   But since the various hypotheses present in the aforementioned references should soon be fully verified (see for example \cite{AGIKMS}), we prefer not to split hairs on such issues, so as  to preserve our own  {\it Sense and Sensibility}.
 \vskip 10pt

\noindent{\bf Acknowledgments:}  The authors thank Bin Xu for several helpful conversations about his work, and Dipendra Prasad for his comments on the first version of this paper. The first author also thanks Olivier Ta\"ibi for useful discussions. This project was initiated when the authors participated in the Oberwolfach workshop ``Harmonic Analysis and Trace Formula'' during May 22-26, 2017; we thank the organizers Werner Mueller, Sug Woo Shin,  Birgit Speh and Nicholas Templier for their invitation, and the Mathematisches Forschungsinstitut Oberwolfach  for the excellent working conditions. The main work for this project was completed when the second author visited the IHES during May 15-July 12, 2017; the second author thanks the IHES for its support, as well as providing excellent academic atmosphere and living conditions.  
The first author is partially supported by the C.N.R.S. and 
the project ANR-19-CE40-0015-02 (COLOSS). The second author is  supported by a Tan Chin Tuan Centennial Professorship at NUS. 
\vskip 10pt

\vskip 10pt
\section{\bf Triality} \label{S:triality}
In this section, we fix our notations for the algebraic groups used in the article and describe some background material on the triality automorphism $\theta$ of $D_4$. As we mentioned in the introduction,
the automorphism $\theta$ can only be realised on the simply-connected group $\Spin_8$ or the adjoint group $\PGSO_8$. 
In the process, we shall give a construction of $\Spin_8$ that makes triality especially transparent, following the treatment of  \cite{KMRT}. 
\vskip 5pt

\subsection{\bf Spin groups and representations} \label{SS:spingp}
We begin by reviewing the classical construction of Spin groups and their spin representations.
Let  $F$ be an arbitrary field with ${\rm char}(F) \neq 2$. 
 Recall that for any non-degenerate quadratic space $(V,Q)$ over $F$, 
we have the naturally associated special orthogonal and proper similitude groups $\SO(V) \subset \GSO(V)$.
When we want to emphasize the quadratic form $Q$, we also write $\SO(V,Q)$ for $\SO(V)$ for example.
Via the theory of Clifford algebras, one may construct the associated Spin and general Spin groups $\Spin(V) \subset \GSpin(V)$: these are classically defined as 
subgroups of the invertible elements  of the even Clifford algebra ${\rm Cl}^0(V)$ of $V$. 
One has a natural projection 
\[ \begin{CD} \rho : \GSpin(V) @>>> \SO(V) \end{CD} \]
often called the {\it standard} morphism. 
The resulting representation of $\GSpin(V)$ on $V$ is its {\it standard representation} 
\[  {\rm std}: \GSpin(V) \longrightarrow \SO(V) \longrightarrow \GL(V). \]
The restrictions to $\Spin(V) \subset \GSpin(V)$ of $\rho$ and ${\rm std}$ will still be denoted by $\rho$ and ${\rm std}$. 
   
\vskip 5pt

 

In the case $V$ is split with $\dim V = r$, we shall often denote the groups introduced above by $\SO_r \subset \GSO_r$ and $\Spin_r \subset \GSpin_r$. Assume $V$ is split, or more generally that $V$ has trivial discriminant and Clifford invariant (see~\cite[\S 35A]{KMRT}). If $\dim V =2n+1$ is odd, then ${\rm Cl}^0(V)$ has a spinor module $S_V$, unique up to isomorphism, which defines a spin representation 
 \[   {\rm spin}: \GSpin(V)  \longrightarrow \GL(S_V) \]
 of dimension $2^n$. On the other hand, if  $\dim V = 2n$ is even,  then any choice of spinor modules $S_{V,\pm}$ for ${\rm Cl}^0(V)$ 
defines two half-spin representations 
\[ {\rm spin}_{\pm}: {\rm GSpin}(V) \rightarrow {\rm GL}(S_{V,\pm}),\]
each being of dimension $2^{n-1}$ over $F$.  These half-spin representations are known to be 
orthogonal when $2n \equiv 0 \bmod 8$. Of course, we shall also denote by ${\rm spin}$ or ${\rm spin}^{\pm}$ the restriction of these representations to ${\rm Spin}(V)$.
\vskip 5pt

Suppose that $V = V_1 \oplus V_2$ is an orthogonal decomposition of $V$ into the sum of two non-degenerate quadratic spaces. Then one has a natural  commutative diagram
\[  \begin{CD} 
\GSpin(V_1) \times \GSpin(V_2) @>\iota>> \GSpin(V) \\
@V\rho_1\times \rho_2VV  @VV{\rho}V \\
\SO(V_1) \times \SO(V_2) @>\iota_{\flat}>> \SO(V)
\end{CD} \]
so that
\[  {\rm std}_V \circ  \iota \simeq  {\rm std}_{V_1} \oplus {\rm std}_{V_2}  \]
as representations of $\GSpin(V_1) \times \GSpin(V_2)$. On the other hand, one may consider the pullback of the spin representations under $\iota$. For this, we have:
\begin{itemize}
\item  when  $\dim V$ is even but  each $\dim V_i$ is odd:
\[  S_{V, \pm}  \circ \iota  \simeq  S_{V_1} \boxtimes S_{V_2} \] 
as representations of $\GSpin(V_1) \times \GSpin(V_2)$;

\item  when $\dim V$ and $\dim V_i$ are all even:
\[    S_{V, +}  \circ \iota  \simeq  S_{V_1, +} \boxtimes S_{V_2, +} \oplus S_{V_1, -} \boxtimes S_{V_2, -}  \] 
and
\[   S_{V, -}  \circ \iota  \simeq   S_{V_1, +} \boxtimes S_{V_2, -} \oplus S_{V_1, -} \boxtimes S_{V_2, +}  \] 
up to relabelling the half-spin modules;

\item when $\dim V$ and $\dim V_1$ are odd:
\[   S_V  \simeq  S_{V_1} \boxtimes S_{V_2,+} \oplus S_{V_1} \boxtimes S_{V_2,-}. \]
\end{itemize}

\vskip 5pt

 We now specialize to the case when  $\dim V = 8$. Observe that in this case, the standard and half-spin representations are all orthogonal of dimension $8$. As we mentioned in the introduction, these three inequivalent irreducible representations of $\Spin(V)$ are permuted transitively by the triality (outer) automorphism. However, the above classical construction does not readily reveal the triality automorphism. This is not surprising since the construction applies to quadratic spaces of any rank. In the rest of this section, we shall explain an alternative  construction which applies only when $\dim V = 8$ and  which visibly exhibits the triality automorphism. This alternative construction makes use of more structures on $V$ than that of a quadratic space, namely the structure of an octonion algebra.
 
\vskip 5pt

\subsection{\bf Octonion algebras.}

Let $\mathbb{O}$ be an octonion algebra over $F$. Thus, $\mathbb{O}$ is an $8$-dimensional (non-commutative and non-associative) composition algebra with unit, see \cite[\S 33]{KMRT}. In particular, it is equipped with an $F$-linear involution $x \mapsto \bar{x}$ such that $N(x) = x \cdot \bar{x}$ is the quadratic form permitting composition: 
\[  N(x \cdot y) = N(x) \cdot N(y) \quad \text{ for $x,y \in \mathbb{O}$.} \]
We denote by $b_N$ the symmetric bilinear form associated to $N$ and $1$ the unit of $\mathbb{O}$.
The automorphism group of $\mathbb{O}$ is an exceptional simple algebraic group over $F$ of type ${\rm G}_2$; it is a subgroup of 
$\SO(\mathbb{O}, N)$ that we denote by ${\rm G}_2^{\mathbb{O}}$. It is split over $F$ if $\mathbb{O}$ is,
in which case we often simply denote ${\rm G}_2^{\mathbb{O}}$ by ${\rm G}_2$. The action of ${\rm G}_2^\mathbb{O}$ on $\mathbb{O}$ fixes $1$, and hence its orthogonal complement with respect to $b_N$, which is the 7-dimensional standard  representation (note ${\rm char}(F) \neq 2$). 

\vskip 5pt

\subsection{\bf Symmetric Composition Algebras.}
For the purpose of constructing the Spin group with its triality automorphism, it will be more convenient to work with a modified multiplication law on $\mathbb{O}$. We set
\[  x \ast y := \bar{x} \cdot \bar{y} \quad \text{for $x,y \in \mathbb{O}$.} \]
Then $(\mathbb{O}, \ast, N)$ is called a {\it para-octonion} algebra and satisfies the axioms of a {\it symmetric composition algebra} \cite[Pg. 463-464]{KMRT}: it is  a non-unital composition algebra with respect to $N$ which satisfies:
\[   b_N(x \ast y, z)  = b_N(x, y \ast z) \quad \text{  for all $x,y,z \in \mathbb{O}$.}  \]
  However, automorphisms of $\mathbb{O}$ give rise to automorphisms of $(\mathbb{O}, \ast)$ and vice versa, so that $\Aut(\mathbb{O}, \ast) = {\rm G}_2^\mathbb{O}$ \cite[Cor. 34.6]{KMRT}.  
 
%
%
 
\vskip 5pt

\subsection{\bf The Spin group in dimension $8$.}   \label{SS:VQ}


Assume $(V, \ast, Q)$ is a symmetric composition algebra of dimension $8$. It follows from \cite[\S 35A]{KMRT} 
that the even Clifford algebra of $(V,Q)$ is naturally isomorphic to ${\rm End}(V) \times {\rm End}(V)$. In particular, $(V,Q)$ has trivial discriminant and Clifford invariant, and we may take $S_{V,\pm}=V$.
Better, let us define
\[  \Spin(V,\ast, Q) = \{ (g_1, g_2, g_3) \in \SO(V,Q)^3:  g_1(x \ast y) = g_2(x) \ast g_3(y) \text{ for $x,y \in V$} \}. \]
Then $\Spin(V,\ast, Q)$ is an algebraic group over $F$ isomorphic to ${\rm Spin}(V,Q)$ by \cite[\S 35C]{KMRT}.
A charm of this rather simple definition is that not only does it not involve anymore the Clifford algebra, but also that it renders triality entirely natural and transparent.
Indeed, by construction the group $\Spin(V, \ast, Q)$ is equipped with the following extra structures:
\vskip 5pt

\begin{itemize}
\item the three projections to $\SO(V, Q)$ gives 3 homomorphisms
\[  \rho_j:  \Spin(V,\ast, Q) \longrightarrow \SO(V,Q), \]
with ${\rm Ker}(\rho_j) \simeq \mu_2 \subset Z_{\Spin(V, \ast, Q)}$. 
\vskip 5pt

\item the cyclic permutation of the three coordinates in $\SO(V,Q)^3$ preserves the subgroup  $\Spin(V, \ast, Q)$ and thus defines an order $3$ automorphism 
\[  \theta:  \Spin(V, \ast, Q) \longrightarrow \Spin(V, \ast, Q). \]
This is a triality automorphism which permutes the 3 maps $\rho_j$ cyclically. 
The fixed group of $\theta$ is, by definition, the subgroup $\Aut(V,\ast, Q) \subset \SO(V,Q)$.  

\item Because the automorphism $\theta$ necessarily preserves the center $Z_{\Spin(V,\ast, Q)}$, it descends to an automorphism of the adjoint group $\PGSO(V,Q)$, still denoted by $\theta$.  The projection from the three $\SO(V,Q)$'s thus gives 3 maps
\[  f_j:  \SO(V,Q) \longrightarrow \PGSO(V,Q), \]
which are permuted by $\theta$.
\end{itemize}
\vskip 10pt

The maps $f_j$ and $\rho_j$ alluded to in the introduction are those defined above. By  \cite[Prop. 35.7]{KMRT}, we may identify $\Spin(V, \ast, Q)$ with ${\rm Spin}(V,Q)$ in such a way that we have $\rho_1=\rho$, and that $\rho_2$ and $\rho_3$ induce the two half-spin representations of ${\rm Spin}(V,Q)$. 
This (breaking of symmetry!) being done, \cite[Prop. 35.1]{KMRT} shows that the two isogenies $\rho_2$ and $\rho_3$ naturally extend to $\GSpin(V,Q) \supset \Spin(V,Q)$ and give rise to two isogenies 
\[  \widetilde{\rho_j}:  \GSpin(V,Q) \longrightarrow \GSO(V,Q), \,\,\,j=2,3\]
inducing as well the two half-spin representations of $\GSpin(V,Q)$ on $S_{V,\pm}=V$.

%
%
%

\vskip 10pt

\section{\bf Spin Lifting for $\PGSp_6$} \label{S:PGSp(6)}
 We shall now investigate the consequences of triality for Langlands functoriality. 
 In particular, in this section, we shall establish the so-called Spin lifting from $\PGSp_6$ to $\GL_8$. 
\vskip 5pt

Let $k$ be a number field with ring of ad\`eles $\A$. For a reductive group $G$ over $k$, set $[G] = G(k) \backslash G(\A)$ and let $\mathcal{A}(G)$ denote the space of automorphic forms for $G$. The subspace of cusp forms is denoted by $\mathcal{A}_{cusp}(G)$.
\vskip 5pt

\subsection{\bf  Simple functorial lifting}
We have introduced the notion of weak Langlands functorial lifting in \S \ref{SS:Lfunctorial}. For the sake of convenient reference, we document an instance of functorial lifting already alluded to in the introduction and which we will appeal to repeatedly later on.
\vskip 5pt

With $G$ and $H$ split connected reductive algebraic groups over $k$, $Z$ a commutative algebraic group of multiplicative type, and $S$ a torus over $k$, we shall consider an exact sequence of $k$-groups of the form:
\begin{equation}
\label{cd:defphiZQ}
1 \longrightarrow Z \longrightarrow G \overset{\phi}{\longrightarrow} H \longrightarrow S \longrightarrow 1.
\end{equation}
 \vskip 5pt
\noindent In particular, $Z$ is isomorphic to a central subgroup of $G$, and $S$ is a co-central quotient of $H$ (and is necessarily split). 
Here are some examples we have in mind:
\vskip 5pt

\begin{itemize}
\item  $G = H$, $Z$ and $S$ are trivial, and $\phi$ is an automorphism of $G$ (not necessarily inner);
\vskip 5pt

\item $Z$ is a finite central subgroup and $S$ is trivial, so that $\phi$ is an isogeny;
\vskip 5pt

\item $Z$ is trivial and $S=\mathbb{G}_m$. 
\end{itemize}
\vskip 5pt

The above exact sequence induces a corresponding map of Langlands dual groups:
\[   \phi^{\vee}:  H^{\vee}  \longrightarrow G^{\vee}. \] 

 The map $\phi^{\vee}$  gives rise to a simple instance of weak Langlands functoriality from $H$ to $G$, as the following folklore proposition documents. 

\vskip 5pt

\begin{prop} \label{P:isogeny}
Let $\phi : G \longrightarrow H$ be as in~\eqref{cd:defphiZQ}.
\begin{itemize}
\item[(i)] Assume that $v$ is a finite place of $k$ and $\pi_v$ is an unramified representation of $H(k_v)$. 
Then any constituent $\sigma_v$ of ${\pi_v}|_{G(k_v)}$ is unramified, and we have 
$c(\sigma_v) = \phi^\vee (c(\pi_v))$.\par \smallskip
\item[(ii)] If $\pi$ is an automorphic representation of $H$, and if $\sigma$ is any irreducible automorphic constituent 
of $\pi|_{G(\mathbb{A})}$, then $\sigma$ is a weak functorial lifting of $\pi$ with respect to $\phi^\vee$.
\end{itemize}
\end{prop}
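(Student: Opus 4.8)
The plan is to deduce both parts from the compatibility of the Satake isomorphism with the map $\phi$, treating the three bullet examples (automorphism, isogeny, $\mathbb{G}_m$-extension) essentially uniformly since~\eqref{cd:defphiZQ} is built from these pieces. First I would recall that for $v$ outside a controlled finite set, $H(k_v)$ has a hyperspecial maximal compact subgroup $K_v$, and by choosing compatible integral models one gets a hyperspecial $K'_v \subset G(k_v)$ with $\phi(K'_v) \subset K_v$. If $\pi_v$ is $K_v$-unramified, then the restriction ${\pi_v}|_{G(k_v)}$ is a finite direct sum of irreducible representations (by admissibility and the fact that $G(k_v) \to H(k_v)$ has image of finite index times the image of a torus quotient), and each constituent $\sigma_v$ contains a nonzero $K'_v$-fixed vector, hence is unramified. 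For the Satake parameter, the point is that the unramified Hecke algebra $\mathcal{H}(G(k_v), K'_v)$ and $\mathcal{H}(H(k_v), K_v)$ are identified via Satake with the representation rings $R(G^\vee)$, $R(H^\vee)$ (or rather their appropriate localizations), and the algebra map induced by $\phi$ on compactly supported functions corresponds under Satake to the restriction map $R(G^\vee) \to R(H^\vee)$ dual to $\phi^\vee: H^\vee \to G^\vee$. Tracing through, the eigenvalue of $\sigma_v$ on $\mathcal{H}(G(k_v),K'_v)$ is the composite of $\phi^\vee$ with the eigenvalue of $\pi_v$, which is precisely the assertion $c(\sigma_v) = \phi^\vee(c(\pi_v))$.

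For part~(ii), once~(i) is in hand the argument is formal: if $\sigma \subset \pi|_{G(\mathbb{A})}$ is an irreducible automorphic constituent, write $\sigma = \otimes_v \sigma_v$ and $\pi = \otimes_v \pi_v$; for almost all $v$ the local component $\sigma_v$ is, by the multiplicativity of the restriction, a constituent of ${\pi_v}|_{G(k_v)}$, so~(i) applies and gives $c(\sigma_v) = \phi^\vee(c(\pi_v))$ at all but finitely many $v$. This says exactly that the Hecke--Satake family of $\sigma$ equals $\phi^\vee(c(\pi))$ up to a finite set, i.e. $\sigma$ is a weak functorial lifting of $\pi$ along $\phi^\vee$. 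The only thing to check is that $\sigma$ is genuinely automorphic, which it is by construction: an irreducible constituent of the restriction to $G(\mathbb{A})$ of an automorphic form on $[H]$ restricts to an automorphic form on $[G]$ since $G(k)\backslash G(\mathbb{A}) \to H(k)\backslash H(\mathbb{A})$ has closed image with the restricted function still of moderate growth and $Z(\mathbb{A})$-finite.

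The step I expect to be the main obstacle is the bookkeeping at the archimedean and ramified places together with the subtlety that $\phi$ need not be surjective (its cokernel is the split torus $S$) nor injective (its kernel is $Z$). Concretely, one must argue that restriction along $G(\mathbb{A}) \to H(\mathbb{A})$ does not "lose" automorphy and that the decomposition into irreducibles is well-behaved: this is where one invokes that $S$ is split (so $H(\mathbb{A}) \to S(\mathbb{A})$ has a section away from a finite set, controlling the image of $G(\mathbb{A})$) and that $Z$ is of multiplicative type (so central characters are tractable). A clean way to organize this is to factor $\phi$ as an isogeny onto its image followed by an open immersion into $H$ up to the torus $S$, handle each factor separately, and cite the standard fact (e.g. as in the treatment of such simple lifts in \cite{CL}) that pullback of automorphic forms along a central isogeny or a $\mathbb{G}_m$-extension preserves automorphy and cuspidality. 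Since the statement is explicitly labelled a folklore proposition, I would keep this part brief and refer to \cite[\S 35A]{KMRT} and \cite{CL} for the Satake compatibility and the automorphy of pullbacks respectively.
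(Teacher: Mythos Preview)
Your overall strategy matches the paper's: produce compatible hyperspecials $\phi(K'_v)\subset K_v$, argue that constituents of $\pi_v|_{G(k_v)}$ are unramified, and then read off the Satake parameter. But one step is wrong as stated. You assert that ``each constituent $\sigma_v$ contains a nonzero $K'_v$-fixed vector.'' In fact $\pi_v^{K'_v}$ is still only one-dimensional: using $H(k_v)=T(k_v)\,\phi(G(k_v))$ (Hilbert~90, since $T$ and $T'$ are split), the map $f\mapsto f\circ\phi$ embeds $(\mathrm{Ind}_{B(k_v)}^{H(k_v)}\chi)|_{G(k_v)}$ into the single unramified principal series $\mathrm{Ind}_{B'(k_v)}^{G(k_v)}(\chi\circ\phi)$, whose $K'_v$-fixed space has dimension~$1$. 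So when $\pi_v|_{G(k_v)}$ has several constituents (this genuinely happens, e.g.\ for $G=\mathrm{SL}_2$, $H=\mathrm{PGL}_2$), only one of them is $K'_v$-spherical. The paper supplies the missing idea: since $\phi$ induces an isomorphism $G_{\mathrm{ad}}\simeq H_{\mathrm{ad}}$, the constituents are transitively permuted by $G_{\mathrm{ad}}(k_v)$, hence each is spherical for some $G_{\mathrm{ad}}(k_v)$-conjugate of $K'_v$, which is again hyperspecial by (HSb). Without this conjugacy step, the claim that \emph{every} constituent is unramified is unjustified.

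Two smaller remarks. First, once you accept that different constituents are spherical for different hyperspecials, the Hecke-algebra compatibility argument for $c(\sigma_v)=\phi^\vee(c(\pi_v))$ becomes awkward to run uniformly; the paper instead observes that every constituent sits in the same $\mathrm{Ind}_{B'(k_v)}^{G(k_v)}(\chi\circ\phi)$, which gives the Satake relation for all of them at once and bypasses the issue. Second, your closing references are misplaced: \cite{KMRT} is the Book of Involutions and has nothing to say about the Satake isomorphism, and the extended discussion of automorphy of pullbacks is not needed---the paper simply notes that (i) trivially implies (ii).
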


Before we give the proof, recall that if $I$ is a split connected reductive group over a non-Archimedean local field $F$ (of characteristic $0$ in what follows), and if $\tau$ is an irreducible smooth representation of $I(F)$, we say that $\tau$ is {\it unramified} if it has nonzero invariant vectors under some compact open subgroup $K \subset I(F)$ 
which is {\it hyperspecial} in the sense of Tits \cite[\S 1.10]{tits}. When we want to specify $K$, as we shall do in the proof below, we rather say that $\tau$ is {\it $K$-unramified}. Hyperspecial (compact) subgroups exist as $I$ is split \cite[\S 1.10.2]{tits}. 
We shall use below their following properties, in which $\mathcal{O}$ denotes the valuation ring of $F$ :
\vskip 5pt
 (HSa) the hyperspecial subgroups of $I(F)$ are exactly the subgroups of the form $f(\mathcal{I}(\mathcal{O}))$ where $(\mathcal{I},\,f : \mathcal{I} \times_\mathcal{O} F \overset{\ssim}{\longrightarrow} I)$ is a reductive $\mathcal{O}$-model of $I$ \cite[\S 3.8.1]{tits}, 
\vskip 5pt
 (HSb)  the natural action $(\varphi,K) \mapsto \varphi(K)$ of ${\rm Aut}_F(I)$ on the subgroups of $I(F)$ preserves the hyperspecial subgroups, and $I_{\rm ad}(F) \subset {\rm Aut}_F(I)$ permutes them transitively \cite[\S 2.5]{tits}. 
\vskip 5pt

\noindent For example, if $I$ is a split torus, then its unique maximal compact subgroup is its unique hyperspecial subgroup by (HSa). The following lemma is presumably classical, but we provide a proof for lack of reference.
\vskip 5pt
\begin{lemma} 
\label{lem:existhyperspecial} Assume $\phi : G \rightarrow H$ is a morphism of split connected reductive groups over the non-Archimedean local field $F$ belonging to an exact sequence as in \eqref{cd:defphiZQ}. Then for any hyperspecial  subgroup $K$ of $H(F)$, there is a hyperspecial subgroup $K'$ of $G(F)$ with $\phi(K') \subset K$. 
\end{lemma}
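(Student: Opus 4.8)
The plan is to reduce to the case of a reductive $\mathcal{O}$-model and use the exactness of the sequence to push models around. First I would note, using (HSa), that the given hyperspecial subgroup $K \subset H(F)$ has the form $K = h(\mathcal{H}(\mathcal{O}))$ for some reductive $\mathcal{O}$-model $(\mathcal{H}, h)$ of $H$. The goal is to produce a reductive $\mathcal{O}$-model $(\mathcal{G}, g)$ of $G$ together with a morphism of $\mathcal{O}$-group schemes $\widetilde{\phi} : \mathcal{G} \to \mathcal{H}$ extending $\phi$ (after identifying generic fibers via $g$ and $h$); then $K' := g(\mathcal{G}(\mathcal{O}))$ is hyperspecial by (HSa), and $\phi(K') = \widetilde{\phi}(\mathcal{G}(\mathcal{O})) \subset \mathcal{H}(\mathcal{O})$, hence $\phi(K') \subset h(\mathcal{H}(\mathcal{O})) = K$ after transporting by $h$. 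So the entire content is the existence of a compatible pair of integral models.

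For this I would split the exact sequence \eqref{cd:defphiZQ} into the two short exact sequences $1 \to Z \to G \to H_0 \to 1$ and $1 \to H_0 \to H \to S \to 1$, where $H_0 = \phi(G) = \ker(H \to S)$. Since $S$ is a split torus, it has a canonical Chevalley model $\mathcal{S}$ over $\mathcal{O}$, and I would first show that the surjection $H \to S$ extends to a smooth surjection $\mathcal{H} \to \mathcal{S}$ of $\mathcal{O}$-schemes with reductive kernel $\mathcal{H}_0$ (a reductive $\mathcal{O}$-model of $H_0$): concretely, the cocharacter lattice splitting $X_*(S) \hookrightarrow X_*(T_H)$ for a maximal split torus $T_H \subset H$ gives, over the Chevalley model $\mathcal{H}$ attached to the based root datum, a sub-split-torus $\mathcal{S}' \subset \mathcal{H}$ mapping isomorphically to $\mathcal{S}$, and one takes $\mathcal{H}_0$ to be the scheme-theoretic kernel — its reductiveness over $\mathcal{O}$ follows from SGA3 since the root datum of $H_0$ is a direct factor piece of that of $H$. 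This realises $K \cap H_0(F)$ as $\mathcal{H}_0(\mathcal{O})$, a hyperspecial subgroup of $H_0(F)$.

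Next, for the isogeny-with-central-kernel part $1 \to Z \to G \xrightarrow{\phi} H_0 \to 1$: $Z$ is of multiplicative type, hence extends to a finite flat multiplicative $\mathcal{O}$-group scheme $\mathcal{Z}$ (a diagonalisable $\mathcal{O}$-group, by taking the Cartier dual of the constant model of the character group $X^*(Z)$, which is automatically unramified as $\mathrm{char}(F) = 0$ and $F$ is non-archimedean — no wild ramification issue for the $\mu_n$ and $\mathbb{G}_m$ factors involved). Then by the structure theory of reductive group schemes over $\mathcal{O}$ (SGA3, Exp. XXII–XXIII, or \cite[\S 3]{tits}), a Chevalley model $\mathcal{G}$ of $G$ maps onto $\mathcal{H}_0$ with kernel $\mathcal{Z}$: a central isogeny of split groups over $F$ is the generic fiber of an isogeny of their Chevalley models, since the combinatorial datum (the inclusion of weight lattices) lives over $\Z$. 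Composing, I get $\widetilde{\phi} : \mathcal{G} \to \mathcal{H}_0 \hookrightarrow \mathcal{H}$ extending $\phi$, and I finish as in the first paragraph.

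The main obstacle — and the step deserving the most care — is the compatibility of the various model-choices: Tits's parametrisation (HSa) allows the given $K$ to come from an \emph{arbitrary} reductive model $\mathcal{H}$, not necessarily the standard Chevalley one, so I must either (a) invoke (HSb) to move, via an element of $H_{\mathrm{ad}}(F)$, from the arbitrary $\mathcal{H}$ to one built from a pinning — but then I need to know such an adjoint element lifts appropriately, or rather just that it suffices to prove the statement for one hyperspecial $K$ and then transport by $H_{\mathrm{ad}}(F) \subset \mathrm{Aut}_F(H)$, noting $\phi$ is equivariant for the adjoint actions since $Z$ is central so $G_{\mathrm{ad}} \cong H_{0,\mathrm{ad}}$ maps to $H_{\mathrm{ad}}$; or (b) argue directly with an arbitrary reductive $\mathcal{O}$-model $\mathcal{H}$ by spreading out $\phi$ over $\mathrm{Spec}\,\mathcal{O}$ after possibly shrinking, which is cleaner but requires checking the spread-out map still hits $\mathcal{H}(\mathcal{O})$ and that the source model can be taken reductive. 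I expect route (a) to be the shortest: prove it for the Chevalley $K_0$, then for general $K = \varphi(K_0)$ with $\varphi \in H_{\mathrm{ad}}(F)$ observe that $\varphi$ lifts to an automorphism of the pair $(G \xrightarrow{\phi} H)$ acting through $G_{\mathrm{ad}}$, so that $\varphi(K_0')$ works.
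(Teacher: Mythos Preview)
Your proposal is correct and shares the paper's key reduction: both use (HSb) together with the isomorphism $G_{\mathrm{ad}} \simeq H_{\mathrm{ad}}$ induced by $\phi$ to reduce to exhibiting \emph{some} pair of hyperspecials $A \subset G(F)$, $B \subset H(F)$ with $\phi(A) \subset B$ --- your route (a) is precisely the paper's first observation. The genuine difference is in how the non-surjectivity of $\phi$ is handled. You factor $\phi$ as $G \twoheadrightarrow H_0 \hookrightarrow H$, construct an integral model $\mathcal{H}_0$ as the scheme-theoretic kernel of $\mathcal{H} \to \mathcal{S}$, and then lift the central isogeny $G \to H_0$ to Chevalley models. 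The paper instead enlarges the source: writing $C$ for the connected center of $H$, it replaces $\phi$ by the \emph{surjection} $\phi' : C \times G \to H$, notes that hyperspecials of $C(F) \times G(F)$ are products, and thereby reduces immediately to the purely surjective case $G \twoheadrightarrow G/Z$, dispatched in one line via $\mathcal{G} \to \mathcal{G}/\mathcal{Z}$ and SGA3. The paper's trick is shorter and sidesteps the checks your route needs (reductivity of the kernel $\mathcal{H}_0$ over $\mathcal{O}$, and compatibility of the Chevalley isogeny $\mathcal{G} \to \mathcal{H}_0$ with the specific embedding $\mathcal{H}_0 \hookrightarrow \mathcal{H}$); your approach works but carries more integral-model bookkeeping. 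One small point: in route (a) you should state explicitly that $G_{\mathrm{ad}} \to H_{\mathrm{ad}}$ is an \emph{isomorphism} (indeed $H_{0,\mathrm{ad}} = H_{\mathrm{ad}}$ since $(H_0)_{\mathrm{der}} = H_{\mathrm{der}}$), as you need to lift an arbitrary $\varphi \in H_{\mathrm{ad}}(F)$ back to $G_{\mathrm{ad}}(F)$.
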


\begin{proof} Observe first that it is enough to find some hyperspecial subgroups $A \subset G(F)$ and $B \subset H(F)$ such that $\phi(A) \subset B$. Indeed, let $K \subset H(F)$ be hyperspecial. By (HSb) we may choose $h \in H_{\rm ad}(F)$ with $K=hBh^{-1}$. But $\phi$ induces by \eqref{cd:defphiZQ} an isomorphism $G_{\rm ad} \simeq H_{\rm ad}$, so there is $g \in G_{\rm ad}(F)$ with $\phi(g)=h$, and $K':=gAg^{-1} \subset G(F)$ works.
\vskip 5pt
As a second and independent observation, we claim that we may reduce the lemma to the special case where $\phi$ is surjective (hence $S$ trivial).
Indeed, let us denote by $C$ the neutral component of the center of $H$, so that $C$ is a split torus.
Then the morphism $\phi': C \times G \rightarrow H, (c,g) \mapsto c\phi(g)$ is surjective with central kernel. 
Now any hyperspecial compact subgroup of $C(F) \times G(F)$ has the form $A' \times A$ with $A'$ and $A$ hyperspecial subgroups in $C(F)$ and $G(F)$ respectively, {\it e.g.} by (HSb). Hence, if the desired result is known for the surjective morphism $\phi'$, we can find a hyperspecial subgroup $B$ of $H(F)$  such that $\phi(A) \subset \phi'( A' \times A) \subset B$, this proving the desired result for $\phi$.

\vskip 5pt
It remains to prove the lemma when $\phi$ is surjective. In this case, $\phi$ factors as $$G \overset{f}{\longrightarrow} G/Z \overset{g}{\longrightarrow} H,$$ with $f$ the canonical map and $g$ an isomorphism. As the lemma is obvious for isomorphisms, we may and do assume $\phi=f$. Choose $\mathcal{G}$ a reductive (necessarily split) $\mathcal{O}$-model of $G$, let $\mathcal{T}$ be a maximal $\mathcal{O}$-split torus in $\mathcal{G}$ and set $T= \mathcal{T} \times_{\mathcal{O}} F$. The closed subgroup $Z \subset T$ certainly has an $\mathcal{O}$-model $\mathcal{Z} \subset \mathcal{T}$ (with same character group as $Z$).  By \cite[Prop. 4.3.1 (i)]{sga}, the quotient group scheme $\mathcal{G}/\mathcal{Z}$ is reductive over $\mathcal{O}$, and $\mathcal{G} \longrightarrow \mathcal{G}/\mathcal{Z}$ is a model of $\phi$ over $\mathcal{O}$.
By (HSa), the subgroups $A:=\mathcal{G}(\mathcal{O})$ and $B:=(\mathcal{G}/\mathcal{Z})(\mathcal{O})$ are hyperspecial in $G(F)$ and $H(F)=(G/Z)(F)$ and satisfy $\phi(A) \subset B$, and we are done by the first observation.
\end{proof}

\begin{proof}  (Of Proposition~\ref{P:isogeny}) 
Part (i) trivially implies (ii), so we focus on (i). We first prove the first assertion of (i).
Let $K$ be a hyperspecial subgroup of $H(k_v)$ such that 
$\pi_v$ is $K$-unramified. 
By Lemma~\ref{lem:existhyperspecial}, there is a hyperspecial subgroup $K'$ of $G(k_v)$ 
such that $\phi(K') \subset K$. 
As $k_v$ is a local field, the normal subgroup $\phi(G(k_v)) \subset H(k)$ generates, together with the center of $H(k_v)$, a finite index subgroup of $H(k_v)$, by Tate's finiteness of Galois cohomology. 
It follows that ${\pi_v}|_{G(k_v)}$ is a finite direct sum of irreducible representations which are permuted transitively by $H(k_v)$. These constituents are even $G_{\rm ad}(k_v)$-conjugate, as $\phi$ induces an isomorphism $G_{\rm ad} \simeq H_{\rm ad}$. 
As $\phi(K') \subset K$, some constituent of ${\pi_v}|_{G(k_v)}$ is $K'$-unramified,
so any constituent of ${\pi_v}|_{G(k_v)}$ is unramified with respect to a suitable $G_{\rm ad}(k_v)$-conjugate 
of $K'$. \par
We now prove the second assertion of (i).
Fix a constituent $\sigma$ of ${\pi_v}|_{G(k_v)}$. Choose a Borel pair $(B,T)$ defined over $k$ in $H$ and set $B'=\phi^{-1}(B)$ and $T'=\phi^{-1}(T)$ in $G$.
Then $\pi_v$ is an irreducible constituent of the (normalized) principal series 
${\rm Ind}_{B(k_v)}^{H(k_v)} \chi$ for some unramified character $\chi : T(k_v) \rightarrow \C^\times$.
Recall that $T$ and $T'$ being split, we have the decomposition $H(k_v) = T(k_v) \phi(G(k_v))$, by a standard Galois cohomology argument using Hilbert 90. 
This shows that the map $f \mapsto f \circ \varphi$ 
induces an injective $G(k_v)$-equivariant map
$$\left(\,\,{\rm Ind}_{B(k_v)}^{H(k_v)} \,\,\chi\,\,\right)|_{G(k_v)} \longrightarrow {\rm Ind}_{B'(k_v)}^{G(k_v)} \,\,\chi',$$ 
with $\chi' = \chi \circ \phi$, an unramified character of $T'(k_v)$, and thus $\sigma$ is a constituent of the right-hand side. This shows that $c(\sigma)=\phi^\vee(c(\pi_v))$ by the properties of the Satake isomorphism.
\end{proof}

\subsection{\bf Spin lifting.}  \label{SS:spin}
Consider the split group $\PGSp_{2n}$ whose dual group is $\Spin_{2n+1}(\C)$. Recall from \S \ref{SS:spingp} that one has the standard morphism
\[  \rho_{\C} :  \Spin_{2n+1}(\C) \longrightarrow \SO_{2n+1}(\C) \]
and by Proposition \ref{P:isogeny}, the associated functorial lifting is simply the restriction of automorphic forms 
\[  \begin{CD} 
\mathcal{A}(\PGSp_{2n}) @>{\rm rest.}>> \mathcal{A}(\Sp_{2n}). \end{CD} \]
On the other hand, by \S \ref{SS:spingp} again, one has the faithful Spin representation
\[  {\rm spin}:  \Spin_{2n+1}(\C) \longrightarrow \GL_{2^n}(\C). \]
Accordingly, there should be an associated Langlands functorial lifting 
\[  \mathcal{A}(\PGSp_{2n}) \longrightarrow \mathcal{A}(\GL_{2^n}) \]
from automorphic forms of $\GSp_{2n}$ to $\GL_{2^n}$; we call this the {\it Spin lifting} of $\PGSp_{2n}$.  
 \vskip 5pt
 
 \subsection{\bf The case $n=3$}  \label{SS:n=3}
 We shall now specialize to the case $n=3$.  
 In this case, one can describe the spin representation of $\Spin_7(\C)$ using triality.  
 The point is that there are 3 conjugacy classes of embeddings 
\[  \Spin_7(\C) \longrightarrow \Spin_8(\C) \]
permuted by the triality automorphism of $\Spin_8(\C)$.  
These 3 conjugacy classes of embeddings are characterized by their intersection with the center $Z_{\Spin_8} \simeq \mu_2 \times \mu_2$ of $\Spin_8(\C)$.
One may fix such an embedding $\iota$ so that the standard representation of $\Spin_7(\C)$ is compatible with the map $f_1^{\vee}$ (dual to the standard $f_1: \SO_8 \longrightarrow \PGSO_8$), in the sense that one has  a commutative diagram:
\begin{equation} \label{D:spin}
 \begin{CD}
  \Spin_7(\C) @>\iota>> \Spin_8(\C) \\
 @V{\rho_{\C}}VV   @VVf_1^{\vee}V \\
 \SO_7(\C)    @>>>  \SO_8(\C) 
   \end{CD} \end{equation}
We may designate the map $f_1^{\vee}$ as the standard morphism. On the other hand,  recall from the discussion in \S \ref{SS:VQ} that there are two other isogenies $f_2, f_3 : \SO_8 \longrightarrow \PGSO_8$ which factor as $f_2 = \theta \circ f_1$ and $f_3 = \theta^2 \circ f_1$, with $\theta$ a triality automorphism of $\PGSO_8$.
These induce dual isogenies
$$f_2^\vee : \Spin_8(\C) \overset{\theta^\vee}{\longrightarrow} \Spin_8(\C)  \overset{f_1^\vee}{\longrightarrow} \SO_8(\C),$$
and likewise $f_3^{\vee}$, with $\theta^\vee$ a triality automorphism of $\Spin_8(\C)$. We may designate $f_2^{\vee}$ and $f_3^{\vee}$ as the half-spin representations  of $\Spin_8(\C)$. By our discussion in \S \ref{SS:spingp}, the restriction of these  half-spin representations,  via $\iota : \Spin_7(\C) \rightarrow \Spin_8(\C)$, give rise to the spin representation of $\Spin_7(\C)$. In other words,  the spin representation of $\Spin_7(\C)$ is given by the composite map
\[  \begin{CD}
\Spin_7(\C) @>\iota>>  \Spin_8(\C)  @>f_2^{\vee}>> \SO_8(\C) @>{\rm std}>> \GL_8(\C).  \end{CD} \]
This then suggests a  construction of the spin lifting which is summarised by the following sequence of liftings:
\[  \begin{CD}
\mathcal{A}(\PGSp_6) @>\iota_*>>\mathcal{A}(\PGSO_8) @>f_2^*>>  \mathcal{A}(\SO_8)  @>\text{\cite{A, CKPSS}}>> \mathcal{A}(\GL_8) \end{CD} \]
 As we explain next, the functorial lifting $\iota_*$  can be constructed by the similitude theta correspondence.
\vskip 5pt

 \vskip 5pt

\subsection{\bf Theta correspondence.}
For our discussion of theta correspondence, we 
consider the dual reductive pair $\Sp_{2n} \times \O_{2m}$  of symplectic and orthogonal groups associated to a skew-symmetric and quadratic space of dimension $2n$ and $2m$ respectively. 
 For simplicity, assume that  the quadratic space underlying $\O_{2m}$ has  trivial discriminant. In our applications later on in the paper, we will assume that $m  >n$.
\vskip 5pt

Given a nontrivial additive character $\psi$ of $k \backslash \A$, 
the dual pair $\Sp_{2n} \times \O_{2m}$ is equipped with a Weil representation $\Omega_{\psi}$. One has an automorphic realization 
\[  \theta: \Omega_{\psi} \longrightarrow C^{\infty}([\Sp_{2n} \times \O_{2m}]) \]
given by the formation of theta series.
The global theta lifting of isometry groups is an equivariant  map
\[ \begin{CD}
\Theta:  \Omega_{\psi} \otimes \overline{\mathcal{A}_{cusp}(\Sp_{2n})} @>>> \mathcal{A}(\O_{2m})  \end{CD} \]
defined by
\[  \Theta(\phi, f)(h)  = \int_{[\Sp_{2n}]} \theta(\phi)(gh) \cdot \overline{f(g)} \, dg, \quad \text{for $\phi \in \Omega_{\psi}$ and $f \in \mathcal{A}_{cusp}(\Sp_{2n})$.} \]
\vskip 5pt

It is known that this theory of theta correspondence can be extended to the setting of the similitude dual pair $\GSp_{2n} \times \GO_{2m}$. More precisely, the Weil representation 
has a natural extension to the group
\[  (\GSp_{2n} \times \GO_{2m})^{\sim} = \{ (g,h) \in \GSp_{2n} \times \GO_{2m}: \sim(g) \cdot \sim (h) =1 \}. \]
Observe that this group sits in the short exact sequences:
\[  \begin{CD}
1 @>>> \Sp_{2n} @>>>  (\GSp_{2n} \times \GO_{2m})^{\sim} @>p>> \GO_{2m} @>>> 1. 
\end{CD} \]
\[   \begin{CD}
1 @>>> \O_{2m} @>>>  (\GSp_{2n} \times \GO_{2m})^{\sim} @>q>> \GSp_{2n} @>>> 1
\end{CD} \]
where $p$ and $q$ are the natural projections on the two factors.
Hence,  for $\phi \in \Omega_{\psi}$ and a cusp form $f \in \mathcal{A}_{cusp}([\GSp_{2n}])$ with a fixed central character, one can define an automorphic form $\theta(\phi,f)$ on $\GO_{2m}$ by:
\[ \Theta(\phi, f)(h) = \int_{[\Sp_{2n}]} \theta(\phi)(t_h g, h) \cdot \overline{f(g)} \, dg,  \]
for any $t_h \in \GSp_{2n}(\A)$ such that  $(t_h,h) \in (\GSp_{2n}(\A) \times \GO_{2m}(\A))^{\sim}$.  Note that the above definition is independent of the choice of $t_h$.
It can be more elegantly expressed as:
\[  \Theta(\phi,f) =  p_{!} \left( \theta(\phi) \cdot q^*(\overline{f}) \right). \]
The automorphic form  $\Theta(\phi,f)$ has the same central character as $f$, on identifying the centers of $\GO_{2m}$ and $\GSp_{2n}$ with $\mathbb{G}_m$ via their action on the underlying quadratic and skew-symmetric spaces. 
\vskip 5pt

To summarize,  one has a 
commutative diagram of global theta liftings, with the vertical arrows given by pullback and restriction of automorphic forms:
\vskip 5pt

\[
\begin{CD}
 \Omega_{\psi} \otimes  \overline{\mathcal{A}_{cusp}(\PGSp_{2n}) }@>\Theta>> \mathcal{A}(\PGO_{2m})  \\
 @VVV   @VVV  \\
\Omega_{\psi} \otimes  \overline{\mathcal{A}_{cusp}(\GSp_{2n})} @>\Theta>> \mathcal{A}(\GO_{2m})  \\
@VVV  @VVV \\
 \Omega_{\psi} \otimes \overline{\mathcal{A}_{cusp}(\Sp_{2n})} @>\Theta>> \mathcal{A}(\O_{2m})  
\end{CD} 
\]
If $\pi \in \mathcal{A}_{cusp}(\GSp_{2n})$ or $\mathcal{A}_{cusp}(\Sp_{2n})$ is a cuspidal representation, then its global theta lift to $\GO_{2m}$ or  $\O_{2m}$ is the subrepresentation
\[  \Theta(\pi) = \langle \Theta(\phi,f): \phi \in \Omega_{\psi}, \, f \in \pi \rangle \subset \mathcal{A}(\GO_{2m}) \quad \text{or} \quad \mathcal{A}(\O_{2m}). \]
\vskip 5pt

  For our purpose of constructing the spin lifting, we shall consider the case $m = n+1$. In this case, one knows that if $\pi \in \mathcal{A}_{cusp}(\GSp_{2n})$ is a cuspidal representation which is globally generic, then $\Theta(\pi)$ on $\GO_{2n+2}$ is globally generic and thus is nonzero \cite{GRS1}.
\vskip 5pt
\vskip 5pt

There is also an analogous theory of local (isometry and similitude) theta correspondence, for which we refer the reader to \cite{Ro} and \cite{GT1, GT2}. 
Another result we need is the  local theta correspondence of unramified representations. More precisely, we have:
 \vskip 5pt
 
 \begin{prop}  \label{P:unram}
 Assume that $\Sp_{2n} \times \O_{2n+2}$ is an unramified dual pair over a non-Archime\-dean local field $F$ of characteristic $0$.
 Let $\pi$ be an unramified irreducible representation of $\GSp_{2n}(F)$ and consider its local (small) theta lift $\theta(\pi)$ on $\GO_{2n+2}(F)$. One has:
 \vskip 5pt
 
 \begin{itemize}
 \item[(i)] $\theta(\pi)$    is nonzero, irreducible and unramified; 
 \item[(ii)] $\theta(\pi)$ has the same central character as $\pi$;
 \item[(iii)] $\theta(\pi)$ remains irreducible when restricted to $\GSO_{2n+2}(F)$. 
 \end{itemize}
 Hence, the local theta correspondence gives a map
 \[  \theta: \Irr_{ur, \chi}(\GSp_{2n}) \longrightarrow \Irr_{ur, \chi}(\GSO_{2n+2}) \]
 where $\Irr_{ur,\chi}$ denotes the set of irreducible unramified representations with central character $\chi$.
 At the level of Satake parameters, this map is given  by the top arrow $\iota$ in  the following natural diagram  of dual groups:
 \begin{equation} \label{D:theta}
   \begin{CD}
\GSpin_{2n+1}(\C) @>\iota>>  \GSpin_{2n+2}(\C)  \\
@V\rho'VV  @VV\rho'V  \\
\SO_{2n+1}(\C)  @>\iota^{\flat}>> \SO_{2n+2}(\C).  \end{CD} \end{equation}
 \end{prop}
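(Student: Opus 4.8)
The plan is to prove Proposition \ref{P:unram} by reducing each assertion to the corresponding statement for the isometry dual pair $\Sp_{2n} \times \O_{2n+2}$, where unramified theta correspondence is classical, and then tracking how the similitude structure and the centers interact. First I would fix hyperspecial maximal compact subgroups $K_{\Sp} \subset \Sp_{2n}(F)$ and $K_{\O} \subset \O_{2n+2}(F)$ coming from the self-dual lattice defining the unramified dual pair, together with the standard choice of Weil representation model in which the lattice model vector $\phi_0 \in \Omega_\psi$ is fixed by $K_{\Sp} \times K_{\O}$. Given the unramified $\pi$ on $\GSp_{2n}(F)$, restrict it to $\Sp_{2n}(F)$: by the same Galois-cohomology/Hilbert 90 argument used in the proof of Proposition \ref{P:isogeny} (applied to the exact sequence $1 \to \Sp_{2n} \to \GSp_{2n} \to \mathbb{G}_m \to 1$), the restriction is a finite direct sum of irreducible $K_{\Sp}$-unramified representations, all conjugate under $\GSp_{2n}(F)$. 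Pick one such constituent $\pi_0$; classical unramified theta correspondence for $\Sp_{2n} \times \O_{2n+2}$ (e.g. Kudla, Rallis, Howe; see \cite{Ro}) in the stable range $m = n+1 > n$ says the small theta lift $\theta(\pi_0)$ is nonzero, irreducible, and $K_{\O}$-unramified, with Satake parameter given by the map $\iota^\flat$ on $\SO$-dual groups.

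Next I would upgrade this to the similitude setting. The key mechanism is the short exact sequence $1 \to \O_{2n+2} \to (\GSp_{2n} \times \GO_{2n+2})^\sim \xrightarrow{q} \GSp_{2n} \to 1$ and its companion with $p$: the similitude theta lift $\theta(\pi)$ is obtained, following \cite{GT1,GT2}, essentially as $\mathrm{Ind}$ from $\O_{2n+2}(F)$ up to $\GO_{2n+2}(F)$ of the isometry lift $\theta(\pi_0)$ (more precisely, via the mixed model $p_! \circ q^*$), and its restriction back to $\O_{2n+2}(F)$ recovers the $\GO_{2n+2}(F)$-orbit of $\theta(\pi_0)$. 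Unramifiedness of $\theta(\pi)$ with respect to a hyperspecial subgroup of $\GO_{2n+2}(F)$ covering $K_{\O}$ then follows because the relevant Jacquet module / unramified-vector computation is compatible with taking similitudes; here I would use property (HSa)--(HSb) on hyperspecial subgroups exactly as in Lemma \ref{lem:existhyperspecial}. For (ii), the central character statement is immediate from the displayed formula $\Theta(\phi,f)$ and the remark following it, once localized: the automorphic statement "$\Theta(\phi,f)$ has the same central character as $f$, identifying both centers with $\mathbb{G}_m$" has an obvious local analogue. For (iii), restricting $\theta(\pi)$ from $\GO_{2n+2}(F)$ to $\GSO_{2n+2}(F)$: one checks that the $\GO/\GSO = \mathbb{Z}/2$ action does not split $\theta(\pi)$, equivalently that $\theta(\pi) \not\cong \theta(\pi) \otimes \det$; for unramified $\pi$ this can be read off from the Satake parameter landing in $\GSpin_{2n+2}(\C)$ (as opposed to requiring the full $\mathrm{O}$-dual group), or alternatively from the fact that the MVW-type involution acts trivially on unramified parameters in this range.

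Finally I would identify the Satake-parameter map with the top arrow $\iota$ of diagram \eqref{D:theta}. This is a computation with the Satake isomorphism: on the isometry side the lift corresponds to $\iota^\flat : \SO_{2n+1}(\C) \to \SO_{2n+2}(\C)$ (the standard "add a hyperbolic plane / trivial summand" inclusion dual to the theta correspondence, well documented in the local unramified literature), and the vertical maps $\rho'$ in \eqref{D:theta} are the standard isogenies $\GSpin \to \SO$ whose compatibility was recorded in \S\ref{SS:spingp}. Since the similitude lift matches the isometry lift after composing with $\rho'$, and since the central characters match by (ii), the similitude Satake parameter is forced to be $\iota(c(\pi))$ — the only lift of $\iota^\flat(\rho'(c(\pi)))$ to $\GSpin_{2n+2}(\C)$ with the prescribed similitude (central) character. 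The main obstacle I expect is the bookkeeping in the second step: carefully relating the similitude local theta lift to an induction of the isometry lift while simultaneously controlling hyperspecial subgroups on the $\GO$ (and $\GSO$) side, since $\GO_{2n+2}$ is disconnected and one must be careful that the "unramified" vector is not accidentally killed by the component-group action; this is where I would lean most heavily on \cite{GT1,GT2,Ro} and on the structure theory already set up in Lemma \ref{lem:existhyperspecial}.
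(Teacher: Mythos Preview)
Your overall strategy---reduce to the isometry case and then upgrade to similitudes---is the same as the paper's, but there is a genuine gap in your unramifiedness step, and the paper's way of filling it is the real content of the proof.

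You correctly obtain from the isometry theory that $\theta_\psi(\pi_\flat)$ is $K'_\flat$-unramified, and from \cite[Lemma 2.2]{GT1} that the similitude lift $\theta(\pi)$ is nonzero, irreducible, has the same central character as $\pi$, and contains $\theta_\psi(\pi_\flat)$ as an $\SO(V)$-summand. (This is exactly what the paper does; note however that $\theta(\pi)$ is \emph{not} literally ${\rm Ind}_{\O(V)}^{\GO(V)}\theta(\pi_0)$ as you write---the relationship is the one given by that lemma.) But from this you only know $\dim\theta(\pi)^{K'_\flat}\neq 0$, and you need $\dim\theta(\pi)^{K'}\neq 0$ for a hyperspecial $K'$ in the full similitude group. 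Your appeal to ``compatibility with taking similitudes'' and to (HSa)--(HSb) does not do this: those properties control existence and conjugacy of hyperspecial subgroups, not the passage from $K'_\flat$-fixed to $K'$-fixed inside an arbitrary irreducible representation. The paper closes this gap with two ingredients: first, an explicit Jacquet-module computation (inductive in $n$, following Kudla's method but using the similitude Jacquet modules of \cite[Thms.~A.1, A.2]{GT2}) showing that every constituent of $\Theta(\pi)$ lies in a \emph{specific} unramified principal series $i_{B'}(\chi_1\times\cdots\times\chi_n\times|{-}|^{m-n-1}\times\cdots\times 1\times\mu|{-}|^{-(m-n)(m-n-1)/4})$ of $\GSO(V)$; second, a one-line lemma that in any unramified principal series of $\GSO(V)$ one has $\dim(\,\cdot\,)^{K'}=\dim(\,\cdot\,)^{K'_\flat}=1$. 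Together these force $\theta(\pi)$ to be $K'$-unramified.

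Your Satake-parameter argument also has a soft spot. You want to say that $\iota(c(\pi))$ is the \emph{unique} lift of $\iota^\flat(\rho'(c(\pi)))$ to $\GSpin_{2n+2}(\C)$ with the prescribed central character, but you do not verify this uniqueness (the fibers of $\rho':\GSpin_{2n+2}(\C)\to\SO_{2n+2}(\C)$ are $\mathbb{G}_m$-torsors, and one must check that the central character pins down the ambiguity on the nose, including in the special case $m=n+1$). The paper sidesteps this entirely: once $\theta(\pi)$ is known to lie in the explicit principal series above, its Satake parameter is read off directly from the inducing data, and one checks by hand on maximal tori that this is exactly $\iota(c(\pi),c({\rm triv}))$.
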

 
 \noindent So as not to disrupt our discussion, the proof of this proposition is given in Appendix A at the end of the paper.  
 We should also mention that the global and local (similitude) theta correspondences are compatible, in the following sense \cite[Prop. 3.1]{G}:
 \vskip 5pt
 
 \begin{prop} \label{P:compa}
 Suppose $\pi$ is a cuspidal automorphic representation of $\GSp_{2n}(\A)$ such that its theta lift $\Theta(\pi)$ to $\GO_{2m}(\A)$ is nonzero and contained in the space of square-integrable automorphic forms (with fixed central character). Then $\Theta(\pi)$ is irreducible and for all places $v$, 
 \[  \Theta(\pi)_v  \simeq \theta(\pi_v) \]
 where the RHS is the local theta lift of $\pi_v$.  
 More generally (i.e. without assuming that $\Theta(\pi)$ is contained in the space of square-integrable automorphic forms), for any irreducible subquotient $\sigma$ of 
$\Theta(\pi)$, 
\[  \sigma_v \simeq \theta(\pi_v) \quad \text{ for almost all $v$.} \]
  \end{prop}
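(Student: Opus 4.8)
The plan is to reduce the global statement to the local unramified computation of Proposition \ref{P:unram} combined with a standard argument about theta lifts of square-integrable (or merely automorphic) representations. First I would treat the square-integrable case. Assuming $\Theta(\pi)$ is nonzero and square-integrable with fixed central character, the key input is the local-global compatibility of the similitude theta correspondence established in \cite[Prop. 3.1]{G}: after unfolding the definition of $\Theta(\phi,f)$ via $\Theta(\phi,f) = p_!\left(\theta(\phi)\cdot q^*(\overline f)\right)$ and writing $\pi = \otimes_v \pi_v$, one shows that $\Theta(\pi)$ is nonzero only if $\theta(\pi_v) \neq 0$ for all $v$, and that in that case there is a nonzero $(\GSp_{2n}(\A)\times\GO_{2m}(\A))^\sim$-equivariant map from $\Omega_\psi \otimes \overline{\pi}$ into $\mathcal A(\GO_{2m})$ whose image, when restricted to $\GO_{2m}$, is a quotient of $\otimes_v \theta(\pi_v)$. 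Since $\Theta(\pi)$ is square-integrable and each $\theta(\pi_v)$ is (an irreducible) representation of $\GO_{2m}(k_v)$, the standard argument (using that a square-integrable automorphic representation generated by such a restricted tensor product must be irreducible, as in Moeglin–Vignéras–Waldspurger's treatment of isometry theta and its similitude variant) shows $\Theta(\pi)$ is irreducible and $\Theta(\pi)_v \simeq \theta(\pi_v)$ at every place.

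For the second, more general assertion, I would argue as follows. Let $\sigma$ be any irreducible subquotient of $\Theta(\pi)$. At almost all places $v$, the local representation $\pi_v$ is unramified, so by Proposition \ref{P:unram}(i) the local theta lift $\theta(\pi_v)$ is nonzero, irreducible and unramified; in particular it is the unique unramified constituent of the relevant (big) theta lift $\Theta(\pi_v)$. On the other hand, restricting the equivariant map defining $\Theta(\pi)$ and passing to the $v$-component, one sees that $\sigma_v$ is a subquotient of the local big theta lift $\Theta(\pi_v)$; since $\sigma_v$ is itself unramified at almost all $v$ (as $\sigma$ is automorphic), it must coincide with the unramified constituent $\theta(\pi_v)$ for almost all $v$. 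This is exactly the claimed statement $\sigma_v \simeq \theta(\pi_v)$ for almost all $v$.

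The step I expect to require the most care is the square-integrable case: namely verifying that $\Theta(\pi)$, a priori only a sub\emph{space} of $\mathcal A(\GO_{2m})$ of square-integrable forms generated by the theta integrals $\Theta(\phi,f)$, is actually irreducible and equals $\otimes_v \theta(\pi_v)$ place by place. In the isometry setting this is the classical Howe duality plus the Rallis inner product formula argument; in the similitude setting one must keep track of the group $(\GSp_{2n}\times\GO_{2m})^\sim$ and the exact sequences relating it to $\GO_{2m}$ and $\GSp_{2n}$, and use that the central characters match (as recorded in the discussion preceding the proposition). All of this is precisely the content of \cite[Prop. 3.1]{G}, which we may invoke; the role of our proof is simply to organize these inputs and then deduce the ``almost all $v$'' statement from Proposition \ref{P:unram} as above.
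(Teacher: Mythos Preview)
Your proposal is essentially correct and aligned with the paper's treatment: the paper does not give a proof of this proposition at all, but simply cites \cite[Prop.~3.1]{G} as the source for the entire statement. Your sketch of the square-integrable case correctly identifies this reference as the key input, and your argument for the ``almost all $v$'' statement (via the uniqueness of the unramified constituent) is a standard and correct way to obtain it.

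One small point of care: for the second assertion you invoke Proposition~\ref{P:unram}, but that proposition is stated only in the special case $m=n+1$; the general version you need is Proposition~\ref{P:unram2}(i), which guarantees that \emph{every} irreducible subquotient of the local big theta lift $\Theta(\pi_v)$ lies in a single unramified principal series, hence has a unique unramified constituent. With that adjustment your argument goes through cleanly.
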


 \vskip 5pt


\vskip 5pt

\subsection{\bf Spin lifting for  $n=3$.}  
\label{subsect:spinliftingnequals3}
We now specialise to the case $n=3$. We shall prove the following theorem which extends \cite[Thm. 2.3(ii)]{CL}:
\vskip 5pt

\begin{thm}
\label{thm:weakspinlift}
Let $\pi$ be a cuspidal representation of $\PGSp_6$ whose restriction to $\Sp_6$ possesses a generic A-parameter. 
Then the (weak) Spin lifting of $\pi$ to $\GL_8$ exists. 
\end{thm}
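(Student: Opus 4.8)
The plan is to realize the Spin lifting as the composition of three functorial liftings indicated in the diagram at the end of \S\ref{SS:n=3}, namely
\[ \begin{CD} \mathcal{A}(\PGSp_6) @>\iota_*>> \mathcal{A}(\PGSO_8) @>f_2^*>> \mathcal{A}(\SO_8) @>{\rm std}>> \mathcal{A}(\GL_8). \end{CD} \]
First, I would construct $\iota_*$ via the similitude theta correspondence for the dual pair $\GSp_6 \times \GO_8$ with $\GO_8$ attached to a split (trivial-discriminant) quadratic space of dimension $8$. Given a cuspidal $\pi$ on $\PGSp_6$ as in the statement, its restriction to $\Sp_6$ has a generic A-parameter; by \cite{GRS1} the global theta lift $\Theta(\pi)$ to $\GO_8$ is globally generic, hence nonzero. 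Pulling back along $\GSO_8$ and using Proposition~\ref{P:unram}(iii), one gets a (cuspidal or at least square-integrable) automorphic representation $\Sigma = \Theta(\pi)$ on $\PGSO_8$, and by Propositions~\ref{P:unram} and \ref{P:compa} its Hecke--Satake family is $\iota(c(\pi))$ for the map $\iota$ of \eqref{D:theta} (restricted to trivial similitude/central character, i.e. the $\Spin_7 \hookrightarrow \Spin_8$ of diagram \eqref{D:spin}). One must check $\Sigma$ is not a residual Eisenstein representation in a problematic way, or more precisely that it lies in the discrete (square-integrable) spectrum so that Proposition~\ref{P:compa} applies at all places — this should follow from cuspidality of the lift in the generic case, or be arranged by enlarging $S$.

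Second, I would apply $f_2^* = f_2^{\vee}$: by Proposition~\ref{P:isogeny}, pulling back (restricting) $\Sigma$ from $\PGSO_8$ along the isogeny $f_2 : \SO_8 \to \PGSO_8$ produces an automorphic representation $\sigma$ of $\SO_8$ whose Hecke--Satake family is $f_2^{\vee}(c(\Sigma)) = f_2^{\vee}(\iota(c(\pi)))$. The whole point of triality is that although $f_1^{\vee}\circ\iota$ realizes the \emph{standard} $8$-dimensional representation of $\Spin_7(\C)$, the composite $f_2^{\vee}\circ\iota$ realizes one of the \emph{half-spin} representations of $\Spin_8(\C)$ restricted to $\Spin_7(\C)$, which by the branching computed in \S\ref{SS:spingp} (the case $V = V_1 \oplus V_2$ with $\dim V_1$ odd, $\dim V_2$ odd) is precisely the spin representation ${\rm spin}: \Spin_7(\C) \to \GL_8(\C)$. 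So $c(\sigma) = {\rm spin}(c(\pi))$ up to composing with the map $\SO_8(\C) \hookrightarrow \GL_8(\C)$ still to be applied.

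Third, I would apply the standard base change $\SO_8 \to \GL_8$ of Cogdell--Kim--Piatetski-Shapiro--Shahidi \cite{CKPSS} (generic case; Arthur \cite{A} or Cai--Friedberg--Kaplan \cite{CFK} in general) to $\sigma$, obtaining an isobaric automorphic representation $\Pi$ of $\GL_8$ with $c(\Pi) = {\rm std}(c(\sigma)) = {\rm std}\circ f_2^{\vee}\circ\iota\, (c(\pi)) = {\rm spin}(c(\pi))$; this $\Pi$ is the desired weak Spin lifting. To invoke \cite{CKPSS} one needs $\sigma$ to be globally generic, which holds because $\Sigma = \Theta(\pi)$ is globally generic on $\PGSO_8$ (by \cite{GRS1}) and genericity is inherited under the restriction along the isogeny $f_2$ (a generic constituent of the pullback exists by the local statements in Proposition~\ref{P:isogeny}).

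The main obstacle, and the step deserving the most care, is the first one: ensuring that the theta lift $\Theta(\pi)$ to $\GO_8$ (equivalently to $\GSO_8$/$\PGSO_8$) is \emph{nonzero} and has good automorphic properties under only the hypothesis that $\pi$ has a generic A-parameter rather than being globally generic. Nonvanishing in the globally generic case is \cite{GRS1}; to pass to the merely-generic-A-parameter case one must either invoke the theory of A-packets of Arthur \cite{A} and Xu \cite{X1,X2} (together with Moeglin--Waldspurger \cite{MW1,MW2}) to locate a globally generic member in the relevant near-equivalence class, or run the argument directly with the tempered/generic A-parameter hypothesis — this is exactly where the dependence on the twisted weighted fundamental lemma enters, as flagged in the introduction. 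A secondary point is verifying that the triality identification of $f_2^{\vee}\circ\iota$ with the spin representation is compatible with the \emph{specific} embedding $\iota$ arising from the theta correspondence in \eqref{D:theta}; this requires matching the central characters / the intersection of $\iota(\Spin_7(\C))$ with $Z_{\Spin_8}$ against the one singled out in diagram \eqref{D:spin}, which pins down which of $f_2^{\vee}, f_3^{\vee}$ to use and is a finite check with the conventions of \cite{KMRT}.
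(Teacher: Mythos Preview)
Your proposal is correct and follows essentially the same three-step strategy as the paper: theta lift to $\PGSO_8$, pullback via $f_2$, then transfer to $\GL_8$. The ``main obstacle'' you single out --- replacing $\pi$ by a globally generic member of its near-equivalence class before theta-lifting, using the A-packet machinery of Arthur and Xu --- is exactly what the paper isolates as Lemma~\ref{L:generic}(i) (invoking \cite{GRS2} for the generic member of the $\Sp_6$ A-packet and \cite{X2} to lift it to $\GSp_6$), and your discussion of the triality identification $f_2^{\vee}\circ\iota \simeq {\rm spin}$ matches \S\ref{SS:n=3}.

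Two minor points where you are more cautious than necessary. First, you worry about $\Theta(\pi')$ lying in the discrete spectrum so that Proposition~\ref{P:compa} applies at all places; for the \emph{weak} lifting this is not needed, since the ``More generally'' clause of Proposition~\ref{P:compa} already gives agreement at almost all places regardless of square-integrability (and indeed $\Theta(\pi')$ need not be cuspidal when the A-parameter of $\pi|_{\Sp_6}$ contains ${\bf 1}$). Second, for the last step you do not need to verify that a generic constituent survives under pullback along $f_2$: one may simply invoke \cite{A} rather than \cite{CKPSS}, which applies to arbitrary automorphic representations of $\SO_8$.
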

\vskip 5pt

Let us first establish a lemma which may be of independent interest.

\vskip 5pt
\begin{lemma} \label{L:generic}
Let $\pi$ be a cuspidal representation of $\GSp_{2n}$ whose restriction to $\Sp_{2n}$ possesses a generic A-parameter. 
\vskip 5pt

\noindent (i) The cuspidal representation $\pi$ is nearly equivalent to a globally generic cuspidal representation  $\pi'$ of $\GSp_{2n}$.
\vskip 5pt

\noindent (ii) Fix a place $v$ of $k$ and $\pi'$ as in (i). If $\pi_v$ is unramified, then $\pi'_v \simeq \pi_v$.
\end{lemma}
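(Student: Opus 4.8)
\textbf{Proof proposal for Lemma~\ref{L:generic}.} The plan is to deduce both statements from Arthur's classification of the discrete automorphic spectrum of $\Sp_{2n}$ (and its extension to $\GSp_{2n}$), using the fact that a generic A-parameter has trivial $\SL_2$-factor, so that its A-packet coincides with an L-packet and contains a unique generic member at every place. First I would pass from $\pi$ on $\GSp_{2n}$ to a constituent $\pi_0$ of its restriction to $\Sp_{2n}$; by hypothesis $\pi_0$ lies in the A-packet $\Pi_{\psi_0}$ of a generic A-parameter $\psi_0$, which is therefore a tempered L-parameter $\phi_0$. Arthur's multiplicity formula, together with the local structure of tempered L-packets, shows that the generic member $\pi_0^{\mathrm{gen}} = \otimes_v \pi_{0,v}^{\mathrm{gen}}$ (the Whittaker-generic constituent at each place, with respect to a fixed Whittaker datum) occurs in the discrete — indeed cuspidal, since $\phi_0$ is generic and, being a \emph{cuspidal} $\pi$'s parameter, does not factor through a proper Levi — automorphic spectrum of $\Sp_{2n}$, because the associated character of the global component group is trivial on it (Whittaker-normalization of the local intertwining characters). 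This produces a globally generic cuspidal $\pi_0'$ on $\Sp_{2n}$ nearly equivalent to $\pi_0$, hence with $\pi_{0,v}' \simeq \pi_{0,v}$ at every unramified place and in particular at $v$.

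The next step is to lift this back up to $\GSp_{2n}$. Since $\pi$ has trivial central character (it is a representation of $\PGSp_{2n}$), and $\GSp_{2n}/\Sp_{2n} \cong \mathbb{G}_m$ via the similitude factor, any cuspidal representation of $\PGSp_{2n}$ whose restriction contains $\pi_0$ is obtained from $\pi_0$ by the standard extension/induction procedure along the exact sequence $1 \to \Sp_{2n} \to \GSp_{2n} \to \mathbb{G}_m \to 1$; conversely, a globally generic cuspidal $\pi_0'$ on $\Sp_{2n}$ extends to a globally generic cuspidal $\pi'$ on $\GSp_{2n}$ (with trivial central character, after twisting, if one wants $\PGSp_{2n}$), by results on restriction of cuspidal representations between a group and its similitude group — one may appeal here to the compatibility of A-packets for $\GSp_{2n}$ and $\Sp_{2n}$ in the work of B.~Xu \cite{X1,X2}, which attaches to the generic A-parameter of $\pi_0$ on $\Sp_{2n}$ a generic A-parameter on $\GSp_{2n}$ whose packet contains $\pi$, and whose unique globally generic member $\pi'$ is cuspidal and nearly equivalent to $\pi$. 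This gives (i).

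For (ii), fix $v$ with $\pi_v$ unramified. Then $\pi_{0,v}$ is an unramified constituent of $\pi_v|_{\Sp_{2n}(k_v)}$, so $\pi_{0,v}$ is unramified and equals the unramified member of the tempered L-packet $\Pi_{\phi_{0,v}}$; the unramified member of an unramified tempered L-packet is automatically the Whittaker-generic one, so $\pi_{0,v}^{\mathrm{gen}} \simeq \pi_{0,v}$, i.e. $\pi_{0,v}' \simeq \pi_{0,v}$. Passing back to $\GSp_{2n}$: both $\pi_v$ and $\pi'_v$ are unramified members of the $\GSp_{2n}(k_v)$-packet above $\Pi_{\phi_{0,v}}$ with the same (unramified, hence determined) central character, and such a packet contains a \emph{unique} unramified representation (an unramified $L$-packet for a split group is a singleton at the level of the relevant isogeny — more precisely, induction from $\Sp_{2n}$ to $\GSp_{2n}$ of the unramified $\pi_{0,v}$ with prescribed central character is irreducible unramified and is the common value of $\pi_v$ and $\pi'_v$). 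Hence $\pi'_v \simeq \pi_v$.

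The main obstacle I expect is the bookkeeping at the interface between $\Sp_{2n}$ and $\GSp_{2n}$: controlling central characters, making sure the globally generic member one extracts on $\Sp_{2n}$ really does come from a \emph{cuspidal} representation of $\PGSp_{2n}$ (not merely of $\GSp_{2n}$ with some central twist), and invoking the correct form of Xu's results so that "generic A-parameter for $\Sp_{2n}$" transfers cleanly to "generic A-parameter for $\GSp_{2n}$" together with the statement that the packet is cuspidal and has a unique globally generic member. The purely local unramified assertion (ii) is then comparatively soft, once one knows the unramified member of an unramified tempered packet is the generic one and that induction to the similitude group preserves this.
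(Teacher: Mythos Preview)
Your overall strategy is the same as the paper's: produce a globally generic member of the $\Sp_{2n}$-packet, lift it to $\GSp_{2n}$ via Xu's theory, and for (ii) compare inside the local $\GSp_{2n}$-packet. Two remarks, one minor and one substantive.

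First, the lemma is stated for $\GSp_{2n}$, not $\PGSp_{2n}$; your assumption that $\pi$ has trivial central character is unwarranted, though nothing in your argument actually uses it. For (i), your appeal to Arthur's multiplicity formula to exhibit the globally generic cuspidal member on $\Sp_{2n}$ is a legitimate alternative to the paper's route, which instead invokes the automorphic descent of Ginzburg--Rallis--Soudry \cite{GRS2}; both land in Xu's global packet $\tilde{\mathcal{A}}_{\Psi_\flat}$ on $\GSp_{2n}$, from which $\pi'$ is extracted exactly as you describe.

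The substantive gap is in (ii). After identifying $\pi_{0,v}\simeq\pi'_{0,v}$ on $\Sp_{2n}(k_v)$, you assert that the $\GSp_{2n}(k_v)$-packet has a \emph{unique unramified member} and that $\pi'_v$ is unramified, but you justify neither. The paper closes this cleanly by a different mechanism: Xu's results \cite[Thm.~4.6, Prop.~4.4(2)]{X1} give a bijection between the local $\GSp_{2n}$-packet $\tilde{\mathcal{A}}_{\Psi_{\flat,v}}$ and the $\GSp_{2n}(k_v)$-orbits in the $\Sp_{2n}$-packet $\Pi^\flat_v$. Now when $\pi_v$ is unramified, its restriction to $\Sp_{2n}(k_v)$ is a direct sum of unramified representations permuted transitively by $\GSp_{2n}(k_v)$ (as in Proposition~\ref{P:isogeny}(i)), and by Arthur's unramified local Langlands this set is already the \emph{entire} packet $\Pi^\flat_v$. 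Hence there is a single $\GSp_{2n}(k_v)$-orbit, so $\tilde{\mathcal{A}}_{\Psi_{\flat,v}}=\{\pi_v\}$ is a singleton, and $\pi'_v=\pi_v$ is forced with no need to argue separately that $\pi'_v$ is unramified or that unramified members are unique. Your heuristic about ``induction with prescribed central character being irreducible'' is pointing in this direction but does not replace the orbit argument.
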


\begin{proof}
(i)  Consider 
 \[  \pi|_{\Sp_{2n}} := \{ f|_{\Sp_{2n}(\A)} : f \in \pi \} \subset \mathcal{A}_{cusp}(\Sp_{2n}). \]
 By hypothesis, this submodule of $\mathcal{A}_{cusp}(\Sp_{2n})$ has a generic A-parameter $\Psi_{\flat}$ with an associated submodule  $\mathcal{A}_{\Psi_{\flat}} \subset  \mathcal{A}_{cusp}(\Sp_{2n})$ (the global A-packet), so that
 \[  \pi|_{\Sp_{2n}}  \subset  \mathcal{A}_{\Psi_{\flat}} \subset \mathcal{A}_{cusp}(\Sp_{2n}). \]
 As shown by Ginzburg-Rallis-Soudry via automorphic descent \cite{GRS2},  the A-packet $\mathcal{A}_{\Psi_{\flat}}$ contains a globally generic cuspidal representation $\pi'_{\flat}$. In particular, the irreducible summands of $\pi|_{\Sp_{2n}}$ are nearly equivalent to $\pi'_{\flat}$.  In \cite{X2}, Bin Xu has constructed a global A-packet 
 \[ \tilde{\mathcal{A}}_{\Psi_{\flat}}  \subset \mathcal{A}_{cusp}(\GSp_{2n}), \]
 such that 
  \[  \tilde{\mathcal{A}}_{\Psi_{\flat}} |_{\Sp_{2n}} = \mathcal{A}_{\Psi_{\flat}}. \]
  Moreover, the global A-packet $\tilde{\mathcal{A}}_{\Psi_{\flat}}$ is unique up to twisting by automorphic quadratic characters.
  Up to replacing $\tilde{\mathcal{A}}_{\Psi_{\flat}}$ by an automorphic quadratic twist, we may thus assume that $\pi \subset \tilde{\mathcal{A}}_{\Psi_{\flat}}$. 
   Now there is some irreducible summand  $\pi' \subset  \tilde{\mathcal{A}}_{\Psi_{\flat}}$ such that 
   \[  \pi'_{\flat} \subset \pi'|_{\Sp_{2n}}. \]
  Then $\pi$ is nearly equivalent to $\pi'$, which is globally generic (since $\pi'_{\flat}$ is). 
  \vskip 5pt
  
  \noindent (ii) The representations $\pi$ and $\pi'$ both belong to the generic global A-packet $\tilde{\mathcal{A}}_{\Psi_{\flat}}$ introduced above. By \cite{X2}, the members of $\tilde{\mathcal{A}}_{\Psi_{\flat}}$ (that 
  Xu also calls a global $L$-packet) are constructed as a restricted tensor product, over all places $w$ of $k$, of the members of local L-packets $\tilde{\mathcal{A}}_{\Psi_{\flat,w}}$ defined by B. Xu in \cite{X1}. By \cite[Thm. 4.6]{X1} (see also the discussion {\it loc.\,cit.} after Prop. 3.10), for each finite place $w$, the irreducible constituents of the restriction of the members of $\tilde{\mathcal{A}}_{\Psi_{\flat,w}}$ form a local L-packet $\Pi^\flat_{w}$ of $\Sp_{2n}(k_w)$ (as defined in \cite{A}).
Moreover, the restriction to ${\rm Sp}_{2n}(k_w)$ also induces a bijection between 
  $\tilde{\mathcal{A}}_{\Psi_{\flat,w}}$ and the ${\rm GSp}_{2n}(k_w)$-orbits in $\Pi^\flat_{w}$, by~\cite[Prop. 4.4 (2)]{X1}. 
  
  \vskip 5pt
  
  Assume $\pi_w$ is unramified. An argument given in the proof of Proposition~\ref{P:isogeny} (i) shows that the restriction of $\pi_w$ to $\Sp_{2n}(k_w)$ is a finite direct sum of unramified representations which are permuted transitively by $\GSp_{2n}(k_w)$. But by the unramified case of the local Langlands correspondence for ${\rm Sp}_{2n}(k_w)$ in \cite{A}, the set of these constituents is the full L-packet $\Pi^\flat_{w}$. By the properties of $\tilde{\mathcal{A}}_{\Psi_{\flat,w}}$ recalled above, this shows $\tilde{\mathcal{A}}_{\Psi_{\flat,w}}\,=\,\{ \pi_w\}$. 
 \end{proof}

We can now prove Theorem \ref{thm:weakspinlift}.

\begin{proof} (of Theorem \ref{thm:weakspinlift})
By Lemma \ref{L:generic} (i), 
$\pi$ is nearly equivalent to a globally generic cuspidal representation $\pi'$ of $\PGSp_6$.
  By \cite{GRS1},  the global theta lift $\Theta(\pi')$ is a nonzero globally generic (not necessarily cuspidal) automorphic representation of $\PGSO_8$. In any case,
by the commutative diagram (\ref{D:theta}) and Proposition \ref{P:compa}, 
the Hecke-Satake family of $\Theta(\pi')$ is the image of $c(\pi')=c(\pi)$
under the natural map $\iota : {\rm Spin}_7(\C) \rightarrow {\rm Spin}_8(\C)$, and
the pullback of $\Theta(\pi')$ to $\SO_8$ via the natural map $ f_1: \SO_8  \rightarrow  \PGSO_8$ has A-parameter 
\[  \Psi'  = \Psi \boxplus {\bf 1}, \]
where $\Psi$ is the A-parameter of $\pi|_{\Sp_6}$.
On the other hand, the discussion in \S \ref{SS:spin} and \S \ref{SS:n=3}, together with Proposition~\ref{P:isogeny} (ii), show that using the pullback of $\Theta(\pi')$ via $f_2:  \SO_8 \rightarrow \PGSO_8$, the map
\[  \pi \mapsto  \text{the constituents of $f_2^*(\Theta(\pi'))$} \]
exhibits the Spin lifting from $\PGSp_6$ to $\SO_8$.  Composing this with the lifting from $\SO_8$ to $\GL_8$ due to \cite{CKPSS} or \cite{A}, we obtain the desired Spin lifting from $\PGSp_6$ to $\GL_8$.   
\end{proof}
\vskip 5pt

  In fact, by being more careful with the above proof, one has the following slight strengthening of Theorem \ref{thm:weakspinlift}, which says that  the weak Spin lifting provided by Theorem \ref{thm:weakspinlift} is {\it strong} at unramified places and Archimedean places.

 \vskip 5pt
 
 \begin{thm} \label{T:strong}
Let  $\pi \subset \mathcal{A}_{cusp}(\PGSp_6)$ be as in Theorem \ref{thm:weakspinlift}  and let $\sigma$ be its automorphic weak spin lift constructed on $\GL_8$ therein.  
 \vskip 5pt
 
 \noindent (i) For each finite place $v$ of $k$, 
  \[    \text{$\pi_v$ unramified}  \Longrightarrow \text{$\sigma_v$ unramified}  \]
  with $c(\sigma_v) = {\rm spin}(c(\pi_v))$.
    
    \vskip 5pt
    
    \noindent (ii) Let  $v$ be an Archimedean place of $v$ and let $c(\pi_v)$ be the infinitesimal character of $\pi_v$, regarded as a semisimple element in $\mathfrak{spin}_7 = {\rm Lie}(\PGSp_6^{\vee}) \otimes_{k_v} \C$. Then the infinitesimal character of $\sigma_v$ is given by
  \[  c(\sigma_v) = {\rm d}{\rm spin} (c(\pi_v)) \in \mathfrak{gl}_8. \]
  \vskip 5pt
  
  \noindent (iii)  Assume that  the A-parameter of $\pi|_{\Sp_6}$ does not contain the trivial representation ${\bf 1}$. Then the automorphic representation $\sigma$ of $\GL_8$ is an isobaric sum   
  \[  \sigma = \sigma_1 \boxplus \sigma_2 \boxplus\dots  \boxplus \sigma_k \]
where each $\sigma_i$ is a self-dual cuspidal representation of some $\GL_{n_i}$ which is of orthogonal type (i.e. $L^S(s, \sigma_i, {\rm Sym}^2)$ has a pole at $s=1$) and $\sigma_i \ne \sigma_j$ if $i \ne j$.
    \end{thm}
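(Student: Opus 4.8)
The plan is to prove the three parts of Theorem~\ref{T:strong} by tracking more carefully, place by place, the chain of liftings used in the proof of Theorem~\ref{thm:weakspinlift}, namely
\[
\pi \;\leadsto\; \pi' \;\leadsto\; \Theta(\pi') \;\leadsto\; f_2^*(\Theta(\pi')) \;\leadsto\; \sigma,
\]
where $\pi'$ is the globally generic representation nearly equivalent to $\pi$ furnished by Lemma~\ref{L:generic}. For part~(i), fix a finite place $v$ with $\pi_v$ unramified. By Lemma~\ref{L:generic}~(ii) we have $\pi'_v \simeq \pi_v$, so it suffices to work with $\pi'$. By Proposition~\ref{P:unram}, the local theta lift $\theta(\pi'_v)$ on $\GSO_8(k_v)$ is unramified with Satake parameter $\iota(c(\pi'_v))$; by Proposition~\ref{P:compa}, $\Theta(\pi')_v \simeq \theta(\pi'_v)$ for almost all $v$, but since we want \emph{all} unramified $v$ we instead argue directly: $\Theta(\pi')$ is globally generic by~\cite{GRS1}, hence has a unique generic local constituent at each place, which at unramified $v$ must be the unramified $\theta(\pi'_v)$ by the compatibility of theta with unramified data. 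Pulling back by $f_2$ and applying Proposition~\ref{P:isogeny}~(ii), the lift to $\SO_8$ is unramified at $v$ with Satake parameter $f_2^\vee(\iota(c(\pi'_v)))$, which by the discussion in~\S\ref{SS:n=3} equals $\mathrm{spin}(c(\pi_v))$ viewed in $\SO_8(\C)$. Finally, the $\SO_8 \to \GL_8$ lift of~\cite{CKPSS} is strong at unramified places (it is constructed via Converse Theorems matching $L$- and $\varepsilon$-factors at \emph{all} places, and is unramified where the input is), giving $c(\sigma_v) = \mathrm{std}(\mathrm{spin}(c(\pi_v))) = \mathrm{spin}(c(\pi_v))$ in $\mathfrak{gl}_8$.

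For part~(ii), the argument is parallel but at an Archimedean place, replacing Satake parameters by infinitesimal characters. The infinitesimal character is an invariant of near-equivalence-at-$v$ in a suitable sense, but more directly: the Archimedean theta lift and the isogeny pullbacks of Proposition~\ref{P:isogeny} act on infinitesimal characters via the corresponding maps of dual Lie algebras (this is standard for theta — the correspondence of infinitesimal characters under the local Archimedean theta lift for $\Sp_{2n} \times \O_{2m}$ with $m$ large is well known — and trivial for isogenies and automorphisms). Since the $\SO_8 \to \GL_8$ functorial lift of~\cite{A} or~\cite{CKPSS} matches Archimedean $L$-parameters up to the standard embedding, we get $c(\sigma_v) = \mathrm{d}\,\mathrm{spin}(c(\pi_v))$. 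I would state the theta-correspondence-of-infinitesimal-characters input as a cited fact rather than reprove it.

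For part~(iii), assume the A-parameter $\Psi$ of $\pi|_{\Sp_6}$ does not contain $\mathbf 1$. From the proof of Theorem~\ref{thm:weakspinlift}, the pullback $f_1^*(\Theta(\pi'))$ to $\SO_8$ has A-parameter $\Psi' = \Psi \boxplus \mathbf 1$; applying triality $\theta^\vee$ does not change the \emph{multiset} of the lift as an $\SO_8$-valued parameter only up to the triality action, so I must instead identify the A-parameter of $f_2^*(\Theta(\pi'))$ as a parameter into $\SO_8(\C) \to \GL_8(\C)$, which is precisely $\mathrm{spin} \circ \phi_\pi$ where $\phi_\pi$ is the $\Spin_7(\C)$-valued A-parameter. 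The genericity hypothesis on $\Psi$ forces $\phi_\pi$ (hence $\mathrm{spin}\circ\phi_\pi$) to be \emph{tempered}, i.e. trivial on the Arthur $\SL_2$; the hypothesis that $\mathbf 1 \not\subset \Psi$ ensures $\mathrm{spin}\circ\phi_\pi$ does not contain the trivial character (one checks on the level of the $7$-dimensional standard representation of $\Spin_7$, using that $\mathrm{spin}|_{\mathrm{std}}$-branching has no trivial summand unless $\mathrm{std}$ does — the $8$-dimensional spin representation of $\Spin_7$ restricted to any proper Levi decomposes without trivial constituents exactly when the standard $7$-dimensional one does). Hence $\sigma$, being the $\GL_8$-lift of a tempered parameter with no trivial constituent, is an isobaric sum of distinct self-dual cuspidal representations, and each $\sigma_i$ is orthogonal since the spin representation of $\Spin_7$ is orthogonal (it preserves a nondegenerate symmetric form), so $\phi_\pi$ lands in $\SO_8(\C) = \SO_8(\C)$ and each irreducible summand inherits orthogonal type. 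The cuspidality of each $\sigma_i$ and the multiplicity-one statement $\sigma_i \neq \sigma_j$ follow from the classification of the discrete automorphic spectrum of $\GL_{n_i}$ together with the fact that a tempered isobaric sum with a repeated summand would fail to be the image of an \emph{elliptic} tempered parameter.

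\textbf{Main obstacle.} The delicate point is part~(iii): pinning down that the $\GL_8$-parameter attached to $\sigma$ is genuinely $\mathrm{spin}\circ\phi_\pi$ with $\phi_\pi$ the (tempered) A-parameter of $\pi|_{\Sp_6}$ — and not merely its near-equivalence class — and deducing the precise shape (no trivial constituent, distinct orthogonal cuspidal summands) purely from representation theory of $\Spin_7(\C)$ together with the constraint ``$\mathbf 1 \not\subset \Psi$''. The subtlety is that triality mixes the standard and half-spin parameters, so one cannot naively read off the decomposition of $\mathrm{spin}\circ\phi_\pi$ from that of $\mathrm{std}\circ\phi_\pi = \Psi$; one needs the explicit branching $\Spin_7 \hookrightarrow \Spin_8$ and the $\mathrm{SL}_2\times\mathrm{SL}_2\times\mathrm{SL}_2 \subset \Spin_8$-type computations from~\S\ref{SS:spingp} to control which partitions can occur. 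Parts~(i) and~(ii) are essentially bookkeeping with results already in hand.
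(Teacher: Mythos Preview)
Your approach to parts (i) and (ii) is essentially the same as the paper's: track the chain $\pi \leadsto \pi' \leadsto \Theta(\pi') \leadsto f_2^*(\Theta(\pi')) \leadsto \sigma$ and check that each step preserves unramified data (via Lemma~\ref{L:generic}(ii), Propositions~\ref{P:unram}, \ref{P:compa}, and~\ref{P:isogeny}, and the known properties of the $\SO_8 \to \GL_8$ transfer), respectively infinitesimal characters (the paper cites \cite{Pr, Li} for the Archimedean theta step, which is your ``cited fact'').

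Your argument for part (iii), however, has a genuine gap. You attempt to deduce the shape of $\sigma$ directly from the representation theory of $\Spin_7(\C)$, in particular asserting that ``$\mathbf 1 \not\subset \Psi$ ensures $\mathrm{spin}\circ\phi_\pi$ does not contain the trivial character.'' This is simply false: when $\pi$ is of type ${\rm G}_2$ (see Proposition~\ref{prop:critG2} and Proposition~\ref{prop:spincusp}), the parameter $\Psi = {\rm std}\circ\phi_\pi$ is cuspidal (so certainly $\mathbf 1 \not\subset \Psi$) and yet $\mathrm{spin}\circ\phi_\pi = \mathbf 1 \oplus \Psi$. More broadly, the conclusion of (iii) does \emph{not} say that $\mathbf 1$ cannot occur among the $\sigma_i$ --- the trivial representation of $\GL_1$ is a perfectly good self-dual cuspidal of orthogonal type. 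What (iii) asserts is that the $\sigma_i$ are cuspidal, self-dual, orthogonal, and \emph{pairwise distinct}, i.e.\ that the A-parameter of $f_2^*(\Theta(\pi'))$ on $\SO_8$ is discrete generic. This multiplicity-freeness is not something you can read off from branching rules for $\Spin_7 \subset \Spin_8$ alone.

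The paper's proof of (iii) sidesteps all of this. The key point you are missing is that, by~\cite{GRS1}, the hypothesis $\mathbf 1 \not\subset \Psi$ is exactly what ensures that the global theta lift $\Theta(\pi')$ on $\PGSO_8$ is \emph{cuspidal} (not merely nonzero and generic). Cuspidality is preserved under the isogeny pullback $f_2^*$, so $f_2^*(\Theta(\pi'))$ contains a globally generic cuspidal representation of $\SO_8$. By Arthur's classification \cite{A} (or \cite{CKPSS, GRS2}), a globally generic cuspidal representation of $\SO_8$ has a generic discrete A-parameter, which by definition means its transfer to $\GL_8$ is a multiplicity-free isobaric sum of self-dual cuspidal representations of orthogonal type. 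No branching computations are needed; the role of the hypothesis $\mathbf 1 \not\subset \Psi$ is purely to guarantee cuspidality of the theta lift.
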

 
 \vskip 5pt
 \begin{proof}
 (i)  Consider the globally generic $\pi'$ provided by Lemma \ref{L:generic}(i) and used in the proof of Theorem \ref{thm:weakspinlift}, so that $\pi$ and $\pi'$ both belong to the global A-packet $\tilde{\mathcal{A}}_{\Psi_{\flat}}$.   By Lemma \ref{L:generic}(ii), 
 $\pi'_v \simeq \pi_v$ is unramified and generic. It follows by Proposition \ref{P:unram} and Proposition \ref{P:compa} that 
 the global theta lift $\Theta(\pi')$ is  unramified at the place $v$ with 
 \[ c(\Theta(\pi')_v)  = c(\theta(\pi_v)) =  \iota(c(\pi_v)),\]
where $\iota: \Spin_7(\C) \hookrightarrow \Spin_8(\C)$ is as in (\ref{D:spin}).
In the proof of Theorem \ref{thm:weakspinlift}, the weak lift $\sigma$ on $\GL_8$ is obtained  by considering $f_2^*(\Theta(\pi'))$  on $\SO_8$  followed by the Arthur transfer ${\rm std}_*$ from $\SO_8$ to $\GL_8$. Both of these respect unramified representations (or rather unramified L-packets) with the expected effect on their Satake parameters: for $f_2^*$ this follows from Proposition~\ref{P:isogeny} (i), and for ${\rm std}_*$ from \cite{A}. Thus we deduce that $\sigma_v$ is unramified  with 
\[  c(\sigma_v) = {\rm std}(f_2^{\vee}( \iota (c(\pi_v)))) = {\rm spin}(c(\pi_v)), \]
 as desired.
  
 \vskip 10pt
 
 \noindent (ii) For an Archimedean place $v$, note that the elements in the local L-packet $\tilde{\mathcal{A}}_{\Psi_{\flat,v}}$ all have the same infinitesimal character.
 In particular, $c(\pi_v) = c(\pi'_v)$.  We also know that  $c(\theta(\pi'_v)) = d\iota(c(\pi'_v))$ by the correspondence of infinitesimal character under local theta correspondence (see \cite{Pr, Li}).  Since infinitesimal characters behave in the expected way under pulling back by an isogeny and the Arthur transfer from $\SO_8$  to $\GL_8$ (by \cite{A}), we deduce the desired statement in (ii).
 
 \vskip 5pt
 
 \noindent (iii) If the A-parameter of $\pi|_{\Sp_6}$ does not contain the trivial representation ${\bf 1}$, then the nonzero global theta lift $\Theta(\pi')$ in the proof of Theorem \ref{thm:weakspinlift} is a globally generic cuspidal representation of $\PGSO_8$ (by \cite{GRS1}).  Under pullback by $f_2$, $f_2^*(\Theta(\pi'))$  contains a globally generic cuspidal representation of $\SO_8$ and hence has a  generic discrete A-parameter, i.e. its transfer to $\GL_8$ is a multiplicity-free isobaric sum of self-dual cuspidal representations of orthogonal type (by \cite{A} or \cite{CKPSS, GRS2}).
   \end{proof}
 \vskip 5pt
 
 \begin{remark} {\rm If the A-parameter $\Psi$ of $\pi|_{\Sp_6}$ contains the trivial representation ${\bf 1}$, say $\Psi = {\bf 1} \boxplus \Psi'$, then $\Theta(\pi')$ is globally generic but not cuspidal (by \cite{GRS1}). Indeed, under pullback by $f_1$, $f_1^*(\Theta(\pi'))$ has A-parameter ${\bf 1} \boxplus \Psi = {\bf 1} \boxplus {\bf 1} \boxplus \Psi'$, which contains the trivial representation ${\bf 1}$ two times and hence is not a discrete A-parameter. In this case, the pullback $f_2^*(\Theta(\pi'))$ under $f_2$ has A-parameter of the form $\tau \boxplus \tau^{\vee}$ for a cuspidal representation $\tau$ of $\GL_4$. Since we do not need this case later on, we will skip the details here.}
 \end{remark} 
 \vskip 5pt
 
 In \S \ref{S:Siegel}, we will consider the case of cuspidal representations of $\PGSp_6$ associated to  holomorphic Siegel cusp forms of level 1.  In that case, we shall give a full determination of the possible shapes of the A-parameter of the Spin lift of $\pi$.  


  \vskip 10pt
  
 \section{\bf Variants} 
We may exploit the above combination of similitude theta correspondence  and triality  in a couple of other situations, providing cheap constructions of interesting 
nontempered discrete automorphic representations of $\PGSO_8$. More precisely, we may consider the global theta lifting associated to the tower:

 \[
\xymatrix@R=10pt{
 \PGSp_6  \ar@{-}[rd]  & \\
\PGSp_4  \ar@{-}[r]  &  \PGSO_8  \\
\PGSp_2   \ar@{-}[ru]   &    
}
\]
We have considered the theta lifting from $\PGSp_6$ to $\PGSO_8$ in the previous section.  Let us consider the theta lifting from the two smaller symplectic similitude groups in this section.
\vskip 5pt

\subsection{\bf Theta lifting} 
The isometry  theta lifting from $\Sp_2$ to $\SO_8$  takes a cuspidal representation $\pi$ of $\Sp_2$ to the near equivalence class on $\SO_8$ given by the nontempered A-parameter
\[   \Psi_{\pi}   \boxplus    S_5,   \]
where $\Psi_{\pi}$ is the L-parameter of $\pi$, which is a self-dual cuspidal representation of $\GL_3$, and $S_k$ will denote the $k$-dimensional irreducible representation of the Arthur $\SL_2$. Similarly, the isometry theta correspondence from $\Sp_4$ to $\SO_8$ takes a cuspidal representation $\pi$ of $\Sp_4$ to the near equivalence class given by the A-parameter
\[   \Psi_{\pi} \boxplus S_3, \]
where now $\Psi_{\pi}$ is a self-dual automorphic representation of $\GL_5$  (the A-parameter of $\pi$).

\vskip 5pt

In both these cases, we see that the Hecke-Satake family of $\Theta(\pi)$ (or equivalently its A-parameter)
is valued in $\SO_3(\C) \times \SO_5(\C) \subset \SO_8(\C)$.  If one considers the similitude theta lifting from $\PGSp_2$ or $\PGSp_4$ to $\PGSO_8$, then  as we show in Appendix A (Proposition \ref{P:unram2}), the corresponding functoriality is given by the top row of the commutative diagram:
\[  \begin{CD}
\SL_2(\C) \times \Sp_4(\C) @= \Spin_3(\C) \times \Spin_5(\C)  @>j>> \Spin_8(\C) \\
@.   @VVV  @VVf_1^{\vee}V  \\
 @.  \SO_3(\C) \times \SO_5(\C)  @>>> \SO_8(\C).  \end{CD} \]
More precisely, 
\begin{itemize}
\item if $\pi$ is a cuspidal representation of $\PGSp_2$ with Hecke-Satake family 
\[  c(\pi) = \{ c({\pi_v})\}_{v \notin S}  \subset \Spin_3(\C),\]
 and $c({\rm triv}) \subset \Spin_5(\C)$ is the Hecke-Satake family for the trivial  representation of $\PGSp_4$, then the Hecke-Satake family for the theta lift of $\pi$ to $\PGSO_8$ is
\[  \{ j  (c(\pi_v), c({\rm triv}_v))  \}_{v \notin S}  \subset  \Spin_8(\C). \]

\vskip 5pt

\item if $\pi$ is a cuspidal representation of $\PGSp_4$ with Hecke-Satake family $c(\pi) \subset \Spin_5(\C)$ and $c({\rm triv}) \subset \Spin_3(\C)$ is the Hecke-Satake family for the trivial representation of $\PGSp_2$, then the Hecke-Satake family of the theta lift of $\pi$ to $\PGSO_8$ is
\[   \{ j  (c({\rm triv}_v), c(\pi_v))\}_{v \notin S}  \subset  \Spin_8(\C). \]
\end{itemize}
 In other words, the Hecke-Satake family of the similitude theta lift is contained in the subgroup $j(\Spin_3(\C) \times \Spin_5(\C)) \subset \Spin_8(\C)$.
 \vskip 5pt
 
Further, as we are working with the split $\PGSO_8$, the global theta lift $\Theta(\pi)$  of a cuspidal representation $\pi$  of $\PGSp_2$ or $\PGSp_4$ is nonzero, because one is in the so-called stable range. Moreover, $\Theta(\pi)$ is an irreducible square-integrable automorphic representation (typically not cuspidal).
For the above statements, see \cite[Prop. 3.2]{G} and the references therein.
 Hence, in these cases, we do not need to impose further conditions (such as the restriction of $\pi$ to the isometry groups having a generic A-parameter) to ensure the nonvanishing of the global theta lifting. 
\vskip 5pt

\subsection{\bf Application of triality}
Now   observe that
\[  f_2^{\vee} \circ j  =  {\rm spin}_3 \boxtimes {\rm spin}_5  \]
where the RHS denotes the Spin representations of $\Spin_3(\C)$ and $\Spin_5(\C)$ respectively.  These are nothing but the standard representations of $\SL_2(\C)$ and 
$\Sp_4(\C)$ respectively. By this discussion (and Proposition \ref{P:unram2}), we deduce the following result, which extends \cite[Thm. 2.2 and Thm. 2.3(i)]{CL}:

\vskip 10pt

\begin{thm}  \label{T:ikeda}
(i) If $\pi$ is a cuspidal representation of $\PGL_2 \simeq \PGSp_2$ and  $\Theta(\pi)$ is its global theta lift to  the split $\PGSO_8$ (which is nonzero), then $f_2^*(\Theta(\pi))$  is a square integrable automorphic representation with 
nontempered A-parameter 
\[   \Psi_{\pi}  \boxtimes S_4  \]
where $\Psi_{\pi}=\pi$ denotes the A-parameter of $\pi$.  Moreover, if $\pi_v$ is unramified, then $[f_2^*(\Theta(\pi))]_v$ is also unramified, whose L-parameter (or Satake parameter) is the L-parameter associated to $\Psi_{\pi} \boxtimes S_4$.  
\vskip 5pt

(ii) If $\pi$ is a cuspidal representation of $\PGSp_4$ and $\Theta(\pi)$  is its global theta lift to the split  $\PGSO_8$ (which is nonzero), then $f_2^*(\Theta(\pi))$ 
is a square integrable automorphic representation with nontempered  A-parameter 
\[   \Psi_{\pi} \boxtimes  S_2, \]
where $\Psi_{\pi}$ denotes the A-parameter of $\pi$ viewed as a representation of ${\rm PGSp}_4 \simeq {\rm SO}_5$. Moreover, if $\pi_v$ is unramified, then $[f_2^*(\Theta(\pi))]_v$ is also unramified, whose L-parameter (or Satake parameter) is the L-parameter associated to $\Psi_{\pi} \boxtimes S_2$. 
\end{thm}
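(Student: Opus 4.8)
The plan is to observe that Theorem~\ref{T:ikeda} follows from exactly the same mechanism already used to prove Theorem~\ref{thm:weakspinlift} and Theorem~\ref{T:strong}, now applied to the smaller members $\PGSp_2$ and $\PGSp_4$ of the theta tower over $\PGSO_8$. First I would recall from the previous subsection that for $\pi$ cuspidal on $\PGSp_2$ (resp.\ $\PGSp_4$) the similitude theta lift $\Theta(\pi)$ to the split $\PGSO_8$ is nonzero and is an irreducible square-integrable automorphic representation, since one is in the stable range (this is \cite[Prop.\ 3.2]{G}); so there is no genericity hypothesis to worry about. Then I would invoke Proposition~\ref{P:unram2} (from Appendix~A) together with Proposition~\ref{P:compa} to identify the Hecke-Satake family of $\Theta(\pi)$ with the image of $c(\pi)$ under the map $j : \Spin_3(\C) \times \Spin_5(\C) \to \Spin_8(\C)$, where in the $\PGSp_2$ case the second factor is plugged with the Satake family $c(\mathrm{triv})$ of the trivial representation of $\PGSp_4$, and symmetrically in the $\PGSp_4$ case. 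Concretely, $f_1^\vee \circ j$ composed with $\mathrm{std}$ gives the A-parameter $\Psi_\pi \boxplus S_5$ on $\SO_8$ in the first case and $\Psi_\pi \boxplus S_3$ in the second, matching the isometry-theta description recalled above.

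The crux is then the triality computation $f_2^\vee \circ j = \mathrm{spin}_3 \boxtimes \mathrm{spin}_5$, which is purely a statement about the $\SO_8^3$-model of $\Spin_8$ from \S\ref{SS:VQ}: the embedding $j$ is built so that $\rho_1 = f_1^\vee$ restricts on $\Spin_3 \times \Spin_5$ to $\mathrm{std}_3 \oplus \mathrm{std}_5$, while $\rho_2 = f_2^\vee$ restricts to the tensor product of the spin representations by the branching rules for $S_V \circ \iota$ listed in \S\ref{SS:spingp} (the case $\dim V = 8$, $\dim V_1 = 3$ odd, $\dim V_2 = 5$ odd). Since $\mathrm{spin}_3 : \Spin_3(\C) = \SL_2(\C) \to \GL_2(\C)$ is the standard representation and $\mathrm{spin}_5 : \Spin_5(\C) = \Sp_4(\C) \to \GL_4(\C)$ is the standard representation, we get $f_2^\vee \circ j = \mathrm{std}_2 \boxtimes \mathrm{std}_4$. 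Plugging in $c(\mathrm{triv})$ for the second factor in the $\PGSp_2$ case: the trivial representation of $\PGSp_4$ has Satake family corresponding to the principal $\SL_2$ inside $\Sp_4(\C)$, i.e.\ $S_4$ as a $4$-dimensional representation of the Arthur $\SL_2$, so the A-parameter of $f_2^*(\Theta(\pi))$ becomes $\Psi_\pi \boxtimes S_4$. Symmetrically, in the $\PGSp_4$ case the trivial representation of $\PGSp_2 \simeq \PGL_2$ has Satake family the principal $\SL_2$ in $\SL_2(\C)$, i.e.\ $S_2$, giving $\Psi_\pi \boxtimes S_2$.

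Finally I would assemble the automorphic statement. Since $\Theta(\pi)$ is square-integrable on $\PGSO_8$, its pullback $f_2^*(\Theta(\pi))$ along $f_2 : \SO_8 \to \PGSO_8$ is square-integrable on $\SO_8$ by Proposition~\ref{P:isogeny}; being square-integrable with the computed Hecke-Satake family, it lies in the near-equivalence class attached to the A-parameter $\Psi_\pi \boxtimes S_4$ (resp.\ $\Psi_\pi \boxtimes S_2$), which is a genuine (non-tempered, discrete) A-parameter for $\SO_8$ because $\Psi_\pi$ is a self-dual cuspidal representation of $\GL_2$ (resp.\ $\GL_4$) of the correct parity. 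The unramified assertion is immediate: at a place $v$ where $\pi_v$ is unramified, Proposition~\ref{P:unram2} gives $\Theta(\pi)_v$ unramified, Proposition~\ref{P:isogeny}(i) gives $[f_2^*(\Theta(\pi))]_v$ unramified, and chasing Satake parameters through $j$, $f_2^\vee$ and $\mathrm{std}$ yields the parameter associated to $\Psi_\pi \boxtimes S_4$ (resp.\ $S_2$), exactly as in the proof of Theorem~\ref{T:strong}(i).

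I expect the only genuinely delicate point to be the identification, in Appendix~A, that the \emph{similitude} theta correspondence $\PGSp_2 \to \PGSO_8$ and $\PGSp_4 \to \PGSO_8$ is governed at the level of dual groups by the embedding $j$ lifting $\SO_3 \times \SO_5 \hookrightarrow \SO_8$ through $f_1^\vee$ — i.e.\ Proposition~\ref{P:unram2} itself — and that the global theta lift remains irreducible and square-integrable upon restriction to $\SO_8$ from $\PGSO_8$; the triality bookkeeping $f_2^\vee \circ j = \mathrm{spin}_3 \boxtimes \mathrm{spin}_5$ and the translation of the trivial representation's Satake data into the Arthur $S_k$-factor are then formal. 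Everything else is a direct transcription of the $\PGSp_6$ argument of \S\ref{subsect:spinliftingnequals3}.
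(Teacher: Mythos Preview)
Your proposal is correct and follows essentially the same approach as the paper: the theorem is deduced directly from the discussion in \S4.1--\S4.2 (stable-range nonvanishing and square-integrability of $\Theta(\pi)$, Proposition~\ref{P:unram2} and Proposition~\ref{P:compa} for the Hecke--Satake family via $j$, the identity $f_2^{\vee}\circ j = {\rm spin}_3 \boxtimes {\rm spin}_5$, and Proposition~\ref{P:isogeny} for the pullback). Your write-up is in fact more explicit than the paper's in justifying $f_2^{\vee}\circ j = {\rm spin}_3 \boxtimes {\rm spin}_5$ via the branching rules of \S\ref{SS:spingp} and in explaining why $c({\rm triv})$ contributes the $S_4$ (resp.\ $S_2$) factor through the principal $\SL_2$.
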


\vskip 5pt

\subsection{\bf Ikeda's lifting}
In the context of Theorem 4.1 (i),
we may further consider the global Theta lift $\Pi$ 
of $f_2^*(\Theta(\pi))$ to ${\rm Sp}_8$. 
If nonzero, it provides a rather cheap construction,
 of some automorphic representation of ${\rm Sp}_8$ 
with standard A-parameter $(\Psi_\pi \boxtimes S_4) \boxplus {\bf 1}$ (not relying on \cite{A} nor on \cite{CKPSS}). 
\vskip 5pt

Better,  in the construction of $\Theta(\pi)$, let us replace
the split ${\rm GSO}_8$ by ${\rm GSO}(V)$, 
where $V$ is an octonion $F$-algebra whose set $\Sigma$ 
of (necessarily real) non-split places is nonempty. 
By \cite{R2}, $\Theta(\pi)$ is nonzero if, and only if, 
$\pi_v$ is a holomorphic discrete series of weight $\geq 4$ for all $v \in \Sigma$. 
The triality automorphism (hence $f_2$) still exists on ${\rm PGSO}(V)$.  
Assuming again that the theta lift $\Pi$ of $f_2^*(\Theta(\pi))$ to ${\rm Sp}_8$ is nonzero,  
then $\Pi_v$ is a holomorphic discrete series for all $v \in V$, 
providing an alternative construction of the liftings in \cite{I} and \cite{IY} 
in the special case of ${\rm Sp}_8$ (under slightly different assumptions). 
\vskip 5pt

This idea was used in \cite[\S 5.4]{CL} to give an elementary proof 
that the Schottky form on ${\rm Sp}_8(\Z)$ is an Ikeda lift 
of the discriminant cusp form of weight $12$ on ${\rm SL}_2(\Z)$. 
For a general $\pi$, the non-vanishing of $\Pi$ may be addressed using, for example,  \cite[Cor. 7.9(c)]{GT3}.
   
\vskip 15pt
\section{\bf Rankin-Selberg Lifting $\PGSp_4 \times \PGL_2 \rightarrow \SO_8$}
The examples considered in the previous section all arise through an application of the triality automorphism to a nontempered  A-parameter factoring through 
the map
\[  \begin{CD}
  \Spin_3(\C) \times \Spin_5(\C) @>j>> \Spin_8(\C) \\
  @VVV  @VVV \\
  \SO_3(\C) \times \SO_5(\C)  @>j^{\flat}>>  \SO_8(\C). \end{CD} \]
The map $j$ of dual groups lies over the corresponding map $j^{\flat}$ which gives the twisted endoscopic transfer $\Sp_2 \times \Sp_4 \longrightarrow \SO_8$ (associated to an outer automorphism of order $2$) established in Arthur's work \cite{A}. In \cite{X1,X2}, Xu has constructed  this endoscopic transfer at the level of similitude groups associated to $j$. We summarise his results in this particular case:
\vskip 5pt

\begin{thm}
Let $\pi$ be a cuspidal representation of $\PGL_2 = \PGSp_2$ and let $\sigma$ be a cuspidal representation of $\PGSp_4$ with generic A-parameter. Then the endoscopic lifting of $(\pi , \sigma)$ associated to $j$ exists. 
\end{thm}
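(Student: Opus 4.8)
The plan is to attach to $(\pi,\sigma)$ a global A-parameter valued in $\Spin_8(\C)$ through $j$, and then to realize it automorphically on $\PGSO_8$ by invoking the endoscopic classification for similitude groups of \cite{X1,X2} (which in turn relies on Arthur's results \cite{A} for $\SO_8$ and $\SO_5$). Since $\pi$ is cuspidal on $\PGL_2$ it is globally generic, and its L-parameter $\Psi_\pi$ is a $2$-dimensional symplectic, i.e.\ $\Spin_3(\C)=\SL_2(\C)$-valued, parameter; composed with $\rho : \Spin_3(\C)\to\SO_3(\C)$ it gives $\Psi_\pi^\flat={\rm Sym}^2\pi$, a self-dual orthogonal automorphic representation of $\GL_3$. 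By the genericity hypothesis and Lemma~\ref{L:generic}, $\sigma$ has a (necessarily discrete) generic A-parameter $\Psi_\sigma$ valued in $\Sp_4(\C)=\Spin_5(\C)$, and $\Psi_\sigma^\flat:={\rm std}\circ\Psi_\sigma$ is a multiplicity-free self-dual orthogonal automorphic representation of $\GL_5$. I would then form
\[ \Psi\;=\;j\circ(\Psi_\pi\times\Psi_\sigma), \]
which is trivial on the Arthur $\SL_2$; thus $\Psi$ is a \emph{generic} A-parameter of $\PGSO_8$ factoring through $j$, and $f_1^\vee\circ\Psi$ is the $\SO_8$-parameter $\Psi^\flat=\Psi_\pi^\flat\boxplus\Psi_\sigma^\flat$ (a $3+5$ decomposition).

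In the main case $\Psi^\flat$ is multiplicity-free, hence an elliptic generic A-parameter for $\SO_8$ relative to its order-two outer involution. Here I would invoke Arthur's classification \cite{A} for the associated global A-packet of $\SO_8$: it meets the discrete automorphic spectrum, and since $\Psi^\flat$ is generic it contains a distinguished globally generic member (as in Lemma~\ref{L:generic}, via the automorphic descent of Ginzburg--Rallis--Soudry) with Hecke--Satake family $f_1^\vee(j(c(\pi),c(\sigma)))$. I would then apply Xu's refinement \cite{X1,X2} in the similitude setting to promote this to a discrete automorphic representation $\Pi$ of $\PGSO_8$ (central characters being trivial throughout) whose restriction to $\SO_8$ contains that member; hence $c(\Pi)=j(c(\pi),c(\sigma))$, and $\Pi$ is the asserted endoscopic lift. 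Pulling $\Pi$ back along any $f_i : \SO_8\to\PGSO_8$ via Proposition~\ref{P:isogeny} then yields the corresponding lift to $\SO_8$ named in the section title, and (through $f_2$) the associated tensor-product lift to $\GL_8$.

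In the remaining case some irreducible constituent occurs in both $\Psi_\pi^\flat$ and $\Psi_\sigma^\flat$, so that $\Psi^\flat$ (and hence $\Psi$) is non-discrete and the lift lies in the continuous spectrum. Here I would construct $\Pi$ directly as an irreducible constituent of a representation of $\PGSO_8(\A)$ parabolically induced from the Levi subgroup whose dual is the identity component of the centralizer of $\Psi$ in $\Spin_8(\C)$, using Langlands' theory of Eisenstein series, and read off $c(\Pi)=j(c(\pi),c(\sigma))$ from the inducing data; this case needs neither \cite{A} nor \cite{X1,X2}.

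The hard part will be the bookkeeping needed to transit from Arthur's classification for the split orthogonal groups $\SO_8$ and $\SO_5$ to the similitude groups $\PGSO_8$ and $\PGSp_4$ in Xu's framework: one must identify the twisted endoscopic datum $\SO_3(\C)\times\SO_5(\C)\subset\SO_8(\C)$ with $j$, control the $\O_8$-versus-$\SO_8$ ambiguity inherent in a parameter that is a sum of odd-dimensional orthogonal pieces, and verify that the global multiplicity formula for the A-packet $\widetilde{\mathcal{A}}_\Psi$ of $\PGSO_8$ produces a nonzero term — which, for generic $\Psi$, reduces once more to the existence of the generic member. Once these points are settled, the conclusion follows.
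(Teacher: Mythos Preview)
Your sketch is correct in outline and is essentially what the paper does: it states the theorem as a direct consequence of Bin Xu's endoscopic classification for similitude groups \cite{X1,X2}, which lifts Arthur's results \cite{A} from $\SO_8$ to $\PGSO_8$, without supplying further argument. Your proposal simply unpacks what invoking Xu entails (building the $\Spin_8$-valued parameter via $j$, descending to the $3+5$ orthogonal parameter on $\SO_8$, applying Arthur's packet construction, and lifting back up via Xu), together with a parabolic-induction treatment of the non-discrete case; the paper absorbs all of this into the single citation. One minor point: your appeal to Lemma~\ref{L:generic} is unnecessary here, since the hypothesis already gives $\sigma$ a generic A-parameter and Xu's input is the parameter itself rather than a globally generic representative.
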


\vskip 10pt

Let $\Pi$ be a cuspidal representation of $\PGSO_8$ which is a weak lifting of $(\pi, \sigma)$  via $j$. 
Consider now the pullback of $\Pi$ to $\SO_8$ via $f_2$, followed by the transfer ${\rm std}_*$ to $\GL_8$. Then we have:
\vskip 5pt

\begin{cor}  \label{C:tensor}
On $\GL_8$, the representation ${\rm std}_*(f_2^*(\Pi))$  is a weak lift of $\pi \boxtimes \sigma$ relative to the tensor product map
\[  \begin{CD}
\SL_2(\C) \times \Sp_4(\C) @>\boxtimes>> \SO_8(\C)  @>{\rm std}>>   \GL_8(\C).  \end{CD} \]
\end{cor}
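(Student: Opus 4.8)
The plan is to unwind the definitions of the various functorialities and check that the Hecke--Satake families match at almost all places. Concretely, let $\Pi$ be the cuspidal representation of $\PGSO_8$ which is a weak lift of $(\pi,\sigma)$ via $j$, so that for almost all $v$ the Satake parameter $c(\Pi_v)$ is the image of $(c(\pi_v),c(\sigma_v)) \in \Spin_3(\C) \times \Spin_5(\C)$ under $j$, viewed in $\PGSO_8^\vee = \Spin_8(\C)$. Pulling $\Pi$ back to $\SO_8$ along $f_2 : \SO_8 \to \PGSO_8$ is an instance of the simple functorial lifting of Proposition~\ref{P:isogeny}, so any irreducible constituent $\Pi'$ of $f_2^*(\Pi)$ satisfies $c(\Pi'_v) = f_2^\vee(c(\Pi_v))$ for almost all $v$, where $f_2^\vee : \Spin_8(\C) \to \SO_8(\C)$ is the half-spin map $f_1^\vee \circ \theta^\vee$ described in \S\ref{SS:n=3}. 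Finally, transferring along ${\rm std} : \SO_8(\C) \to \GL_8(\C)$ via \cite{A} or \cite{CKPSS} produces an automorphic representation ${\rm std}_*(\Pi')$ of $\GL_8$ whose Hecke--Satake family is ${\rm std}(c(\Pi'_v))$ for almost all $v$.

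Chaining these three steps, the Hecke--Satake family of ${\rm std}_*(f_2^*(\Pi))$ is, at almost all $v$, equal to
\[
{\rm std}\bigl(f_2^\vee\bigl(j(c(\pi_v),c(\sigma_v))\bigr)\bigr).
\]
So the statement reduces to the purely representation-theoretic identity
\[
{\rm std} \circ f_2^\vee \circ j \;=\; \bigl(\,\text{std of } \SL_2\,\bigr) \boxtimes \bigl(\,\text{std of } \Sp_4\,\bigr)
\]
as $8$-dimensional representations of $\SL_2(\C) \times \Sp_4(\C) = \Spin_3(\C) \times \Spin_5(\C)$. But this is exactly the identity $f_2^\vee \circ j = {\rm spin}_3 \boxtimes {\rm spin}_5$ already recorded in \S\ref{T:ikeda} (the ``Application of triality'' discussion preceding Theorem~\ref{T:ikeda}), together with the elementary observation that ${\rm spin}_3$ is the standard representation of $\SL_2(\C)$ and ${\rm spin}_5$ is the standard representation of $\Sp_4(\C)$, and that ${\rm std} : \SO_8 \to \GL_8$ just remembers the underlying $8$-dimensional space. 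Hence $c(\sigma')_v = ({\rm std}_2 \boxtimes {\rm std}_4)(c(\pi_v),c(\sigma_v))$ for almost all $v$, which is precisely the assertion that ${\rm std}_*(f_2^*(\Pi))$ is a weak lift of $\pi \boxtimes \sigma$ relative to $\boxtimes$ composed with ${\rm std}$.

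I do not expect a genuine obstacle here: the corollary is essentially a bookkeeping exercise that concatenates Theorem~5.1, Proposition~\ref{P:isogeny}, the Arthur/CKPSS transfer from $\SO_8$ to $\GL_8$, and the triality identity $f_2^\vee \circ j = {\rm spin}_3 \boxtimes {\rm spin}_5$. The only point requiring a modicum of care is to make sure one stays consistent about which of $f_1^\vee, f_2^\vee, f_3^\vee$ one uses and which half-spin representations of $\Spin_3 = \SL_2$ and $\Spin_5 = \Sp_4$ they restrict to --- i.e. the ``breaking of symmetry'' choices fixed in \S\ref{SS:VQ} and \S\ref{SS:n=3} --- but since all three maps differ by the triality automorphism, any consistent labelling gives the standard representations of $\SL_2$ and $\Sp_4$ on the nose. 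One should also note that $\Pi'$ is well-defined only up to the finite ambiguity of choosing a constituent of $f_2^*(\Pi)$, but all such constituents are $\PGSO_8$-conjugate and hence nearly equivalent, so the resulting weak lift on $\GL_8$ is unambiguous.
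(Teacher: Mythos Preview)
Your proof is correct and follows the same approach as the paper, which treats this corollary as an immediate consequence of Theorem~5.1, Proposition~\ref{P:isogeny}, the Arthur/CKPSS transfer, and the triality identity $f_2^{\vee} \circ j = {\rm spin}_3 \boxtimes {\rm spin}_5$ established just before Theorem~\ref{T:ikeda}. Your added remarks about the constituent ambiguity in $f_2^*(\Pi)$ and the consistency of labelling are valid and more explicit than what the paper records.
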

We may restate this as:
\vskip 5pt

\begin{thm} \label{T:RS}
If $\pi$ is a self-dual cuspidal representation of $\GL_2$ and $\sigma$ is a self-dual cuspidal representation of $\GL_4$ of symplectic type, then the weak Rankin-Selberg lifting $\pi \boxtimes \sigma$ exists as an automorphic representation of $\GL_8$. 
\end{thm}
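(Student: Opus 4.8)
The plan is to deduce Theorem~\ref{T:RS} directly from Corollary~\ref{C:tensor} by unwinding the identifications between similitude and isometry groups and their dual groups. First I would observe that a self-dual cuspidal representation $\pi$ of $\GL_2$ of symplectic type (the only possibility, since $\wedge^2$ of the standard representation of $\GL_2$ is the determinant) is, after a suitable twist, the functorial lift of a cuspidal representation of $\PGL_2 = \PGSp_2$ via the isomorphism $\PGL_2^\vee = \SL_2(\C)$; similarly a self-dual cuspidal $\sigma$ of $\GL_4$ of symplectic type corresponds, by the known functoriality for $\GSp_4$ (the theta/endoscopic transfer together with the classification of self-dual cuspidal representations of $\GL_4$, or simply the fact that $\Sp_4(\C) = \PGSp_4^\vee$), to a cuspidal representation of $\PGSp_4$ with a generic --- in fact discrete --- A-parameter. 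Thus the hypotheses of the preceding Theorem (Xu's endoscopic transfer) and of Corollary~\ref{C:tensor} are met.

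Next I would apply Corollary~\ref{C:tensor}: the representation ${\rm std}_*(f_2^*(\Pi))$ on $\GL_8$, where $\Pi$ is a weak lift of $(\pi,\sigma)$ to $\PGSO_8$ via $j$, is a weak functorial lift of $\pi \boxtimes \sigma$ along the composite
\[
\SL_2(\C) \times \Sp_4(\C) \overset{\boxtimes}{\longrightarrow} \SO_8(\C) \overset{{\rm std}}{\longrightarrow} \GL_8(\C).
\]
Since at almost all places the Satake parameter of this lift is, by construction, the tensor product of the Satake parameters of $\pi$ and $\sigma$ viewed in $\GL_2(\C) \times \GL_4(\C)$, this automorphic representation of $\GL_8$ is precisely the Rankin-Selberg product $\pi \boxtimes \sigma$. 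I would also note, for completeness, that one should match up the two formulations of Theorem~\ref{T:intro1}(ii) and Theorem~\ref{T:RS}: the map $\boxplus: \Sp_4(\C) \times \SL_2(\C) \to \SO_8(\C)$ in the introduction is the same as $j^\flat$ up to the order of the factors and the identifications $\SL_2(\C) = \SO_3(\C)^{\mathrm{sc}}$, $\Sp_4(\C) = \SO_5(\C)^{\mathrm{sc}}$.

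The only genuinely substantive point --- and the place where one must be slightly careful --- is that Corollary~\ref{C:tensor} produces a lift of $(\pi,\sigma)$ regarded as a representation of $\PGL_2 \times \PGSp_4$, whereas Theorem~\ref{T:RS} is phrased in terms of self-dual cuspidal representations of $\GL_2$ and $\GL_4$. The main obstacle is therefore the bookkeeping needed to pass between these two pictures: given self-dual cuspidal $\pi$ on $\GL_2$ and self-dual symplectic cuspidal $\sigma$ on $\GL_4$, one must produce genuine cuspidal representations on $\PGL_2$ and $\PGSp_4$ descending them (using that a self-dual cuspidal representation of $\GL_2$ with trivial central character descends to $\PGL_2$, and invoking the known inverse functoriality $\mathcal{A}_{\mathrm{cusp}}(\PGSp_4) \to \mathcal{A}(\GL_4)$ --- e.g.\ from \cite{A} --- whose image on the symplectic self-dual cuspidal locus is surjective), and then re-twist at the end so that the resulting $\GL_8$-representation has the exact Hecke-Satake family $c(\pi) \otimes c(\sigma)$. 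Since all these twisting and descent statements are standard, the argument is short once this dictionary is set up; I expect no analytic or trace-formula input beyond what is already invoked in Corollary~\ref{C:tensor} and the Theorem preceding it.
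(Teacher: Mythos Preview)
Your argument has a genuine gap: you assert that a self-dual cuspidal representation of $\GL_2$ is automatically of symplectic type, but this is false. Self-duality only forces the central character $\omega_\pi$ to satisfy $\omega_\pi^2=1$; when $\omega_\pi$ is a nontrivial quadratic character, $L(s,\pi,\wedge^2)=L(s,\omega_\pi)$ is holomorphic at $s=1$, so the pole of $L(s,\pi\times\pi)$ at $s=1$ must come from $L(s,\pi,\Sym^2)$ and $\pi$ is of \emph{orthogonal} type. Concretely, any dihedral $\pi={\rm AI}_{E/k}(\chi)$ with $\chi\chi^c=1$ but $\chi|_{\A_k^\times}\cdot\eta_{E/k}\ne 1$ gives such an example. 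In this case $\pi$ does not descend to $\PGL_2$, so Corollary~\ref{C:tensor} does not apply directly, and your proposed ``re-twist at the end'' does not help because no twist of $\pi$ has trivial central character.

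The paper treats the two cases separately. When $\pi$ is symplectic (equivalently $\omega_\pi=1$) your reduction to Corollary~\ref{C:tensor} is exactly the paper's argument. When $\pi$ is orthogonal, the paper instead writes $\pi={\rm AI}_{E/k}(\chi)$ and constructs the lift by hand as ${\rm AI}_{E/k}\bigl({\rm BC}_{E/k}(\sigma)\otimes\chi\bigr)$, using only base change and automorphic induction for $\GL_n$; no condition on $\sigma$ is needed in this branch. You should add this case to complete the proof.
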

\vskip 5pt

\begin{proof}
We examine the two cases:
\vskip 5pt

\begin{itemize}
\item if $\pi$ is of orthogonal type, then $\pi$ is obtained by automorphic induction from a Hecke character $\chi$ of a quadratic field extension $E$ of $k$, say 
\[  \pi  =  {\rm AI}_{E/k}( \chi). \]
One may consider the automorphic representation
\[   \Pi =  {\rm AI}_{E/k} ( {\rm BC}_{E/k}(\sigma) \otimes \chi), \]
where ${\rm BC}_{E/k}$ denotes the base change lifting with respect to $E/k$.  Then $\Pi$ is the Rankin-Selberg lift of $\pi \boxtimes \sigma$. Observe that no conditions need to be imposed on $\sigma$ here.
\vskip 5pt

\item the main case is when $\pi$ and $\sigma$ are both of symplectic type; this is precisely the case treated by Corollary \ref{C:tensor} above.

\end{itemize}
\end{proof}

\vskip 10pt

\section{\bf Spin Lifting for $\GSp_6$} \label{S:GSp(6)}
\label{Sect:spinGSp6etc}
In \S~\ref{S:PGSp(6)}, we have shown the Spin lifting from $\PGSp_6$ to $\GL_8$ via:
\vskip 5pt
\begin{itemize}
\item[(a)] similitude theta lifting to $\PGSO_8$, 
\item[(b)] followed by an application of the triality automorphism, before pulling back to (the standard) $\SO_8$ (or in one step, by pulling back via the nonstandard $f_2$), and
\item[(c)]  transferring from $\SO_8$ to $\GL_8$ via \cite{CKPSS} or \cite{A},
\end{itemize}
at least for cuspidal representations of $\PGSp_6$ with generic A-parameters when restricted to $\Sp_6$.
In this section, we first explain how to remove the trivial central character hypothesis and extend 
Theorem~\ref{thm:weakspinlift} to construct a spin lifting for cuspidal representations of $\GSp_6$ (with generic A-parameters when restricted to $\Sp_6$). This will be done in \S~\ref{sec:spinliftingwithcenter}, using a similar strategy as above. The first step is the same as in (a) above: using the global similitude theta lifting, we may lift a (globally generic) cuspidal representation of $\GSp_6$ to an automorphic representation of $\GSO_8$. For step (b), we need to replace $f_2$ with the natural isogeny 
\begin{equation} \label{E:tildef}
   \tilde{\rho}_2: \GSpin_8 \longrightarrow \GSO_8 \end{equation}
introduced in \S \ref{SS:VQ}. For the last step (c), we rather rely on the transfer from $\GSpin_8$ to $\GL_8$ via the results of Asgari-Shahidi \cite{AS}. 

\vskip 5pt

	The established spin lifting from $\GSp_6$ to $\GL_8$ also has interesting consequences to a certain lifting from $\PGL_7$ to $\SL_8$, discussed in the end of \S~\ref{sec:spinliftingwithcenter}. In the next two subsections \S~\ref{sec:RSliftingcenter} and \S~\ref{sec:yetanotherlifting}, we explore two other applications of the ideas above: first to a Rankin-Selberg (tensor-product) lifting from $\GL_2 \times \GSp_4$ to $\GL_8$ generalizing Theorem~\ref{T:intro1} (ii), and second to the study of the behavior of the endoscopic lifting ${\rm GSO}_4 \times {\rm GSO}_4 \longrightarrow {\rm GSO}_8$ {\it after applying triality}. The work of Bin Xu \cite{X1, X2} plays some role here.

\vskip 5pt The isogeny \eqref{E:tildef} naturally induces $f_2 : \SO_8 \rightarrow \PGSO_8$ (see the diagram~\eqref{E:diagr2tf2}), hence is related to triality since $f_2$ is; however, it is not the composition of a triality automorphism and a standard morphism contrary to $f_2$, since neither $\GSO_8$ nor $\GSpin_8$ do have a triality automorphism. We finally explain in \S~\ref{sect:shakespear} a way to 
restore an order $3$ symmetry in this picture by introducing in a certain larger group $\GGGSpin_8$, with derived subgroup ${\rm Spin}_8$ and center $\simeq \mathbb{G}_m^3$, over which a triality $\theta$ exists. We will show 
that $\tilde{\rho}_2$ naturally factors as 
\begin{equation}
\label{eq:factorisationtilderho2}
\GSpin_8 \longrightarrow \GGGSpin_8 \overset{\theta}{\longrightarrow} \GGGSpin_8 \longrightarrow \GSO_8,
\end{equation}
with ``standard'' first and last maps. As a consequence, the pullback of automorphic forms via $\tilde{\rho}_2$ used to perform the Spin lifting for $\GSp_6$ decomposes accordingly as a sequence of three pullbacks. Finally, we explain that $\GGGSpin_8$ actually appears as a Levi subgroup in the exceptional similitude group ${\rm GE}_6$, with $\theta$ induced by an inner automorphism of ${\rm GE}_6$.

	\vskip 10pt

\subsection{\bf The Spin lifting} 
\label{sec:spinliftingwithcenter}
\vskip 5pt
The Langlands dual group of $\GSp_6$ is $\GSpin_7(\C)$ and one has the Spin representation (see \S~\ref{SS:spingp})
\[ {\rm spin}: \GSpin_7(\C) \longrightarrow  \GL_8(\C). \]
 The corresponding weak lifting from $\PGSp_6$ to $\GL_8$ is the {\it spin lifting} for $\GSp_6$.

\begin{thm} \label{T:GSpin lifting}
Let $\pi$ be a cuspidal representation of $\GSp_6$ whose restriction to $\Sp_6$ has a generic A-parameter. Then the weak spin lifting of $\pi$ exists on $\GL_8$.\end{thm}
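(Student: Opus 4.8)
The plan is to imitate the proof of Theorem~\ref{thm:weakspinlift} (and Theorem~\ref{T:strong}), replacing the adjoint groups by their similitude counterparts at each of the three steps (a)--(c) listed at the start of \S~\ref{S:GSp(6)}. First I would reduce to the globally generic case: by Lemma~\ref{L:generic}(i), $\pi$ is nearly equivalent to a globally generic cuspidal representation $\pi'$ of $\GSp_6$, so it suffices to produce a weak spin lift of $\pi'$. Then, using the global similitude theta correspondence of \S~\ref{S:PGSp(6)} for the pair $\GSp_6 \times \GO_8$ (with $V$ split, so $m=n+1=4$), I would form $\Theta(\pi')$, which by \cite{GRS1} is a nonzero globally generic automorphic representation of $\GSO_8$ having the same central character as $\pi'$. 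By Proposition~\ref{P:compa} together with the unramified computation of Proposition~\ref{P:unram}, the Hecke--Satake family of $\Theta(\pi')$ at almost all places is $\iota(c(\pi'))$, where $\iota : \GSpin_7(\C) \to \GSpin_8(\C)$ is the natural embedding of diagram~\eqref{D:theta}, and $c(\pi') = c(\pi)$.

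Next comes step (b): instead of pulling back along the nonstandard $f_2 : \SO_8 \to \PGSO_8$, I would pull $\Theta(\pi')$ back to $\GSpin_8$ along the isogeny $\tilde\rho_2 : \GSpin_8 \to \GSO_8$ of~\eqref{E:tildef} (equivalently $\widetilde{\rho_2}$ from \S~\ref{SS:VQ}, which induces one of the half-spin representations of $\GSpin_8$). By Proposition~\ref{P:isogeny}, applied to the exact sequence built from $\tilde\rho_2$ (here $Z = \mu_2$ and $S$ is trivial, as $\tilde\rho_2$ is an isogeny), any irreducible automorphic constituent of $\tilde\rho_2^*(\Theta(\pi'))$ is a weak functorial lift of $\Theta(\pi')$ with respect to $\tilde\rho_2^\vee : \GSpin_8(\C) \to \GSO_8(\C)$, and at unramified places the Satake parameter transforms by $\tilde\rho_2^\vee$. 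Finally, for step (c), I would apply the functorial transfer $\GSpin_8 \to \GL_8$ of Asgari--Shahidi \cite{AS}, attached to the standard $8$-dimensional representation $\GSpin_8(\C) \to \GL_8(\C)$, to obtain an automorphic representation $\sigma$ of $\GL_8$. Chasing Satake parameters through the composite
\[ \GSpin_7(\C) \overset{\iota}{\longrightarrow} \GSpin_8(\C) \overset{\tilde\rho_2^\vee}{\longrightarrow} \GSO_8(\C) \overset{{\rm std}}{\longrightarrow} \GL_8(\C) \]
and identifying this composite with ${\rm spin} : \GSpin_7(\C) \to \GL_8(\C)$ (the similitude analogue of the identification $f_2^\vee \circ \iota \circ {\rm std} = {\rm spin}$ used for $\PGSp_6$, which in turn rests on \cite[Prop. 35.1, Prop. 35.7]{KMRT} as recalled at the end of \S~\ref{SS:VQ}), one sees that $c(\sigma)_v = {\rm spin}(c(\pi_v))$ for almost all $v$, so $\sigma$ is the desired weak spin lift.

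The main obstacle is the bookkeeping of central characters and the precise identification of the composite $\tilde\rho_2^\vee \circ \iota$ with the spin representation of $\GSpin_7(\C)$: on the adjoint side this used triality on $\PGSO_8$, but as noted in \S~\ref{S:GSp(6)} neither $\GSO_8$ nor $\GSpin_8$ carries a triality automorphism, so one must instead argue directly from the Clifford-algebra description of the half-spin isogeny $\widetilde{\rho_2}$ of $\GSpin_8$ (or, if one prefers, via the auxiliary group $\GGGSpin_8$ and the factorisation~\eqref{eq:factorisationtilderho2}), and check that its restriction along $\iota$ recovers ${\rm spin}$ on the nose, including the action on similitude factors. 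A secondary point requiring care is to confirm that the theta lift $\Theta(\pi')$ for the similitude pair is indeed nonzero and globally generic with controlled central character when $V$ is split and $m = n+1$; this follows from \cite{GRS1} as in the proof of Theorem~\ref{thm:weakspinlift}, but one should make sure the similitude version of Proposition~\ref{P:unram} (stated there already for $\GSp_{2n} \times \GO_{2n+2}$) and Proposition~\ref{P:compa} apply verbatim, which they do.
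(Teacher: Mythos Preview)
Your proposal is correct and follows essentially the same three-step strategy as the paper: reduce to the globally generic case via Lemma~\ref{L:generic}, lift to $\GSO_8$ by similitude theta correspondence (Propositions~\ref{P:unram} and~\ref{P:compa}), pull back along the isogeny $\tilde\rho_2$ via Proposition~\ref{P:isogeny}, and transfer to $\GL_8$ by Asgari--Shahidi. The ``main obstacle'' you flag is exactly what the paper isolates as Lemma~\ref{lem:spin3composite}, whose proof notes that the composite already agrees with ${\rm spin}$ on $\Spin_7(\C)$ by the commutative diagram~\eqref{E:splice-dual} and then simply checks that $\tilde\rho_2^\vee$ is the identity on the central $\mathbb{G}_m$ (one small slip in your write-up: the Asgari--Shahidi map of dual groups is ${\rm std}:\GSO_8(\C)\to\GL_8(\C)$, since $\GSpin_8^\vee=\GSO_8(\C)$, not $\GSpin_8(\C)\to\GL_8(\C)$).
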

By the discussion in \S~\ref{SS:VQ}, we have a commutative diagram:
 \begin{equation} \label{E:diagr2tf2}
 \begin{CD}
 \GSpin_8 @>{\tilde{\rho}_2}>>   \GSO_8 \\
@VVV @VVV \\
\SO_8 @>{f_2}>> \PGSO_8   
 \end{CD}
\end{equation}
where the first vertical arrow is the standard morphism $\rho$ and the second is the canonical projection.
The Langlands dual of this diagram, incorporating both the natural embedding $\iota : \GSpin_7(\C) \rightarrow \GSpin_8(\C)$ (defined in \S\ref{SS:spingp}, and extending the $\iota$ in \eqref{D:spin}) and the standard representations, is thus the following commutative diagram:
\begin{equation} \label{E:splice-dual}
\begin{CD}
\GL_8(\C) @<{\rm std}<< \GSO_8(\C) @<{\tilde{\rho}_2^{\vee}}<< \GSpin_8(\C) @<\iota<< \GSpin_7(\C)\\
@| @AAA @AAA @AAA \\
\GL_8(\C) @<{\rm std}<< \SO_8(\C) @<f_2^{\vee}<<  \Spin_8(\C) @<\iota<< \Spin_7(\C), 
\end{CD}
\end{equation}
with canonical inclusions as vertical arrows.

\vskip 5pt

\begin{lemma} 
\label{lem:spin3composite}
The composite map in the first row of the diagram \eqref{E:splice-dual} is the spin representation of $\GSpin_7(\C)$.
\end{lemma}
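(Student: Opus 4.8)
The plan is to identify the composite map in the top row of \eqref{E:splice-dual} by restriction to the simply-connected derived subgroups and by tracking the action on the relevant cocharacter (similitude) subtorus. First I would observe that $\GSpin_7(\C)$ is generated by its derived subgroup $\Spin_7(\C)$ together with the central $\GL_1$ (the spinor norm torus), and similarly for $\GSpin_8(\C)$, $\GSO_8(\C)$, and $\GL_8(\C)$. On the derived subgroups, the bottom row of \eqref{E:splice-dual} is exactly the composite $\mathrm{std} \circ f_2^\vee \circ \iota$ studied in \S\ref{SS:n=3}, which was shown there to be the spin representation of $\Spin_7(\C)$; since the diagram \eqref{E:splice-dual} commutes and the vertical arrows are inclusions, the restriction of the top-row composite to $\Spin_7(\C)$ is therefore the spin representation of $\Spin_7(\C)$. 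So it remains only to check that the top-row composite has the correct behaviour on the central $\GL_1$ of $\GSpin_7(\C)$, i.e. that it sends the spinor norm torus into the scalars $\GL_1 \subset \GL_8(\C)$ by the identity (equivalently, the similitude character of $\mathrm{spin}$ is the spinor norm).

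The key steps, in order, would be: (1) recall from \S\ref{SS:spingp} and \S\ref{SS:VQ} that $\tilde\rho_2 : \GSpin_8 \to \GSO_8$ induces one of the half-spin representations $\mathrm{spin}_\pm$ of $\GSpin_8$ after composing with $\mathrm{std}: \GSO_8 \hookrightarrow \GL_8$, so that $\mathrm{std}\circ\tilde\rho_2^\vee$ is, by construction, a half-spin representation of $\GSpin_8(\C)$; (2) recall that the embedding $\iota : \GSpin_7(\C)\hookrightarrow\GSpin_8(\C)$ is the one extending the $\iota$ of \eqref{D:spin} and arising from an orthogonal decomposition $V_8 = V_7 \perp \langle 1\rangle$, so that by the branching rules listed in \S\ref{SS:spingp} (the case $\dim V$ and $\dim V_1$ odd, $S_V \simeq S_{V_1}\boxtimes S_{V_2,+}\oplus S_{V_1}\boxtimes S_{V_2,-}$, read appropriately for the even/odd split) the pullback of a half-spin representation of $\GSpin_8$ along $\iota$ is the spin representation of $\GSpin_7$, including the matching of similitude (central) characters since both $\GSpin_7$ and $\GSpin_8$ have their spinor norm tori compatibly embedded under $\iota$; (3) conclude that $\mathrm{std}\circ\tilde\rho_2^\vee\circ\iota$ is the spin representation $\mathrm{spin}: \GSpin_7(\C)\to\GL_8(\C)$. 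Step (2) can also be phrased intrinsically: $\GSpin_7$ has a unique (up to the outer data, which is trivial here) $8$-dimensional irreducible representation whose restriction to $\Spin_7$ is $\mathrm{spin}$, and whose central character is determined by being a composite of standard morphisms; so matching on $\Spin_7$ plus matching the central character pins down the composite.

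The main obstacle is bookkeeping the similitude (central) characters correctly: a priori the top-row composite could differ from $\mathrm{spin}$ by a character of $\GSpin_7(\C)$ trivial on $\Spin_7(\C)$, i.e. by a power of the spinor norm. Ruling this out amounts to checking that each of the three maps $\iota$, $\tilde\rho_2^\vee$, $\mathrm{std}$ carries the designated similitude/spinor-norm cocharacter to the designated one in the target --- this is exactly the content of ``$\tilde\rho_2$ extends $\rho_2$'' and ``$\mathrm{std}_{\GSO_8}$ has similitude character equal to the orthogonal similitude factor'', both recalled in \S\ref{SS:VQ} (via \cite[Prop. 35.1]{KMRT}), together with the compatibility of the spinor norms of $\GSpin_7$ and $\GSpin_8$ under $\iota$ (immediate from the orthogonal direct sum $V_7 \perp \langle 1 \rangle$ and the first commutative diagram of \S\ref{SS:spingp} with $\mathrm{std}_V\circ\iota \simeq \mathrm{std}_{V_1}\oplus\mathrm{std}_{V_2}$). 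Once these are in hand, commutativity of \eqref{E:splice-dual} and the already-established identification of the bottom row finish the proof with no further computation.
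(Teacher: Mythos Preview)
Your proposal is correct and follows essentially the same approach as the paper: reduce to $\Spin_7(\C)$ via commutativity of \eqref{E:splice-dual} and the identification in \S\ref{SS:n=3}, then pin down the remaining ambiguity (a power of the similitude character) by tracking the central $\mathbb{G}_m$. The paper's version is more concise, dispatching the central-character check in a single sentence by noting that $\tilde\rho_2$ itself induces the identity on the central $\mathbb{G}_m$ (since it realizes a half-spin representation of $\GSpin_8$), whereas you trace through each of $\iota$, $\tilde\rho_2^\vee$, and $\mathrm{std}$ separately; but the substance is the same. One small slip: in step (2) you cite the branching case ``$\dim V$ and $\dim V_1$ odd'' from \S\ref{SS:spingp}, but for $V_8 = V_7 \perp \langle 1\rangle$ the relevant case is ``$\dim V$ even, each $\dim V_i$ odd,'' giving $S_{V,\pm}\circ\iota \simeq S_{V_1}\boxtimes S_{V_2}$ directly.
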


\begin{proof} Let $h : \GSpin_7(\C) \rightarrow \GL_8(\C)$ be the composite map of the statement.
By commutativity of \eqref{E:splice-dual}, and the discussion in \S\ref{SS:n=3}, we know that $h|_{\Spin_7(\C)}$ is the spin representation of $\Spin_7(\C)$. It only remains to show that $h$, or equivalently ${\tilde{\rho}_2^{\vee}}$, induces the identity map $t \mapsto t$ on the natural central (or cocentral) $\mathbb{G}_m$ of its source and its target. But this follows as $\tilde{\rho}_2 : {\rm GSpin}_8 \rightarrow {\rm GSO}_8$ itself has this property, since it induces a half-spin representation of ${\rm GSpin}_8$.
\end{proof}

\begin{proof} (of Theorem~\ref{T:GSpin lifting}) 
By Lemma~\ref{lem:spin3composite}, the Spin lifting from $\GSp_6$ to $\GL_8$ has been broken down into a composite of three functorial liftings appearing in the first row in \eqref{E:splice-dual}, namely those induced by the three dual maps $\iota$, $\tilde{\rho}_2^\vee$ and ${\rm std}$. Starting from a cuspidal representation $\pi$ of $\GSp_6$
whose restriction to $\Sp_6$ has generic A-parameter and whose Hecke-Satake family $c(\pi)$ is contained in $\GSpin_7(\C)$, these three  weak functorial liftings can be obtained as follows:
\vskip 10pt
\begin{itemize}
\item ({\it lifting $\iota$}) As we saw  in the proof of Theorem \ref{thm:weakspinlift}, after replacing $\pi$ by a globally generic cuspidal representation in the same global A-packet (constructed by B. Xu),  the global similitude theta lifting provides an explicit construction of the weak functorial lifting associated to
$\iota$, giving rise to an automorphic representation $\Theta(\pi)$ on $\GSO_8$ with Hecke-Satake family 
\[    c(\Theta(\pi)) \,\,=\,\, \iota( c(\pi)) \subset \GSpin_8(\C). \]
\vskip 5pt

\item ({\it lifting ${\tilde{\rho}_2^{\vee}}$}) It follows by Proposition \ref{P:isogeny} that the functoriality for ${\tilde{\rho}_2^{\vee}}$ is simply given by the pullback of automorphic forms via the isogeny $\tilde{\rho}_2$ : 
any automorphic constituent $\Pi$ of the restriction of $\Theta(\pi)$ to ${\rm GSpin}_8$ via $\tilde{\rho}_2$
has Hecke-Satake family
 \[  c(\Pi) \,\,= \,\,{\tilde{\rho}_2^{\vee}}\left( c(\Theta(\pi)) \right) \,\,=\,\, {\tilde{\rho}_2^{\vee}}\left( \iota( c(\pi)) \right) \,\, \subset \GSO_8(\C).\]
 \vskip 5pt
 
 \item ({\it lifting {\rm std}}) Finally, the last functoriality from $\GSpin_8$ to $\GL_8$ induced by ${\rm std}$ has been shown by Asgari-Shahidi \cite{AS}, allowing us to produce an automorphic representation $\Pi'$ of $\GL_8$ with Hecke-Satake family
 \[ c(\Pi') \,\,= \,\,{\rm std} (c(\Pi) )\,\,=\,\,{\rm std} \left({\tilde{\rho}_2^{\vee}} \left( \iota( c(\pi))  \right) \right)\,\, \subset \GL_8(\C). \]
 \end{itemize}
This completes the proof of the theorem.\end{proof}

\vskip 5pt
\begin{remark} 
\label{rem:stringliftGSp6}
If one knows that the Asgari-Shahidi lift from $\GSpin$ groups to $\GL$ is strong at unramified places, then one has a similar strengthening of the theorem as in Theorem \ref{T:strong} (by the same proof).
\end{remark}

\vskip 5pt

Let us end this subsection with an application to a lifting from $\PGL_7$ to $\SL_8$.
Observe that the spin representation of $\GSpin_7(\C)$ induces a morphism 
$$ \overline{{\rm spin}} : \SO_7(\C) \longrightarrow {\rm PGL}_8(\C).$$
On the other hand, if $\pi$ is a selfdual cuspidal automorphic representation of $\PGL_7$, 
then $\pi$ is necessarily orthogonal. In particular, if $\pi_v$ is unramified then $c(\pi_v)$ is the image under ${\rm std}: \SO_7(\C) \rightarrow \GL_7(\C)$ of a unique semisimple conjugacy class $c'(\pi_v)$ in $\SO_7(\C)$. Setting $c'(\pi)=\{ c'(\pi_v) : v \notin S\}$, it makes sense to ask for the existence of an automorphic representation $\Pi$ of $\SL_8$ satisfying $c(\Pi)=\overline{{\rm spin}}(c'(\pi))$. We call such a $\Pi$ a (weak) {\it $\overline{{\rm spin}}$ lifting } of $\pi$.

\begin{thm} \label{T:wwpinb}
If $\pi$ is a selfdual cuspidal automorphic representation of $\PGL_7$, then there exists a $\overline{{\rm spin}}$ lifting of $\pi$.
\end{thm}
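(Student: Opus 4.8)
\emph{Proof proposal.} Since $7$ is odd, a selfdual cuspidal $\pi$ of $\PGL_7$ is automatically of orthogonal type, so the family $c'(\pi)$ in $\SO_7(\C)$ is well-defined as in the statement. The plan is to run the standard lift \emph{backwards} from $\PGL_7$ to $\Sp_6$, extend to $\GSp_6$, apply the Spin lifting of Theorem~\ref{T:GSpin lifting}, and then push down from $\GL_8$ to $\SL_8$. First I would invoke the automorphic descent of Ginzburg--Rallis--Soudry \cite{GRS2} (compatibly with \cite{A}): the selfdual cuspidal $\pi$ of $\GL_7$ is the standard lift ${\rm std}_*(\tau)$ of a globally generic cuspidal representation $\tau$ of $\Sp_6$, whose A-parameter is the generic discrete parameter given by $\pi$ itself (a $7$-dimensional irreducible orthogonal representation). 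In particular, $\tau_v$ is unramified whenever $\pi_v$ is, and then $c(\tau_v)=c'(\pi_v)$.

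Next, exactly as in the proofs of Lemma~\ref{L:generic}(i) and Theorem~\ref{thm:weakspinlift}, I would invoke Bin Xu's global $\GSp_6$-packet $\tilde{\mathcal{A}}_{\Psi}\subset \mathcal{A}_{cusp}(\GSp_6)$ with $\tilde{\mathcal{A}}_{\Psi}|_{\Sp_6}=\mathcal{A}_{\Psi}$ \cite{X1,X2}, where $\Psi$ is the generic A-parameter of $\tau|_{\Sp_6}$; choosing a member $\tilde\tau\subset \tilde{\mathcal{A}}_{\Psi}$ with $\tau\subset \tilde\tau|_{\Sp_6}$, one gets a cuspidal representation $\tilde\tau$ of $\GSp_6$ whose restriction to $\Sp_6$ has generic A-parameter. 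Theorem~\ref{T:GSpin lifting} then supplies an automorphic representation $\sigma$ of $\GL_8$ with $c(\sigma)={\rm spin}(c(\tilde\tau))$, where ${\rm spin}:\GSpin_7(\C)\to\GL_8(\C)$.

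It then remains to assemble the dual-group bookkeeping. Both inclusions $\Sp_6\hookrightarrow\GSp_6$ and $\SL_8\hookrightarrow\GL_8$ are instances of the exact sequence \eqref{cd:defphiZQ} with $Z$ trivial and $S=\mathbb{G}_m$, so Proposition~\ref{P:isogeny} applies to each. The dual of $\Sp_6\hookrightarrow\GSp_6$ is the standard morphism $\rho:\GSpin_7(\C)\to\SO_7(\C)$, so $\rho(c(\tilde\tau))=c(\tilde\tau|_{\Sp_6})=c(\tau)=c'(\pi)$. By the very definition of $\overline{{\rm spin}}$, the central $\mathbb{G}_m=\ker\rho$ of $\GSpin_7(\C)$ acts by scalars on the spin module, so composing ${\rm spin}$ with the canonical projection $p:\GL_8(\C)\to\PGL_8(\C)$ factors through $\rho$, i.e. $p\circ{\rm spin}=\overline{{\rm spin}}\circ\rho$; hence $p(c(\sigma))=\overline{{\rm spin}}(c'(\pi))$. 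Finally $p$ is the dual of $\SL_8\hookrightarrow\GL_8$, so by Proposition~\ref{P:isogeny} any irreducible automorphic constituent $\Pi$ of $\sigma|_{\SL_8(\A)}$ satisfies $c(\Pi)=p(c(\sigma))=\overline{{\rm spin}}(c'(\pi))$, which is precisely the assertion that $\Pi$ is a $\overline{{\rm spin}}$ lifting of $\pi$.

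The ingredients are all recalled above, and there is no genuinely new computation. The one delicate step is the passage from $\Sp_6$ to $\GSp_6$ --- producing $\tilde\tau$ on $\GSp_6$ restricting to contain $\tau$ and having generic A-parameter --- which I would handle verbatim as in the proof of Theorem~\ref{thm:weakspinlift}; as there, it rests on \cite{X1,X2,A} and is therefore subject to the same hypotheses flagged in the introduction. (If one prefers to avoid \cite{A}, one can instead start directly from the globally generic cuspidal member of the relevant $\Sp_6$ A-packet, as in Lemma~\ref{L:generic}, keeping the rest of the argument unchanged.)
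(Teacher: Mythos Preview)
Your proposal is correct and follows essentially the same route as the paper's own proof: descend from $\PGL_7$ to a globally generic cuspidal $\tau$ on $\Sp_6$ via \cite{GRS2}, extend to $\GSp_6$, apply Theorem~\ref{T:GSpin lifting}, and restrict to $\SL_8$ using Proposition~\ref{P:isogeny} together with the identity $p\circ{\rm spin}=\overline{{\rm spin}}\circ\rho$. The only cosmetic difference is in the extension step: the paper simply cites \cite[Lemma~5.3]{X1} to produce a (globally generic) cuspidal $\tilde\tau$ on $\GSp_6$ with $\tau\subset\tilde\tau|_{\Sp_6}$, whereas you route through the global A-packet machinery of Lemma~\ref{L:generic}; since the descent $\tau$ is already globally generic, this extra layer is unnecessary (and your final parenthetical about ``starting from the globally generic member'' is in fact what is already being done).
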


\begin{proof} By \cite{GRS2}, there exists a globally generic cuspidal automorphic representation $\sigma$ of ${\rm Sp}_6$ such that ${\rm std}(c(\sigma))=c(\pi)$, or equivalently, such that $c(\sigma)=c'(\pi)$. 
Let $\widetilde{\sigma}$ be a (necessarily globally generic) cuspidal automorphic representation of $\GSp_6$ such that $\widetilde{\sigma}|_{{\rm Sp}_6(\mathbb{A})}$ contains $\sigma$; the existence of such a $\widetilde{\sigma}$ follows for instance from \cite{X1} Lemma 5.3.
The image of $c(\widetilde{\sigma})$ under $\rho : \GSpin_7(\C) \rightarrow \SO_7(\C)$ is then $c(\pi')$ by Proposition~\ref{P:isogeny}. 
Let $\Pi_0$ be spin lifting of $\widetilde{\sigma}$ given by Theorem~\ref{T:GSpin lifting}; it satisfies $c(\Pi_0)={\rm spin}(c(\widetilde{\sigma}))$. Any automorphic constituent $\Pi$ of $\Pi_0|_{\SL_8}$ has thus the required property, by Proposition~\ref{P:isogeny} again.
\end{proof}

\vskip 5pt

\subsection{\bf Rankin-Selberg lifting}
\label{sec:RSliftingcenter}
Recall that in Corollary \ref{C:tensor}, we have produced the weak functorial lifting from $\PGSp_4 \times \PGL_2$ to $\GL_8$ relative to the map
\[  \begin{CD} \Sp_4(\C) \times \SL_2(\C) @>\boxtimes>> \SO_8(\C) @>>> \GL_8(\C) \end{CD} \]
of dual groups.  In the same vein, we can extend this result by removing the hypothesis of trivial central characters. 
More precisely, we have:

\vskip 5pt

\begin{thm} \label{T:RS2}
Suppose that $\pi$ is a cuspidal representation of $\GSp_2 = \GL_2$ and $\sigma$ a cuspidal representation of $\GSp_4$, with Hecke-Satake family 
\[ c(\pi) \subset \GSp_2^{\vee} = \GSpin_3(\C)  = \GL_2(\C) \quad \text{and} \quad c(\sigma) \subset \GSpin_5(\C) = \GSp_4(\C). \]
Then there is an automorphic representation $\pi \boxtimes \sigma$ of $\GL_8$ with Hecke-Satake family 
\[    c(\pi \otimes \sigma) = \{ c(\pi_v) \boxtimes c(\sigma_v) \}_{v \notin S}. \]
\end{thm}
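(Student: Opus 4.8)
The plan is to mimic the strategy of Theorem~\ref{T:GSpin lifting}, replacing $\GSp_6$ by the pair $\GSp_2 \times \GSp_4$ and the embedding $\iota : \GSpin_7(\C) \to \GSpin_8(\C)$ by the analogue of the map $j$ of \S4, now at the level of similitude dual groups. First I would record the relevant diagram of similitude dual groups: there is a natural map
\[ \tilde{j} : \GSpin_3(\C) \times_{\mathbb{G}_m} \GSpin_5(\C) \longrightarrow \GSpin_8(\C), \]
lying over the endoscopic map $\SO_3(\C) \times \SO_5(\C) \to \SO_8(\C)$, where the fiber product identifies the two similitude characters; this is the similitude enhancement of the map $j$ in \S4, and by Asgari-Xu (see \cite{X1,X2}, and the similitude version of Proposition~\ref{P:unram2}) the associated weak functorial lifting from $\GSp_2 \times \GSp_4$ to $\GSO_8$ is realized by the global similitude theta lifting — more precisely, for $(\pi,\sigma)$ as in the statement one forms a cuspidal representation $\tau$ of $\GSp_4$ (resp. $\GSp_2$) and its global theta lift $\Theta(\tau)$ to $\GSO_8$ over the split octonion algebra, twisted appropriately by $\pi$ (resp. $\sigma$); being in the stable range, this theta lift is nonzero and square-integrable. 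Then I would compose with the pullback via the isogeny $\tilde{\rho}_2 : \GSpin_8 \to \GSO_8$ of \eqref{E:tildef}, which by Proposition~\ref{P:isogeny} realizes the functoriality for $\tilde{\rho}_2^\vee$, and finally with the Asgari-Shahidi transfer $\GSpin_8 \to \GL_8$ induced by ${\rm std}$, exactly as in the third bullet of the proof of Theorem~\ref{T:GSpin lifting}.

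The remaining algebraic point, analogous to Lemma~\ref{lem:spin3composite}, is to identify the composite
\[ \GSpin_3(\C) \times_{\mathbb{G}_m} \GSpin_5(\C) \overset{\tilde{j}}{\longrightarrow} \GSpin_8(\C) \overset{\tilde{\rho}_2^\vee}{\longrightarrow} \GSO_8(\C) \overset{{\rm std}}{\longrightarrow} \GL_8(\C) \]
with the tensor product map $c \boxtimes c'$. On the derived groups $\Spin_3 \times \Spin_5$ this is precisely the computation already carried out in \S4 (namely $f_2^\vee \circ j = {\rm spin}_3 \boxtimes {\rm spin}_5$, which is $\mathrm{std}_{\SL_2} \boxtimes \mathrm{std}_{\Sp_4}$), so it suffices to check the effect on the central $\mathbb{G}_m$'s; this follows just as in Lemma~\ref{lem:spin3composite} from the fact that $\tilde{\rho}_2$ induces the identity on the natural (co)central $\mathbb{G}_m$ because it is a half-spin isogeny. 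Having identified the composite, tracking Hecke-Satake families through the three steps gives $c(\Pi') = {\rm std}(\tilde{\rho}_2^\vee(\tilde{j}(c(\pi),c(\sigma)))) = \{ c(\pi_v) \boxtimes c(\sigma_v) \}_{v \notin S}$, as required.

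I expect the main obstacle to be purely bookkeeping around central characters and the fiber product: one must be careful that the similitude theta correspondence matches up the similitude characters of $\GSp_2$, $\GSp_4$ and $\GSO_8$ correctly (so that the source of $\tilde{j}$ is genuinely the $\mathbb{G}_m$-fiber product rather than the direct product), and that the globally generic representative needed to run the theta argument is available in the relevant global A-packet — here one invokes \cite{X1,X2} together with the nonvanishing of theta in the stable range as in \S4, which removes any genericity hypothesis on $\pi$ or $\sigma$ beyond what is stated. None of these steps is deep once the corresponding facts for $\PGSp_2 \times \PGSp_4$ (Corollary~\ref{C:tensor}) and for the $\GSp_6$ spin lifting (Theorem~\ref{T:GSpin lifting}) are in hand; the proof is essentially an assembly of those two arguments.
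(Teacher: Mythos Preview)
Your architecture is exactly the paper's: factor the tensor product map as
\[
\GSpin_3(\C) \times \GSpin_5(\C) \xrightarrow{\ \iota\ } \GSpin_8(\C) \xrightarrow{\ \tilde{\rho}_2^{\vee}\ } \GSO_8(\C) \xrightarrow{\ {\rm std}\ } \GL_8(\C),
\]
realize the second step by pullback (Proposition~\ref{P:isogeny}), the third by Asgari--Shahidi, and check as in Lemma~\ref{lem:spin3composite} that the composite is the external tensor product on Satake parameters. All of that is correct.

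The gap is in your first step. You claim the lifting attached to $\tilde{j}$ (your $\iota$) is ``realized by the global similitude theta lifting'' and sketch ``theta lift $\tau$ from $\GSp_4$ to $\GSO_8$, then twist by $\pi$''. This does not work: $\GSp_2 \times \GSp_4$ is not a member of any dual pair with $\GSO_8$, and the theta lift of $\sigma$ alone from $\GSp_4$ to $\GSO_8$ lands (after restriction to $\SO_8$) in the A-parameter $\Psi_\sigma \boxplus S_3$, not $\Psi_\sigma \boxplus \Psi_\pi$; there is no ``twist by $\pi$'' that converts the former into the latter. The stable-range theta constructions of \S4 produce precisely the \emph{nontempered} packets of Theorem~\ref{T:ikeda}, not the tempered endoscopic lift you need.

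The paper's fix is simply to quote Bin Xu \cite{X1,X2}: the map $\iota$ is the dual of an endoscopic datum for similitude groups, and Xu establishes the corresponding transfer from $\GSp_2 \times \GSp_4$ to $\GSO_8$ by trace-formula methods. Once you replace your theta description of step one by ``invoke Xu's endoscopic lifting'', your proof coincides with the paper's. (A minor point: the paper writes the source as the full product $\GSpin_3(\C) \times \GSpin_5(\C)$ rather than your fiber product over $\mathbb{G}_m$; this does not affect the argument.)
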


To prove this, note that from \S~\ref{SS:spingp} one has a natural map
\[  \iota : \GSpin_3(\C) \times \GSpin_5(\C) \longrightarrow \GSpin_8(\C).  \]
If we replace the map $\GSpin_7(\C) \longrightarrow \GSpin_8(\C)$ in (\ref{E:splice-dual}) by this map, the composite
\[   \begin{CD}
\GSpin_3(\C) \times \GSpin_5(\C) @>\iota>> \GSpin_8(\C) @>{\tilde{\rho}_2^{\vee}}>> \GSO_8(\C) @>{\rm std}>> \GL_8(\C) \end{CD} \]  
induces the Rankin-Selberg lifting $\boxtimes$.  
\vskip 5pt

To establish this Rankin-Selberg lifting, it thus suffices to establish the weak functorial lifting for the three maps above. The second and third steps have been described in the previous subsection. The first (induced by $\iota$) is an endoscopic lifting for similitude groups, from $\GSp_2 \times \GSp_4$ to $\GSO_8$.
This has been proven by Bin Xu \cite{X1, X2}.  With this, the proof of the theorem is complete. 
\vskip 10pt

\subsection{\bf  Another functorial lifting} 
\label{sec:yetanotherlifting}
  The Rankin-Selberg lifting proved in Theorem \ref{T:RS2}, based on the endoscopic transfer from $\GSp_4 \times \GL_2$ followed by an application of triality, is a physical manifestation of the equality
  \[ 3 + 5 = 4 \times 2. \]
 In this subsection, we consider another instance of this construction starting from the endoscopic transfer induced by the natural morphism
 \[   \GSpin_4(\C) \times \GSpin_4(\C) \longrightarrow \GSpin_8(\C). \] 
 
 The associated endoscopic transfer  from $\GSO_4 \times \GSO_4$ to $\GSO_8$ has largely been shown in \cite[Thm. 1.2]{X2},  under a hypothesis \cite[Defn. 4.4]{X2} on cuspidal automorphic representations. As we shall explain later on, the unavailability of this endoscopic lifting in full generality will not unduly bother us below.  Hence, for the subsequent discussion, the reader may  assume for simplicity that this endoscopic lifting is available.  The question we shall consider is what happens when one applies the triality automorphism to the resulting lift. 
 \vskip 5pt

   Let us begin by setting up some notations, for bookkeeping purposes. 
   One has the following concrete realization of the groups $\GSO_4$ and $\GSpin_4$:
   \[  \GSO_4 = (\GL_2 \times \GL_2) / \mathbb{G}_m^{\nabla} \quad \text{and} \quad \GSpin_4 = \GL_2 \times_{\det}  \GL_2 \]
   where $\mathbb{G}_m^{\nabla} = \{ (t, t^{-1}): t \in \mathbb{G}_m \}$ and the suberscript $_{\det}$ refers to the subgroup of those elements $(g_1, g_2)$ satisfying 
  $\det(g_1) = \det(g_2)$.  Given the large number of $\GL_2$'s here, we will annotate the various $\GL_2$'s with card suits, so as to help the reader and ourselves to distinguish between them. In particular, we set:
  \[  \GSO_4^{\heartsuit \diamondsuit} = (\GL_2^{\heartsuit} \times \GL_2^{\diamondsuit}) / \mathbb{G}_m^{\nabla} \quad \text{and}  \quad 
\GSpin_4^{\heartsuit\diamondsuit} = \GL_2^{\heartsuit} \times_{\det} \GL_2^{\diamondsuit}. \]
  Hence we have
  \[  (\GSO_4^{\heartsuit \diamondsuit})^{\vee} = \GSpin_4^{\heartsuit\diamondsuit}(\C) \quad \text{and} \quad (\GSpin_4^{\heartsuit\diamondsuit})^{\vee}  = \GSO_4^{\heartsuit \diamondsuit}(\C). \]
  \vskip 5pt
  
  Given the above concrete realizations of $\GSO_4$ and $\GSpin_4$, we can describe their (L-packets of) cuspidal  representations  as follows. Given cuspidal representations $\pi_{\heartsuit}$ of $\GL_2^{\heartsuit}$ and $\pi_{\diamondsuit}$  of $\GL_2^{\diamondsuit}$, the restriction of $\pi_{\heartsuit} \otimes \pi_{\diamondsuit}$ to $\GSpin_4^{\heartsuit\diamondsuit}$ gives an L-packet $[ \pi_{\heartsuit} \otimes \pi_{\diamondsuit}]$. Note that 
  \[ [\pi_{\heartsuit} \otimes \pi_{\diamondsuit}] = [ \pi_{\heartsuit} \cdot \chi  \otimes \pi_{\diamondsuit} \cdot \chi^{-1} ] \quad \text{for any Hecke character $\chi$}. \]
  On the other hand, assume now that the central characters of $\pi_{\heartsuit}$ and $\pi_{\diamondsuit}$ are equal. Then $\pi_{\heartsuit} \otimes \pi_{\diamondsuit}$ defines a cuspidal representation of $\GSO_4^{\heartsuit\diamondsuit}$.
  \vskip 5pt
  
  Again, using the above concrete realizations of  $\GSpin_4$ and $\GSO_4$, the reader can convince herself that there is no isogeny  $\GSpin_4 \longrightarrow \GSO_4$. However, we note the following curious lemma:
  \vskip 5pt
  
  \begin{lemma} \label{L:f-isogeny}
  The map  
    \[   \GL_2^{\heartsuit} \times \GL_2^{\diamondsuit} \times \GL_2^{\spadesuit} \times \GL_2^{\clubsuit} \longrightarrow  \GL_2^{\heartsuit} \times \GL_2^{\spadesuit} \times \GL_2^{\diamondsuit} \times \GL_2^{\clubsuit}  \]
  given by exchanging the second and third entries gives rise to  an isogeny
  \[  f: (\GSpin^{\heartsuit \diamondsuit}_4 \times  \GSpin^{\spadesuit \clubsuit}_4)/ \mathbb{G}_m^{\nabla}   \longrightarrow \GSO^{\heartsuit\spadesuit}_4 \times_{\sim} \GSO^{\diamondsuit \clubsuit}_4  \]
  The kernel of this isogeny is the subgroup
  \[  \mu_2^{\heartsuit \spadesuit} =  \{[(\epsilon, 1) , (\epsilon, 1)]: \epsilon \in \mu_2 \} \subset (\mu_2^{\heartsuit} \times \mu_2^{\diamondsuit} \times \mu_2^{\spadesuit} \times \mu_2^{\clubsuit})/  \mathbb{G}_m^{\nabla}. \]
   \end{lemma}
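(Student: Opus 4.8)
The plan is to exhibit $f$ by an explicit formula, check it is a well-defined homomorphism of algebraic groups, compute its kernel by hand, and then deduce surjectivity from a dimension count. Concretely: an element of $\GSpin_4^{\heartsuit \diamondsuit} \times \GSpin_4^{\spadesuit \clubsuit}$ is a quadruple $(a,b,c,d)$ with $a \in \GL_2^{\heartsuit}$, $b \in \GL_2^{\diamondsuit}$, $c \in \GL_2^{\spadesuit}$, $d \in \GL_2^{\clubsuit}$ subject to the two relations $\det a = \det b$ and $\det c = \det d$, and the subgroup $\mathbb{G}_m^{\nabla}$ we divide by is the central torus $t \mapsto (tI, tI, t^{-1}I, t^{-1}I)$. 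After exchanging the middle two entries, the shuffle sends such a quadruple to the pair $\big([a,c]_{\heartsuit\spadesuit},\, [b,d]_{\diamondsuit\clubsuit}\big)$, where $[\,\cdot\,,\,\cdot\,]$ denotes the relevant class in a $\GSO_4$. Well-definedness is then two short checks. First, recalling that the similitude of a class $[g_1, g_2] \in \GSO_4$ is $\det g_1 \cdot \det g_2$, the similitudes of the two coordinates above are $\det a \cdot \det c$ and $\det b \cdot \det d$, which agree by the two $\GSpin_4$-relations; hence the image lies in the fiber product $\GSO_4^{\heartsuit\spadesuit} \times_{\sim} \GSO_4^{\diamondsuit\clubsuit}$. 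Second, the generator $(tI, tI, t^{-1}I, t^{-1}I)$ of $\mathbb{G}_m^{\nabla}$ is sent to $\big([tI, t^{-1}I],\, [tI, t^{-1}I]\big)$, which is trivial since $(tI, t^{-1}I) \in \mathbb{G}_m^{\nabla}$ inside each $\GSO_4$. So the shuffle descends to a homomorphism $f$ on the quotient.

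I would then compute $\ker f$ directly. A class $[(a,b,c,d)]$ lies in $\ker f$ exactly when $(a,c)$ and $(b,d)$ each lie in the corresponding copy of $\mathbb{G}_m^{\nabla}$, i.e. $a = sI$, $c = s^{-1}I$, $b = uI$, $d = u^{-1}I$ for some $s, u$; the relations $\det a = \det b$ and $\det c = \det d$ force $s^2 = u^2$, so $u = \epsilon s$ with $\epsilon \in \mu_2$. Multiplying the representative $(sI,\, \epsilon s I,\, s^{-1}I,\, \epsilon s^{-1}I)$ by $(s^{-1}I,\, s^{-1}I,\, sI,\, sI) \in \mathbb{G}_m^{\nabla}$ reduces it to $(I, \epsilon I, I, \epsilon I)$, and, when $\epsilon = -1$, a further multiplication by $(-I,-I,-I,-I)$ (the value at $t = -1$ of the $\mathbb{G}_m^{\nabla}$-generator) rewrites it as $(\epsilon I, I, \epsilon I, I)$. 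Hence $\ker f = \{ [(\epsilon I, I, \epsilon I, I)] : \epsilon \in \mu_2 \}$, which is precisely the subgroup $\mu_2^{\heartsuit\spadesuit}$ of the statement; in particular $\ker f$ is finite of order $2$.

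It then remains only to observe that $f$ is a homomorphism between connected algebraic groups of the same dimension (each is $13$-dimensional) with finite kernel, so its image is a closed connected subgroup of full dimension, hence all of the target; thus $f$ is an isogeny with kernel $\mu_2^{\heartsuit\spadesuit}$. I do not expect any real difficulty here: the only point needing care is bookkeeping the four copies of $\GL_2$ and the various $\mathbb{G}_m^{\nabla}$'s without a sign slip --- which is exactly what the card-suit labels are there to prevent --- and, should one prefer to avoid the abstract dimension argument for surjectivity, one can instead produce an explicit rational section of $f$ by solving $s^2 / u^2 = \det b / \det a$ for the scaling parameters, making dominance manifest.
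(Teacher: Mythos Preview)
Your proof is correct. The paper itself does not supply a proof of this lemma; it is stated as a ``curious lemma'' that the reader is implicitly invited to verify, and the discussion immediately proceeds to use the map $f$ in the diagram~\eqref{E:final-dual}. Your argument is exactly the elementary verification the paper leaves unwritten: unwind the explicit realizations $\GSpin_4 = \GL_2 \times_{\det} \GL_2$ and $\GSO_4 = (\GL_2 \times \GL_2)/\mathbb{G}_m^{\nabla}$, check that the shuffle respects the similitude fiber product and kills $\mathbb{G}_m^{\nabla}$, compute the kernel by hand, and finish with a dimension count. The bookkeeping (including the normalization that puts $\epsilon$ in the $\heartsuit$ and $\spadesuit$ slots rather than $\diamondsuit$ and $\clubsuit$, using $t=-1$ in $\mathbb{G}_m^{\nabla}$) is done correctly.
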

 
 The map $f$ fits into the following diagram of morphisms of Langlands dual groups:

 \begin{equation} \begin{CD} \label{E:final-dual}
 \GSO^{\heartsuit\spadesuit}_4(\C)  \times_{\sim} \GSO^{\diamondsuit\clubsuit}_4(\C)  @>\iota>> \GSO_8(\C) @>{\rm std}>> \GL_8(\C) \\
 @AfAA   @AA{\tilde{\rho}_2^{\vee}}A \\
 (\GSpin^{\heartsuit\diamondsuit}_4(\C) \times \GSpin^{\spadesuit\clubsuit}_4(\C))/\mathbb{G}_m^{\nabla} @>\tilde{\iota}>> \GSpin_8(\C) \\
 @VpVV  @VVpV \\
 \SO_4(\C) \times \SO_4(\C) @>\iota_{\flat}>> \SO_8(\C) @>{\rm std}>> \GL_8(\C) 
 \end{CD} \end{equation}

 where   the horizontal arrows are embeddings and the right upward arrow is the same map as in (\ref{E:splice-dual}).   While the embeddings $\iota$ and $\iota_{\flat}$ are the natural ones, there are in fact three choices for the embedding $\tilde{\iota}$, determined by the image of $\mu_2^{\heartsuit\spadesuit} = {\rm Ker}(f)$ in the center of $\Spin_8(\C)$. 
 For the above diagram to be commutative, we have to use the one such that 
 \[ \tilde{\iota}(\mu_2^{\heartsuit\spadesuit}) = {\rm Ker}( \tilde{\rho}_2^{\vee}).  \]
 \vskip 5pt
 
 This diagram should induce a corresponding diagram of weak Langlands functorial lifting:
 \begin{equation} \begin{CD} \label{final}  
  [\mathcal{A}( (\GSpin^{\heartsuit\spadesuit}_4 \times \GSpin^{\diamondsuit\clubsuit}_4)/ \mathbb{G}_m^{\nabla})]
 @>\iota_*>> [\mathcal{A}(\GSpin_8)] @>{\rm std}_*>> [\mathcal{A}(\GL_8)] \\ 
 @Af_*AA   @AA{({\tilde{\rho}}_2^{\vee})_*}A \\
[ \mathcal{A}(\GSO^{\heartsuit\diamondsuit} _4 \times_{\sim}\GSO^{\spadesuit\clubsuit}_4) ] @>\tilde{\iota}_*>> [\mathcal{A}(\GSO_8)] \\
 @Vp_*VV  @VVp_*V \\
[ \mathcal{A}(\SO_4 \times \SO_4)] @>(\iota_{\flat})_*>>  [\mathcal{A}(\SO_8)] @>{\rm std}_*>>  [\mathcal{A}(\GL_8)]
 \end{CD} \end{equation}
 where $[\mathcal{A}(G)]$ denotes the set of near equivalence classes of irreducible automorphic representations of $G$.
 Our goal in this subsection is to understand the composite lifitng  ${\rm std}_* \circ (\tilde{\rho}_2^{\vee})_* \circ \tilde{\iota}_*$.
 As mentioned above, the endoscopic lifting $\tilde{\iota}_*$ is not known in full generality (see \cite[Thm. 1.2]{X2}), but our main concern  is with the composite lifting above. For this,   the above commutative diagram allows us to construct $ {\rm std}_* \circ \iota_* \circ  f_*$ instead, thus bypassing the lack of $\tilde{\iota}_*$ in full generality.
 \vskip 5pt
 
 We now compute $f_*$ and ${\rm std}_* \circ \iota_*$:
  \begin{itemize}
 \item   Since $f$ is an isogeny, $f_*$ is given by pullback of automorphic forms 
by Proposition \ref{P:isogeny}.  More precisely, suppose that 
 \[ \sigma:=  [(\pi_{\heartsuit} \otimes \pi_{\diamondsuit}) \otimes (\pi_{\spadesuit} \otimes \pi_{\clubsuit})] \in [\mathcal{A}(\GSO_4^{\heartsuit\diamondsuit} 
 \times_{\sim} \GSO_4^{\spadesuit\clubsuit})], \]
 so that
 \[  \omega_{\heartsuit} = \omega_{\diamondsuit} \quad \text{and} \quad \omega_{\spadesuit} = \omega_{\clubsuit} \]
 where $\omega_?$ denotes the central character of $\pi_?$. Then
 Lemma \ref{L:f-isogeny} shows that 
\[  f_*(\sigma) = [\pi_{\heartsuit} \otimes \pi_{\spadesuit})]  \otimes   [\pi_{\diamondsuit} \otimes \pi_{\clubsuit}] \in  [\mathcal{A}( (\GSpin^{\heartsuit\spadesuit}_4 \times \GSpin^{\diamondsuit\clubsuit}_4)/ \mathbb{G}_m^{\nabla})]. \]
\vskip 5pt

\item To determine ${\rm std}_* \circ \iota_*$, we note that there is a commutative diagram of morphisms of dual groups:
\[ \begin{CD}
 \GSO^{\heartsuit\spadesuit}_4(\C)  \times_{\sim} \GSO^{\diamondsuit\clubsuit}_4(\C)  @>\iota>> \GSO_8(\C) \\
 @V{\rm std} \otimes {\rm std} VV @VV{\rm std}V \\
\GL_4(\C) \times \GL_4(\C) @>>> \GL_8(\C)  
\end{CD} \]
which should give rise to the following diagram of liftings:
 \[ \begin{CD} 
   [\mathcal{A}( (\GSpin^{\heartsuit\spadesuit}_4 \times \GSpin^{\diamondsuit\clubsuit}_4)/ \mathbb{G}_m^{\nabla})] @>\iota_*>> [\mathcal{A}(\GSpin_8)] \\
   @V{\rm std}_* \times {\rm std}_*VV   @VV{\rm std}_*V \\
  [ \mathcal{A}(\GL_4)] \times[ \mathcal{A}(\GL_4)] @>\boxplus>>[ \mathcal{A}(\GL_8)] 
   \end{CD} \] 
   so that
   \[  {\rm std}_* \circ  \iota_* =\boxplus  \circ ({\rm std}_* \times {\rm std}_*). \]
  On the RHS of this identity,  $\boxplus$  is the formation of isobaric sum (i.e. parabolic induction)  and 
  \[  {\rm std}_*: [ \mathcal{A}( (\GSpin^{\heartsuit\spadesuit}_4) ] = [\mathcal{A}(\GL_2^{\heartsuit} \times_{\det} \GL_2^{\spadesuit})] \longrightarrow [\mathcal{A}(\GL_4)] \]
  is the Rankin-Selberg lifting $\boxtimes$ from $\GL_2 \times \GL_2$ to $\GL_4$ constructed by Ramakrishnan \cite{Ra}. Hence, we have explained the construction of ${\rm std}_* \circ \iota_*$. 
  \end{itemize}
 
 \vskip 10pt
 
 By the above discussion, we thus have
 \[   {\rm std}_* \circ (\tilde{\rho}_2^{\vee})_* \circ \tilde{\iota}_* =  \boxplus \circ ({\rm std}_* \times {\rm std}_*) \circ f_* \]
 and all three functorial liftings on the RHS have been constructed. Thus, we have shown:
 \vskip 5pt
 
 \begin{prop}
 Let
 \[ \sigma:=  [(\pi_{\heartsuit} \otimes \pi_{\diamondsuit}) \boxtimes (\pi_{\spadesuit} \otimes \pi_{\clubsuit})] \in [\mathcal{A}(\GSO_4^{\heartsuit\diamondsuit} \times \GSO_4^{\spadesuit\clubsuit})]. \]
 so that 
 \[   ({\rm std}_* \circ p_* \circ  \tilde{\iota}_*) (\sigma) = [(\pi_{\heartsuit} \boxtimes \pi_{\diamondsuit})) \boxplus  (\pi_{\spadesuit} \boxtimes \pi_{\clubsuit})] \in [\mathcal{A}(\GL_8)]. \]
 Then
 \[    ({\rm std}_* \circ (\tilde{\rho}_2^{\vee})_* \circ  \tilde{\iota}_*) (\sigma) = 
[( \pi_{\heartsuit} \boxtimes \pi_{\spadesuit}) \boxplus  (\pi_{\diamondsuit} \boxtimes \pi_{\clubsuit})] \in [\mathcal{A}(\GL_8)] . \]
  \end{prop}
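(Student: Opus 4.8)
The plan is to chain together the three weak functorial liftings already computed in this subsection, exploiting the commutative diagram~\eqref{E:final-dual} of dual groups. First I would record that, since weak liftings act on near-equivalence classes through the corresponding maps of dual groups, the commutativity $\tilde{\rho}_2^{\vee} \circ \tilde{\iota} = \iota \circ f$ in~\eqref{E:final-dual} --- which is precisely why the branch of $\tilde{\iota}$ with $\tilde{\iota}(\mu_2^{\heartsuit\spadesuit}) = {\rm Ker}(\tilde{\rho}_2^{\vee})$ was selected, recalling $\mu_2^{\heartsuit\spadesuit} = {\rm Ker}(f)$ --- yields $(\tilde{\rho}_2^{\vee})_* \circ \tilde{\iota}_* = \iota_* \circ f_*$, hence
\[ {\rm std}_* \circ (\tilde{\rho}_2^{\vee})_* \circ \tilde{\iota}_* \,=\, {\rm std}_* \circ \iota_* \circ f_*, \]
as was observed just before the statement. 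Here $f_*$ is pullback of automorphic forms along the isogeny $f$ of Lemma~\ref{L:f-isogeny} (by Proposition~\ref{P:isogeny}), $\iota_*$ is Xu's endoscopic transfer, and ${\rm std}_*$ from $\GSpin_8$ to $\GL_8$ is the Asgari--Shahidi lift~\cite{AS}; all three are available, so the right-hand side makes sense unconditionally, even though $\tilde{\iota}_*$ is not yet known in full generality.

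Next I would evaluate the right-hand side at $\sigma$. Since $\sigma = [(\pi_{\heartsuit} \otimes \pi_{\diamondsuit}) \boxtimes (\pi_{\spadesuit} \otimes \pi_{\clubsuit})]$ indeed lies in $[\mathcal{A}(\GSO_4^{\heartsuit\diamondsuit} \times_{\sim} \GSO_4^{\spadesuit\clubsuit})]$ (so $\omega_{\heartsuit} = \omega_{\diamondsuit}$ and $\omega_{\spadesuit} = \omega_{\clubsuit}$), the explicit description of $f$ as the swap of the second and third $\GL_2$-factors gives, by Proposition~\ref{P:isogeny},
\[ f_*(\sigma) = [\pi_{\heartsuit} \otimes \pi_{\spadesuit}] \otimes [\pi_{\diamondsuit} \otimes \pi_{\clubsuit}] \in [\mathcal{A}((\GSpin_4^{\heartsuit\spadesuit} \times \GSpin_4^{\diamondsuit\clubsuit})/\mathbb{G}_m^{\nabla})]. \]
Then I would invoke the already-established identity ${\rm std}_* \circ \iota_* = \boxplus \circ ({\rm std}_* \times {\rm std}_*)$, in which, on each factor $\GL_2 \times_{\det} \GL_2$, the map ${\rm std}_*$ is Ramakrishnan's Rankin--Selberg lifting~\cite{Ra} sending $[\pi_{\heartsuit} \otimes \pi_{\spadesuit}]$ to $\pi_{\heartsuit} \boxtimes \pi_{\spadesuit}$ on $\GL_4$. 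Combining the two displays gives $({\rm std}_* \circ \iota_* \circ f_*)(\sigma) = [(\pi_{\heartsuit} \boxtimes \pi_{\spadesuit}) \boxplus (\pi_{\diamondsuit} \boxtimes \pi_{\clubsuit})]$, which is the assertion; the parallel identity for ${\rm std}_* \circ p_* \circ \tilde{\iota}_*$ quoted in the statement is obtained identically, running along the bottom row of~\eqref{final} and again using Ramakrishnan's lift.

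There is no deep obstacle here, as the argument is essentially diagram-chasing, but a few points deserve care. The main one is checking that the three candidates for $\tilde{\iota}$ really differ only by the image of ${\rm Ker}(f)$ in $Z_{\Spin_8}(\C)$ and that the chosen one makes~\eqref{E:final-dual} commute: this is the precise point at which the ``application of triality'' (the appearance of $\tilde{\rho}_2^{\vee}$ rather than $p$) enters, and an incorrect matching would produce a different pairing of the $\pi_{?}$'s. One must also track the central-character conditions through the $\mathbb{G}_m^{\nabla}$- and $\times_{\sim}$-quotients so that each intermediate object is a genuine near-equivalence class of automorphic representations, and keep in mind throughout that all the asserted equalities are equalities of Hecke--Satake families; consequently commutativity of the dual-group diagrams, together with the known existence of $f_*$, $\iota_*$, the Asgari--Shahidi transfer and Ramakrishnan's lift, is all that is needed, with no further automorphic input.
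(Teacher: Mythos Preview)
Your argument is correct and follows the paper's own route essentially verbatim: use the commutativity of~\eqref{E:final-dual} to rewrite ${\rm std}_* \circ (\tilde{\rho}_2^{\vee})_* \circ \tilde{\iota}_*$ as ${\rm std}_* \circ \iota_* \circ f_*$, compute $f_*$ as pullback along the isogeny of Lemma~\ref{L:f-isogeny}, and then factor ${\rm std}_* \circ \iota_*$ as $\boxplus \circ ({\rm std}_* \times {\rm std}_*)$ with each ${\rm std}_*$ given by Ramakrishnan's $\GL_2 \times \GL_2 \to \GL_4$ lift. One small terminological slip: it is $\tilde{\iota}_*$ (from $\GSO_4 \times_{\sim} \GSO_4$ to $\GSO_8$), not $\iota_*$, that is Xu's endoscopic transfer; the map $\iota_*$ you actually use is the one from $(\GSpin_4 \times \GSpin_4)/\mathbb{G}_m^{\nabla}$ to $\GSpin_8$, and you never need it on its own since you invoke only the composite ${\rm std}_* \circ \iota_*$ via Ramakrishnan --- exactly as the paper does.
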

\noindent     Hence, the application of triality    does not produce a fundamentally new case of functorial lifting here: its effect is to  produce a ``remixing" as depicted in the following equation:
  \[   2_{\heartsuit} \cdot 2_{\diamondsuit}  + 2_{\spadesuit} \cdot 2_{\clubsuit} = 2_{\heartsuit} \cdot 2_{\spadesuit}  + 2_{\diamondsuit} \cdot 2_{\clubsuit}. \]

 \vskip 5pt
 
 We end this subsection with a remark. In establishing the above proposition, we had only needed to establish the weak functorial lifting ${\rm std}_* \circ \iota_*$. In fact, the weak functorial lifting $\iota_*$, which is an endoscopic lifting in the setting of $\GSpin$-groups can also be established using the automorphic descent results of Hundley-Sayag \cite{HS}. 
 \vskip 5pt
 
 More precisely, 
 let 
   \[ \sigma: =  [ \pi_{\heartsuit} \boxtimes \pi_{\spadesuit}] \boxtimes  [ \pi_{\diamondsuit} \boxtimes \pi_{\clubsuit}] \]
   be an L-packet of cuspidal representations of $(\GSpin^{\heartsuit\spadesuit}_4 \times \GSpin^{\diamondsuit\clubsuit}_4)/ \mathbb{G}_m^{\nabla}$, so that we have the identity of central characters
   \[  \omega_{\heartsuit} \cdot \omega_{\spadesuit} = \omega_{\diamondsuit} \cdot \omega_{\clubsuit}  =: \mu. \]
    Then  as we saw above,
   \[    (\boxplus \circ ({\rm std}_*  \times {\rm std}_*))(\sigma)  = ( \pi_{\heartsuit} \boxtimes \pi_{\spadesuit}) \boxplus (\pi_{\diamondsuit} \boxtimes \pi_{\clubsuit}) \quad \text{on $\GL_8$.} \]
   The two  summands in the above equation satisfies
  \[    ( \pi_{\heartsuit} \boxtimes \pi_{\spadesuit})^{\vee}  =  ( \pi_{\heartsuit} \boxtimes \pi_{\spadesuit}) \cdot \mu^{-1} \quad 
  (\pi_{\diamondsuit} \boxtimes \pi_{\clubsuit})^{\vee} = (\pi_{\diamondsuit} \boxtimes \pi_{\clubsuit})\cdot \mu^{-1}, \]
  so that the (partial) twisted Rankin-Selberg L-functions
  \[  L^S(s, (\pi_{\heartsuit} \boxtimes \pi_{\spadesuit}) \times (\pi_{\heartsuit} \boxtimes \pi_{\spadesuit}) \cdot \mu^{-1} ) \quad \text{and} \quad
   L^S(s, (\pi_{\diamondsuit} \boxtimes \pi_{\clubsuit}) \times (\pi_{\diamondsuit} \boxtimes \pi_{\clubsuit}) \cdot \mu^{-1} ) \]
   have simple poles at $s=1$. On the other hand, the twisted exterior square L-function
  \[
  L^S(s,   \pi_{\heartsuit} \boxtimes \pi_{\spadesuit} , \wedge^2 \times  \mu^{-1} )
    = L^S(s, \pi_{\heartsuit}, {\rm Ad}) \cdot L^S(s, \pi_{\spadesuit}, {\rm Ad}) \]
    is holomorphic at $s=1$, and likewise for the twisted exterior square L-function of $\pi_{\diamondsuit} \boxtimes \pi_{\clubsuit}$. 
     Hence, the twisted symmetric square L-functions
     \[    L^S(s,   \pi_{\heartsuit} \boxtimes \pi_{\spadesuit} , {\rm Sym}^2 \times  \mu^{-1} ) \quad \text{and} \quad 
     L^S(s,   \pi_{\diamondsuit} \boxtimes \pi_{\clubsuit} , {\rm Sym}^2 \times  \mu^{-1} ) \]
    have poles at $s=1$.  By \cite{HS}, one concludes that $\boxplus \circ  ({\rm std}_* \times {\rm std}_*)(\sigma)$ can be descended back to $\GSpin_8$.
    In other words, there is a globally generic automorphic representation $\iota_*(\sigma)$ on $\GSpin_8$ such that
    \[  {\rm std}_* ( \iota_*(\sigma)) = ( \pi_{\heartsuit} \boxtimes \pi_{\spadesuit}) \boxplus (\pi_{\diamondsuit} \boxtimes \pi_{\clubsuit}) \quad \text{on $\GL_8$.} \]
   
    \vskip 10pt

\subsection{\bf  Where art thou,  triality?}
\label{sect:shakespear}
  Neither ${\rm GSO}_8$ nor ${\rm GSpin}_8$ has an automorphism inducing 
  triality on $\PGSO_8$ or $\Spin_8$, because of the presence of a unique central $\mathbb{G}_m$ subgroup. 
  As promised in the end of the introduction of Section~\ref{Sect:spinGSp6etc}, we now first explain how to restore an order $3$ symmetry on a variant of $\GSpin_8$. 
  In this section, our groups are defined and split over an arbitrary field $F$, say with ${\rm char}\, F \neq 2$.

\vskip 5pt

Let $E$ be the set of order two elements in the center $Z$ of ${\rm Spin}_8$. We have $|E|=3$. 
For bookkeeping reasons we introduce, for each $e \in E$, a copy of $\mathbb{G}_m$, ${\rm SO}_8$ and ${\rm GSO}_8$, that we denote respectively by $\mathbb{G}_m^e$, ${\rm SO}_8^e$ and ${\rm GSO}_8^e$; we 
also use the notation $\mathbb{G}_m^E$ for $\prod_{e \in E} \mathbb{G}_m^e$.  We have an embedding $$\iota : Z \rightarrow \mathbb{G}_m^E$$ sending any $e \in E \subset Z$
to the element of $\mathbb{G}_m^E$ with $e$-component $1$, and two other components $-1$. 
Using this embedding, we set:  
\begin{equation}
\label{def:GGGSpin8}
\GGGSpin_8 = (\mathbb{G}_m^E \times \Spin_8)/ (\iota \times {\rm id})(Z).
\end{equation}
(pronounced {\it tri-spin}). Let us fix a triality automorphism $\theta$ of ${\rm Spin}_8$ as in \S\ref{SS:VQ}. 
It naturally induces a $3$-cycle on $E$. 
The automorphism $\theta$ thus trivially extends to $\GGGSpin_8$
by permuting the factors of $\mathbb{G}_m^E$ according to the same cycle, and we still denote by $\theta$ this extension. 

\vskip 5pt

 Let $e \in E$. It will be convenient to set $e'=\theta(e)$ and $e''=\theta^2(e)=\theta^{-1}(e)$.
We have then $E = \{e, e', e''\}$ and we may write any element $t \in \mathbb{G}_m^E$ as $(t_e,t_{e'},t_{e''})$. We finally set
$$\GSpin_8^e = \left( \mathbb{G}^e_m \times {\rm Spin}_8 \right) / \langle (-1,e) \rangle.$$
There is a unique pair of morphisms $j_e$ and $\rho_e$ as in the sequence below
\[ 
\begin{CD}
\GSpin_8^e  @>{j_e}>>  \GGGSpin_8 @>{\rho_e}>> \GSO_8^e
\end{CD}
\]
and satisfying the following properties: \begin{itemize}
\item The morphism $j_e$ induces the identity on the natural $\Spin_8$ subgroups on both sides.

\vskip 5pt

\item For $t \in \mathbb{G}_m^e$, the element $j_e(t) \in \mathbb{G}_m^E$ has $e$-component $1$, and other two components $t$. 
\end{itemize}
Such a morphism $j_e$ exists as we have $(1,-1,-1) \equiv e' e'' = e$ in $\GGGSpin_8$.
\begin{itemize}
\vskip 5pt

\item Over the natural subgroup $\Spin_8$ inside $\GGGSpin_8$, the morphism $\rho_e$ coincides with the morphism $\rho_i : \Spin_8 \rightarrow \SO_8$ defined in \S\ref{SS:VQ} for the unique $i$ such that $\rho_i(e)=1$. 

\vskip 5pt

\item For $t \in \mathbb{G}_m^e$, the element $\rho_e(t) \in {\rm GSO}_8^e$ is the multiplication by the scalar $t_{e'}/t_{e''}$. 
\end{itemize}
Such a morphism $\rho_e$ exists, as for all $f \in E$ and $t=\iota(f)$, 
the scalar $t_{e'}/t_{e''}\rho_e(f)$ is always $1$ (we have $\rho_i(e)=1$ and $\rho_i(e')=\rho_i(e'')=-1$). 
Summarizing, we have for any $e \in E$ the following diagram:

 \[
\xymatrix@R=10pt{
 \GSpin^{e}_8   \ar[rd]^{j_{e}}   &    &      \GSO^{e}_8 \\
 \GSpin^{e'}_8 \ar[r]    &   \GGGSpin_8 \ar[ru]^{\rho_{e}}  \ar[r]  \ar[rd]_{\rho_{e''}} &    \GSO^{e'}_8   \\
 \GSpin^{e''}_8 \ar[ru]_{j_{e''}}     &       &      \GSO^{e''}_8 \\ 
 }
\]
The triality $\theta$ of $\Spin_8$ induces for each $e$ an isomorphism $\GSpin_8^e \overset{\ssim}{\longrightarrow} \GSpin_8^{e'}$ that we may also harmlessly denote $\theta$.
We then have by construction the identities 
\begin{equation}
\label{eq:thetajandrho}
\rho_{e} = \rho_{e'} \circ \theta \, \, \, {\rm and}\, \, \, j_{e} = j_{e'} \circ \theta.
\end{equation}
Observe moreover that the map
 \[ \rho_e \circ j_e : \GSpin_8^e \longrightarrow \GSO_8^e \]
is trivial on $\mathbb{G}_m^e$ and factors through the natural inclusion $\SO_8^e  \subset \GSO_8^e$ (in particular, it is not an isogeny), whereas the morphism
 \[    \rho_{e'} \circ j_{e} : \GSpin_8^e \longrightarrow \GSO_8^{e'} \]
coincides with $\rho_{e'}$ on ${\rm Spin}_8$ and maps $t \in \mathbb{G}_m$ to the multiplication by $t$ in ${\rm GSO}_8^{e'}$. It follows that $\rho_{e'} \circ j_{e}$ is an instance of the map $\tilde{\rho}_2$ in \eqref{E:tildef}. 
More precisely, if we label $E$ by $\{1,2,3\}$ as in \S~\ref{SS:VQ}, with $e$ labelled by $1$ and $\theta$ inducing the cycle $(1\,\,2\,\,3)$ on $E$, then we have $$\tilde{\rho}_2=\rho_{e'} \circ j_{e}.$$ The promised factorisation~\eqref{eq:factorisationtilderho2}
follows then from the identity 
$$\rho_{e'}  \circ j_e = \rho_e \circ \theta^{-1} \circ j_e,$$
which in turn follows from the first equation in \eqref{eq:thetajandrho}. \vskip 5pt

    \vskip 5pt
  
 Without going in too much details, we finally discuss how the group $\GGGSpin_8$ and its automorphism $\theta$ occur when studying the exceptional group $$H={\rm GE}_6 = (\mathbb{G}_m \times {\rm E}_6^{sc})/\mu_3^\Delta,$$
 whose derived group is simply-connected of type $E_6$. Looking at the Dynkin diagram of $E_6$, one sees that there is a (non-maximal) parabolic subgroup $P =MN$ of $H$ whose Levi subgroup $M$ is of semisimple type $D_4$. Indeed, the derived subgroup of $M$ is isomorphic to $\Spin_8$, and it can be shown that we have 
 $$M \simeq \GGGSpin_8.$$
 The associated Weyl group $N_H(M)/M$ is isomorphic to $S_3$. Hence, there is an order $3$ element $h$ of $H$ normalizing $M$ and we can show that we may choose this element and the isomorphism above so that $h$ induces the automorphism $\theta$ of $\GGGSpin_8$. Let us also mention that the adjoint action of $M$ on ${\rm Lie}(N)$ decomposes into the direct sum of the three $8$-dimensional irreducible representations $\rho_e \oplus \rho_{e'} \oplus \rho_{e''}$ of $M$.
 
\vskip 5pt

\section{\bf An Arithmetic Application: Siegel modular forms for ${\rm Sp}_6(\Z)$ }
\label{S:Siegel}
In this section, we specialize the results of Section~\ref{subsect:spinliftingnequals3} to the case of automorphic representations of ${\rm PGSp}_6$ over $\Q$ 
generated by holomorphic Siegel modular forms for the full Siegel modular group ${\rm Sp}_6(\Z)$. In this setting, 
we shall show that Theorems~\ref{thm:weakspinlift} and~\ref{T:strong} continue to hold 
without the genericity  (of A-parameters) assumption there, so that the Spin lifting always exists on $\GL_8$.
In addition, we shall determine precisely the shape of the A-parameter of the Spin lifting on $\GL_8$; this amounts to showing a {\it cuspidality criterion} for the Spin lifting. 
These improvements will be possible by the use of Galois representations arguments, 
and in particular, of the Minkowski theorem 
({\it a nontrivial number field always has a ramified prime}). 
We conclude with an application to spinor $L$-functions.\ps
\vskip 5pt

\subsection{\bf Cuspidal representations of Siegel type}
In all of this section, we assume that $\pi$ is a cuspidal automorphic representation of ${\rm PGSp}_6$ over $\Q$ 
generated by a holomorphic Siegel modular eigenform\footnote{We mean here that $f$ is an eigenform for the full Hecke algebra of ${\rm PGSp}_{6}$, not only of ${\rm Sp}_6$.} $f$ for ${\rm Sp}_6(\Z)$. 
Such a $\pi$ will be called {\it of Siegel type}. 
In other words, we have $\pi_p$ unramified for each prime $p$ 
and $\pi_\infty$ is a holomorphic discrete series. 
In the classical language (see e.g. \cite{vdg}), this means that 
the weights $k_1 \geq k_2 \geq k_3$ of $f$, which is possibly vector-valued, 
satisfy $k_3 \geq 4$.
The relation between those $k_i$ and the infinitesimal character 
${\rm c}(\pi_\infty) \subset \mathfrak{spin}_7(\C)$ of $\pi_\infty$ is as follows: 
the $7$ eigenvalues of ${\rm std}({\rm c}(\pi_\infty))$ are 
$0,\pm a, \pm b, \pm c$ with 
\begin{equation}
\label{eq:abc}
(a,b,c) \,=\,(k_1-1,k_2-2,k_3-3),
\end{equation}
and the $8$ eigenvalues of ${\rm spin}({\rm c}(\pi_\infty))$ are 
$\pm w_1,\pm w_2,\pm w_3,\pm w_4$ with
\begin{equation}
\label{eq:weights}
(w_1,w_2,w_3,w_4)=(\,(a+b+c)/2,\,(a+b-c)/2, \,(a-b+c)/2, \,|a-b-c|/2).
\end{equation}
As $\sum_i k_i \equiv 0 \bmod 2$, 
the $w_i$ are in $\Z$. Moreover, one has $w_1>w_2>w_3>w_4 \geq 0$. \par 
\medskip

Let $\pi$ be of Siegel type.
We denote by $\Psi(\pi,{\rm std})$ the standard Arthur parameter of $\pi|_{{\rm Sp}_6}$. 
A detailed examination of all the possibilities for $\Psi(\pi,{\rm std})$ 
has been carried out in \cite[\S 9.3]{CR} (assuming the main result in \cite{AMR}): 
see also \cite[\S 4.2.2]{taibisiegel} and \cite[\S 8.5.1]{CL}. 
In this situation, in which standard Galois representations are available 
by the works of many authors (Arthur, Chenevier, Clozel, Labesse, Harris, Shin, Taylor..., see the discussion in~\cite[\S 8.2.16]{CL}), 
it is known that $\Psi(\pi,{\rm std})$ is generic if, and only if, the representation $\pi$ is tempered (Clozel, Shin). 
Of course, note that $\pi$ itself is never generic, since $\pi_\infty$ is not.  \ps
\vskip 5pt

\subsection{\bf Generic case}
Let us first assume $\Psi(\pi,{\rm std})$ is generic and
 denote by $\Psi(\pi,{\rm spin})$ the spin lift of $\pi$ to ${\rm GL}_8$ 
furnished by Theorem~\ref{thm:weakspinlift}. 
\vskip 5pt

By \cite[\S 9.3]{CR}, there are actually only two possibilities for $\Psi(\pi,{\rm std})$: 
either it is cuspidal (the most important case), or we have ({\it endoscopic tempered case})
\begin{equation}
\label{eq:tempendosiegelsp6} \Psi(\pi,{\rm std}) \,=\, (\pi_1 \boxtimes \pi_2) \boxplus\, {\rm Sym}^2 \pi_3,
\end{equation}
where $\pi_1,\pi_2,\pi_3$ are cuspidal automorphic representations of ${\rm PGL}_2$ 
generated by holomorphic cuspidal eigenforms for ${\rm SL}_2(\Z)$ 
(see {\it loc.\ cit.} for the precise constraints on their weights in terms of the weights of $f$), 
with $\pi_1 \not \simeq \pi_2$. Here $\pi_1 \boxtimes \pi_2$ and ${\rm Sym}^2 \pi_3$ denote of course 
the automorphic tensor product and symmetric square, 
constructed respectively by Ramakrishnan \cite{Ra} and Gelbart-Jacquet \cite{GJ}, 
and they are cuspidal. 
\vskip 5pt

In both these cases, the automorphic representation $\Theta(\pi')$ produced by global similitude theta lifting in the proof of Theorem \ref{thm:weakspinlift} is a tempered cuspidal representation of $\PGSO_8$ and hence so is its pullback $f_2^*(\Theta(\pi'))$ on $\SO_8$.
 By Theorem \ref{T:strong}(iii), we may view both 
$\Psi(\pi,{\rm std})$ and $\Psi(\pi,{\rm spin})$ 
as a formal direct sum of self-dual orthogonal cuspidal automorphic representations $\pi_i$ of some ${\rm GL}_{n_i}$ over $\Q$, 
with $\sum_i n_i=7$ or $8$ accordingly. 
The $\pi_i$ have level $1$ ({\it i.e.} are unramified at all finite places), by Thm.~\ref{T:strong} in the spin case, 
hence have trivial central characters (they are selfdual). 
Also, in both cases the $\pi_i$ are algebraic, by Thm.~\ref{T:strong} and ~\eqref{eq:weights} in the spin case 
(see \cite[Prop. 8.2.13]{CL}).
\par \smallskip

\vskip 5pt

\subsection{\bf Shape of Spin lifting: tempered endoscopic case}
We would like to determine precisely what the isobaric sum $\Psi(\pi, {\rm spin}) = \boxplus_i \pi_i$ looks like. In this subsection, we handle the case when $\pi$ satisfies~\eqref{eq:tempendosiegelsp6}; the case when $\Psi(\pi, {\rm std})$ is cuspidal is handled in \S \ref{SS:cuspidal}.

 \begin{prop} 
 \label{prop:tempendosiegelsp6spin}If $\Psi(\pi,{\rm std})$ satisfies~\eqref{eq:tempendosiegelsp6}, then we have 
\begin{equation}
\label{eq:spinparendosiegeltempered}
\Psi(\pi,{\rm spin}) = (\pi_1 \boxplus \pi_2) \boxtimes \pi_3 = (\pi_1 \boxtimes \pi_3) \boxplus (\pi_2 \boxtimes \pi_3).
\end{equation}
\end{prop}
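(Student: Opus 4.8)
The plan is to determine $\Psi(\pi,{\rm spin})$ first \emph{up to an unramified quadratic twist}, by a branching computation for ${\rm Spin}_7$, and then to remove the twist by an arithmetic argument. Note first that we are in the tempered case, so $\pi$ is cuspidal, $\pi_\infty$ is tempered with regular infinitesimal character (the weights satisfy $k_3 \geq 4$), and $\Psi(\pi,{\rm std}) = (\pi_1 \boxtimes \pi_2) \boxplus {\rm Sym}^2 \pi_3$ contains no copy of ${\bf 1}$, neither summand being one-dimensional (recall $\pi_3$ is not dihedral, being of level $1$, so ${\rm Sym}^2\pi_3$ is cuspidal). Hence Theorem~\ref{T:strong}\,(i) and (iii) apply: $\Psi(\pi,{\rm spin})$ is a multiplicity-free isobaric sum of self-dual orthogonal cuspidal representations of general linear groups, each of level $1$, and we only have to identify it.

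The branching step. Viewed as a morphism into ${\rm SO}_7(\C)$, the parameter $\Psi(\pi,{\rm std})$ factors through the stabiliser ${\rm SO}_4(\C) \times {\rm SO}_3(\C)$ of an orthogonal splitting $\C^7 = \C^4 \oplus \C^3$, the ${\rm SO}_4$-factor carrying the parameter of $\pi_1 \boxtimes \pi_2$ and the ${\rm SO}_3$-factor that of ${\rm Sym}^2\pi_3$. I would then use that the preimage of ${\rm SO}_4 \times {\rm SO}_3$ in ${\rm Spin}_7$ is the image of the natural morphism ${\rm Spin}_4 \times {\rm Spin}_3 \to {\rm Spin}_7$, that is (via ${\rm Spin}_4 \simeq {\rm SL}_2 \times {\rm SL}_2$ and ${\rm Spin}_3 \simeq {\rm SL}_2$) a morphism
\[ \nu:\ {\rm SL}_2^{(1)} \times {\rm SL}_2^{(2)} \times {\rm SL}_2^{(3)} \longrightarrow {\rm Spin}_7(\C), \]
with ${\rm SL}_2^{(i)}$ dual to $\pi_i$. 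By the branching formula of \S\ref{SS:spingp} (the case ``$\dim V$ and $\dim V_1$ odd'', applied to $\C^7 = \C^3 \oplus \C^4$), the $8$-dimensional spin representation of ${\rm Spin}_7$ restricts along $\nu$ to $({\rm std}^{(1)} \oplus {\rm std}^{(2)}) \boxtimes {\rm std}^{(3)}$: indeed $S_{\C^3}$ is the standard representation of ${\rm Spin}_3 = {\rm SL}_2^{(3)}$, and the two half-spin representations of ${\rm Spin}_4$ are ${\rm std}^{(1)} \boxtimes {\bf 1}$ and ${\bf 1} \boxtimes {\rm std}^{(2)}$. The functorial lift associated to this representation is exactly $(\pi_1 \boxplus \pi_2) \boxtimes \pi_3 = (\pi_1 \boxtimes \pi_3) \boxplus (\pi_2 \boxtimes \pi_3)$.

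To pass from this to $\Psi(\pi,{\rm spin})$ I would bring in Galois representations, as anticipated in the introduction to this section. Since $\pi$ is cuspidal, tempered and regular cohomological, it has an associated spin $\ell$-adic representation $\rho : G_\Q \to {\rm GSpin}_7(\overline{\Q}_\ell)$, semisimple, unramified outside $\ell$, with ${\rm spin}\circ\rho({\rm Frob}_p)$ conjugate to ${\rm spin}(c(\pi_p))$ for all $p \neq \ell$ (by the work of Kret--Shin). On the other hand, the $\ell$-adic representations attached to $\pi_1, \pi_2, \pi_3$ assemble through $\nu$, after the customary twists (one works inside ${\rm GSpin}_7$ keeping track of similitudes, to accommodate half-integral Tate twists), into a \emph{second} lift $\rho'$ of the standard Galois representation ${\rm std}\circ\rho \simeq {\rm std}\circ\rho'$, again unramified outside $\ell$. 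Any two lifts to ${\rm GSpin}_7$ of a fixed ${\rm GSO}_7$-valued representation differ by a quadratic character $\eta$ of $G_\Q$, necessarily unramified outside $\ell$; hence $\rho \simeq \rho' \otimes \eta$, and composing with ${\rm spin}$ and using the branching step we get ${\rm spin}\circ\rho \simeq \big((\pi_1 \boxtimes \pi_3) \boxplus (\pi_2 \boxtimes \pi_3)\big) \otimes \eta$ as Galois representations. By strong multiplicity one for $\GL_8$ this identifies $\Psi(\pi,{\rm spin})$ with $\big((\pi_1 \boxtimes \pi_3) \boxplus (\pi_2 \boxtimes \pi_3)\big) \otimes \eta$.

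It then remains to see that $\eta = 1$. Here $\pi_i \not\simeq \pi_3$ for $i = 1,2$ (otherwise $\pi_i \boxtimes \pi_3$, hence $\Psi(\pi,{\rm spin})$, would contain ${\bf 1}$, against Theorem~\ref{T:strong}\,(iii)), so $\pi_i \boxtimes \pi_3$ is a cuspidal representation of $\GL_4$ of level $1$ by Ramakrishnan's theorem. As $\Psi(\pi,{\rm spin})$ has level $1$, so does each $(\pi_i \boxtimes \pi_3) \otimes \eta$, and comparing conductors forces $\eta$ to be unramified at every finite place; being moreover of finite order, $\eta$ is trivial by Minkowski's theorem. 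This gives \eqref{eq:spinparendosiegeltempered}. The only genuinely delicate point is the step ``${\rm std}\circ\rho$ determines ${\rm spin}\circ\rho$ up to a \emph{global} quadratic character'': locally at each unramified $p$ the Satake class $c(\pi_p) \in {\rm Spin}_7(\C)$ is pinned down by its image in ${\rm SO}_7(\C)$ only modulo the central $\mu_2 = \ker({\rm Spin}_7 \to {\rm SO}_7)$, so the eigenvalues of ${\rm spin}(c(\pi_p))$ are a priori known only up to a common sign $\epsilon_p$; it is precisely the existence of the \emph{global} object $\rho$ that organises the $\epsilon_p$ into the values of a single Hecke character, which is why the arithmetic input cannot be dispensed with here.
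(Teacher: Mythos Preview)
Your approach is essentially the paper's: both construct two ${\rm Spin}_7$- (or ${\rm GSpin}_7$-)valued $\ell$-adic lifts of the standard Galois representation---one attached to $\pi$ directly, one assembled from the $\pi_i$ via $\nu$---observe they differ by a global quadratic character, and kill that character by Minkowski. The paper cites Ta\"ibi rather than Kret--Shin for the spin Galois representation (Ta\"ibi's construction is already ${\rm Spin}_7$-valued in level one) and eliminates the character at $\ell$ via crystallinity rather than your automorphic conductor comparison, but both variants work. One small slip: your justification of $\pi_i \not\simeq \pi_3$ appeals to Theorem~\ref{T:strong}(iii) to exclude ${\bf 1}$ from $\Psi(\pi,{\rm spin})$, but that theorem does not preclude ${\bf 1}$ as a summand, and at that stage you only know $\Psi(\pi,{\rm spin})$ up to the twist by $\eta$ anyway; the conclusion is nonetheless correct (e.g.\ by the Archimedean infinitesimal character, since $\pi_i\simeq\pi_3$ would force two of the $w_j$ to vanish), and in fact your conductor argument for $\eta=1$ already goes through applied to any level-one cuspidal constituent, so the point is not actually needed.
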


This is a very natural guess, since the spin representation of ${\rm Spin}_7$, when restricted to a natural 
\begin{equation}
\label{eq:morphnu}
\nu : {\rm Spin}_4 \times {\rm Spin}_3 \rightarrow {\rm Spin}_7,
\end{equation}
 is isomorphic to the tensor product of the direct sum of the two spin representations of ${\rm Spin}_4 \simeq {\rm SL}_2 \times {\rm SL}_2$, with the spin representation of ${\rm Spin}_3 \simeq {\rm SL}_2$ (\S \ref{SS:spingp}). 
But this only implies that, for each prime $p$, both Satake parameters at $p$ on the left and right sides of \eqref{eq:spinparendosiegeltempered} agree {\it up to a sign}, and our main claim is that this sign is $+1$. 
\par \medskip

For the proof of this proposition, and of others below, we will use certain Galois representations that we first briefly review.
Fix a prime $\ell$ and, for convenience, an isomorphism $\iota : \C \overset{\ssim}{\rightarrow} \overline{\Q}_\ell$. 
For any $\pi$ of Siegel type, recall that by the aforementioned collective works, we have a continuous semisimple Galois representation 
$${\rm r}_{\pi,{\rm std},\iota} : {\rm Gal}(\overline{\Q}/\Q) \rightarrow {\rm SO}_7(\overline{\Q}_\ell),$$
which is unramified outside $\ell$ and such that for all $p \neq \ell$ the semi-simplified conjugacy class
of ${\rm r}_{\pi,{\rm std},\iota}({\rm Frob}_p)^{\rm ss}$ coincides with $\iota({\rm c}(\pi'_p))$, where $\pi'$ is any level one automorphic constituent of $\pi|_{{\rm Sp}_6}$. Recall that we have ${\rm c}(\pi'_p)=\rho({\rm c}(\pi_p))$ where $\rho : {\rm Spin}_7 \rightarrow {\rm SO}_7$ is the standard morphism. The representation ${\rm r}_{\pi,{\rm std},\iota}$ is unique up to conjugacy and known to be crystalline at $\ell$ in the sense of Fontaine. \par

On the other hand, by\footnote{We stress that the Minkowski theorem is one of the ingredients of Ta\"ibi's construction of ${\rm r}_{\pi, {\rm spin},\iota}$.} \cite[Thm. 2]{taibiAn}, there exists also a continuous semisimple morphism\footnote{
We use here that for $g=3$, $g(g+1) \equiv 0 \bmod 4$. 
For general ${\rm PGSp}_{2g}$, Ta\"ibi's statement involves ${\rm GSpin}$ instead of ${\rm Spin}$.}
$${\rm r}_{\pi, {\rm spin},\iota}  : {\rm Gal}(\overline{\Q}/\Q) \rightarrow {\rm Spin}_7(\overline{\Q}_\ell)$$
which is unramified outside $\ell$ and satisfies
\begin{equation}
\label{eq:proptaibi}
{\rm r}_{\pi, {\rm spin},\iota}({\rm Frob}_p)^{\rm ss} = \iota({\rm c}(\pi_p)), \, \,\, \textrm{for all primes $p \neq \ell$.}
\end{equation}
Again, this morphism 
${\rm r}_{\pi, {\rm spin},\iota}$ is unique up to conjugacy, and we may assume it satisfies $\rho \circ {\rm r}_{\pi, {\rm spin},\iota} =  {\rm r}_{\pi, {\rm std},\iota}$. Ta\"ibi also shows that ${\rm r}_{\pi, {\rm spin},\iota}$ is crystalline at $\ell$.

\begin{proof} (of Prop.~\ref{prop:tempendosiegelsp6spin})
Fix $\ell$ and $\iota$ as above. For $i=1,2,3$, let us denote ${\rm r}_{i} :  {\rm Gal}(\overline{\Q}/\Q) \rightarrow {\rm GL}_2(\overline{\Q}_\ell)$
the semisimple Galois representation attached by Deligne to $\pi_i$, or more precisely, to the algebraic representation $\pi'_i:=\pi_i|.|^{1/2}$ of ${\rm GL}_2$, and to $\iota$. It is unramified outside $\ell$ and satisfies ${\rm r}_{i}({\rm Frob}_p)^{\rm ss} = \iota({\rm c}((\pi'_i)_p))$ for $p\neq \ell$, hence ${\rm r}_{i}^\vee \simeq {\rm r}_{i} \otimes \omega_\ell^{-1}$ where $\omega_\ell$ is the $\ell$-adic cyclotomic character. The morphism $\nu$ (Formula~\ref{eq:morphnu}) extends to $\nu : {\rm GSpin}_4 \times {\rm GSpin}_3 \rightarrow {\rm GSpin}_7$, and we have exceptional 
isomorphisms $${\rm GSpin}_4 \simeq ({\rm GL}_2 \times {\rm GL}_2)^{\det_1 = \det _2}\,\,\,\textrm{and}\,\,\, {\rm GSpin}_3 \simeq {\rm GL}_2.$$ 
Using these isomorphisms, we may view $({\rm r}_{1} ,{\rm r}_{2} )$ and ${\rm r}_{3}^\vee$ as ${\rm GSpin}_n(\overline{\Q}_\ell)$-valued with $n=4,3$ respectively, hence composing with $\nu$ we obtain a semisimple morphism 
$\sigma   : {\rm Gal}(\overline{\Q}/\Q) \rightarrow {\rm GSpin}_7(\overline{\Q}_\ell)$ satisfying by construction
\begin{equation}
\label{eq:cookrho}
\left\{
\begin{array}{c}
{\rm std} \circ \sigma \,\, \simeq \,\,{\rm r}_{1} \otimes {\rm r}_{2}^\vee \,\oplus\, {\rm Sym}^2 {\rm r}_3 \otimes \omega_\ell^{-1}\, \,\simeq\,\, {\rm std} \circ {\rm r}_{\pi, {\rm std},\iota},\\ \\
{\rm spin} \circ \sigma \,\, \simeq \,\,({\rm r}_{1} \oplus {\rm r}_{2})\,\otimes {\rm r}_3^\vee\,\,\simeq\,\, ({\rm r}_{1} \otimes {\rm r}_{3}^\vee)\, \oplus\,( {\rm r}_{2} \otimes {\rm r}_{3}^\vee).
\end{array}\right.
\end{equation}
\par \noindent In particular, we have ${\rm sim}\, \circ \sigma = \det {\rm r}_1 \det {\rm r}_3^\vee= \omega_\ell \cdot \omega_\ell^{-1}=1$, {\it i.e.} ${\rm Im}\, \sigma  \subset {\rm Spin}_7(\overline{\Q}_\ell)$.
Up to conjugating $\sigma$ if necessary, we may thus assume $\rho \circ \sigma = {\rm r}_{\pi, {\rm std},\iota}$.
But this implies that there is a continuous character $\chi : {\rm Gal}(\overline{\Q}/\Q) \rightarrow \{ \pm 1\}$ satisfying
$$\sigma \,\,= \,\,\chi\,{\rm r}_{\pi, {\rm spin},\iota}.$$
As $\sigma$ and ${\rm r}_{\pi, {\rm spin},\iota}$ are unramified outside $\ell$ (resp. crystalline at $\ell$), so is $\chi$, since $\chi$ is a subrepresentation of $({\rm spin} \circ \sigma) \otimes ({\rm spin} \circ {\rm r}_{\pi, {\rm spin},\iota})^\vee$. As $\chi$ has finite order, it is thus unramified at $p$, hence everywhere. But there is no quadratic extension of $\Q$ unramified at all finite primes, so we have $\chi=1$ and $\sigma ={\rm r}_{\pi, {\rm spin},\iota}$. Observe that we have
$$(\pi_1 \boxplus \pi_2) \boxtimes \pi_3 = (\pi'_1 \boxplus \pi'_2) \boxtimes (\pi_3')^\vee.$$
By \eqref{eq:proptaibi} and the definition of $\sigma$, this proves ${\rm c}(\pi_p)=({\rm c}((\pi_1)p) \boxplus {\rm c}((\pi_2)_p) \boxtimes {\rm c}((\pi_3)_p)$ for all $p \neq \ell$.
We conclude by this same argument applied to any other $\ell$. 
\end{proof}

\vskip 5pt

\subsection{\bf Shape of Spin lifting: cuspidal $\Psi(\pi, {\rm std})$} \label{SS:cuspidal}
We now consider the shape of the Spin lifting  $\Psi(\pi, {\rm spin})$ when $\Psi(\pi, {\rm std})$ is (tempered) cuspidal.
Here, the  exceptional group ${\rm G}_2$ plays a crucial role. Recall that there is an embedding (well-defined up to conjugacy) 
$$\eta : {\rm G}_2(F) \rightarrow {\rm Spin}_7(F)$$ 
over any algebraically closed field $F$ (below, of char. 0). As is well-known, the stabilizers in ${\rm Spin}_7(F)$ 
of the non-isotropic vectors in its Spin representation (which we recall is orthogonal) are exactly the conjugate of 
$\eta({\rm G}_2(F))$.

\begin{definition} 
\label{def:typeG2} 
Let $\pi$ be an automorphic representation of ${\rm GSp}_6$ over the number field $k$. 
We say that $\pi$ is of {\rm type ${\rm G}_2$} if for almost all finite places $v$ of $k$, the Satake parameter ${\rm c}(\pi_v)$ meets 
$\eta({\rm G}_2(\C))$. 
\end{definition}

If $r : \Gamma \rightarrow {\rm Spin}_7(F)$ is any group homomorphism, we also say that  $r$ is {\it of type ${\rm G}_2$} if $r(\Gamma)$ is conjugate to a subgroup of $\eta({\rm G}_2(F))$.

\begin{prop} 
\label{prop:critG2}
Let $\pi$ be a cuspidal representation of ${\rm GSp}_6$ over $\Q$ of Siegel type.
The following are equivalent:
\begin{itemize}
\item[(i)] $\pi$ is of type ${\rm G}_2$,\ps
\item[(ii)] ${\rm spin}({\rm c}(\pi_p))$ has the eigenvalue $1$ for almost all primes $p$,\ps
\item[(iii)] $\Psi(\pi,{\rm spin}) ={\bf 1}\boxplus \Psi(\pi,{\rm std})$,\ps
\item[(iv)] for some $\ell$ and $\iota$, ${\rm r}_{\pi,{\rm spin},\iota}$ is of type ${\rm G}_2$,\ps
\item[(v)] for all $\ell$ and $\iota$, ${\rm r}_{\pi,{\rm spin},\iota}$ is of type ${\rm G}_2$.\ps
\end{itemize}  
\end{prop}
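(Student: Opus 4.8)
The plan is to prove the chain of equivalences by exploiting the fact that a homomorphism into $\mathrm{Spin}_7$ is of type $\mathrm{G}_2$ exactly when its spin representation fixes a nonzero vector in the $8$-dimensional orthogonal spin space, together with the Galois representation $\mathrm{r}_{\pi,\mathrm{spin},\iota}$ and the group-theoretic description of stabilizers recalled just before the proposition. The cleanest route is the cycle $(i)\Rightarrow(ii)\Rightarrow(iii)\Rightarrow(i)$ for the "automorphic" statements, the cycle $(iv)\Leftrightarrow(v)$ plus $(iv)\Rightarrow(i)$ and $(iii)\Rightarrow(v)$ to bring in the Galois side, so that all five become equivalent. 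I will record only the non-obvious implications.

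First I would handle $(i)\Leftrightarrow(ii)$. If $\pi$ is of type $\mathrm{G}_2$, then for almost all $p$ the class $\mathrm{c}(\pi_p)$ meets $\eta(\mathrm{G}_2(\C))$, and since $\mathrm{G}_2$ fixes a vector in the $7$-dimensional representation $\mathbb{O}_0$, hence in $S_V = V$ under $\eta$ (the spin representation of $\mathrm{Spin}_7$ restricted to $\mathrm{G}_2$ contains the trivial representation; this is \S\ref{SS:spingp} / \S\ref{SS:octonion}), $\mathrm{spin}(\mathrm{c}(\pi_p))$ has eigenvalue $1$, giving $(ii)$. Conversely, if $\mathrm{spin}(\mathrm{c}(\pi_p))$ has eigenvalue $1$ for almost all $p$, then $\mathrm{c}(\pi_p)$ stabilizes a nonzero vector $w_p$ in the spin space; this vector is automatically non-isotropic for all but finitely many $p$ once one knows $\pi$ is not a degenerate case — more robustly, I would argue on the Galois side (see below) that the image lands in a single conjugate of $\eta(\mathrm{G}_2)$, and then deduce $(i)$. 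Next, $(ii)\Leftrightarrow(iii)$: by Theorem~\ref{thm:weakspinlift} and Theorem~\ref{T:strong} (iii), together with the generic/tempered analysis of \S\ref{SS:cuspidal} when $\Psi(\pi,\mathrm{std})$ is cuspidal (and Proposition~\ref{prop:tempendosiegelsp6spin} in the endoscopic case, though there $\pi$ is not of type $\mathrm{G}_2$ so $(ii)$--$(iii)$ both fail), $\Psi(\pi,\mathrm{spin})$ is a multiplicity-free isobaric sum of self-dual cuspidal $\pi_i$; it contains the trivial representation $\mathbf{1}$ iff $\mathrm{spin}(\mathrm{c}(\pi_p))$ has the eigenvalue $1$ for almost all $p$ (Chebotarev / strong multiplicity one for isobaric representations of $\mathrm{GL}_8$), and in that case, comparing dimensions and standard parameters forces $\Psi(\pi,\mathrm{spin}) = \mathbf{1} \boxplus \Psi(\pi,\mathrm{std})$ — here one uses that $\mathrm{spin}$ restricted to $\eta(\mathrm{G}_2)$ is $\mathbf{1} \oplus \mathrm{std}_7$, and $\mathrm{std}_7$ restricted to $\mathrm{G}_2$ is the $7$-dimensional representation whose lift to $\mathrm{GL}_7$ is exactly $\Psi(\pi,\mathrm{std})$.

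For the Galois implications: $(iii)\Rightarrow(v)$, since if $\Psi(\pi,\mathrm{spin}) = \mathbf{1} \boxplus \Psi(\pi,\mathrm{std})$ then by~\eqref{eq:proptaibi} the semisimple representation $\mathrm{spin}\circ \mathrm{r}_{\pi,\mathrm{spin},\iota}$ has Frobenius eigenvalues containing $1$ at almost all $p$, hence by Chebotarev the $8$-dimensional representation $\mathrm{spin}\circ \mathrm{r}_{\pi,\mathrm{spin},\iota}$ contains the trivial subrepresentation; it therefore fixes a nonzero vector $w$ in the spin space, and — because the orthogonal form is Galois-invariant and $w$ spans a trivial subrepresentation whose orthogonal complement is the $7$-dimensional $\mathrm{r}_{\pi,\mathrm{std},\iota}$ which does not contain $\mathbf{1}$ (as $\Psi(\pi,\mathrm{std})$ has no trivial constituent in the cuspidal case, and the tempered-endoscopic case does not arise) — the vector $w$ is non-isotropic, so $\mathrm{r}_{\pi,\mathrm{spin},\iota}(\Gamma)$ lies in the stabilizer of a non-isotropic vector, i.e. a conjugate of $\eta(\mathrm{G}_2(\overline{\Q}_\ell))$, giving $(v)$, hence $(iv)$. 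The implication $(iv)\Rightarrow(i)$ is then immediate from~\eqref{eq:proptaibi} and Chebotarev: if $\mathrm{r}_{\pi,\mathrm{spin},\iota}$ has image in $\eta(\mathrm{G}_2)$ then $\mathrm{r}_{\pi,\mathrm{spin},\iota}(\mathrm{Frob}_p) = \iota(\mathrm{c}(\pi_p))$ meets $\eta(\mathrm{G}_2(\overline{\Q}_\ell))$ for almost all $p$, which upon transport via $\iota$ back to $\C$ says $\pi$ is of type $\mathrm{G}_2$. Finally $(v)\Rightarrow(iv)$ is trivial.

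\textbf{The main obstacle} I anticipate is the isotropy issue in $(ii)\Rightarrow(iii)$ and in $(iii)\Rightarrow(v)$: knowing that $\mathrm{spin}(\mathrm{c}(\pi_p))$ fixes some nonzero vector at almost all $p$ is a priori weaker than having a single global vector fixed by the whole image, and one must ensure the fixed vector is non-isotropic so that the stabilizer is $\mathrm{G}_2$ rather than a larger parabolic-type subgroup. This is exactly where passing to the Galois representation $\mathrm{r}_{\pi,\mathrm{spin},\iota}$ pays off: the trivial subrepresentation it carries is a genuine sub, its orthogonal complement is forced to be the $7$-dimensional standard Galois representation which has no trivial constituent (using cuspidality of $\Psi(\pi,\mathrm{std})$, or ruling out the endoscopic shape~\eqref{eq:tempendosiegelsp6} on weight grounds since there $\Psi(\pi,\mathrm{std})$ likewise has no trivial piece), and hence the trivial line is automatically non-degenerate. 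One must also be a little careful that Ta\"ibi's $\mathrm{r}_{\pi,\mathrm{spin},\iota}$ has Zariski-dense enough image for "meets $\eta(\mathrm{G}_2)$ at almost all Frobenii" to upgrade to "image contained in $\eta(\mathrm{G}_2)$" — but this follows formally from Chebotarev density applied to the closed subvariety of $\mathrm{Spin}_7$ where the spin representation has eigenvalue $1$, combined with the stabilizer description, exactly as in the argument of \cite{GS2}.
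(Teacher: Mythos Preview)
Your overall strategy parallels the paper's, but you are missing one short group-theoretic fact that both streamlines the argument and closes a genuine gap in your $(ii)\Rightarrow(iii)$. The paper records as fact~(a): \emph{a semisimple element $g\in\Spin_7(F)$ has eigenvalue $1$ in the spin representation if and only if $g$ is conjugate into $\eta({\rm G}_2(F))$}. The reason is that for a semisimple $g$ acting orthogonally, the $1$-eigenspace is automatically non-degenerate, hence contains a non-isotropic vector whose stabilizer is a ${\rm G}_2$. With (a), $(i)\Leftrightarrow(ii)$ is immediate for Satake parameters and your isotropy worry there evaporates.

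Your step $(ii)\Rightarrow(iii)$ has a real gap: the claim ``$\Psi(\pi,{\rm spin})$ contains $\mathbf{1}$ iff ${\rm spin}({\rm c}(\pi_p))$ has eigenvalue $1$ for almost all $p$'' is false in general --- ${\rm Sym}^2$ of a level-one cuspidal representation of $\PGL_2$ is cuspidal on $\GL_3$ yet has eigenvalue $1$ at every unramified prime. Jacquet--Shalika compares \emph{full} Satake conjugacy classes, not single eigenvalues. The paper instead uses (a) to go $(ii)\Rightarrow(i)$, then fact~(b) (${\rm spin}\circ\eta \simeq \mathbf{1}\oplus{\rm std}\circ\eta$ as ${\rm G}_2$-representations) to obtain the full $8$-dimensional equality ${\rm spin}({\rm c}(\pi_p))=\mathbf{1}\oplus{\rm std}({\rm c}(\pi_p))$ at almost all $p$, and only then applies Jacquet--Shalika. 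Your $(iii)\Rightarrow(v)$ argument --- the fixed line is non-isotropic because its orthogonal complement, isomorphic to ${\rm std}\circ r$, has no trivial constituent --- is correct (using that $r_{\pi,{\rm spin},\iota}$ is semisimple so ${\rm spin}\circ r$ really splits off $\mathbf{1}$ as a sub) and is essentially an explicit unpacking of what the paper outsources to \cite{cheg2}; the paper instead proves $(i)\Rightarrow(v)$ directly via Cebotarev and facts (a),(b) over $\overline{\Q}_\ell$.
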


\begin{proof}
Recall the following two facts : 
\vskip 5pt
\begin{itemize}
\item[(a)] a semisimple element $g$ in ${\rm Spin}_7(F)$ has the eigenvalue $1$ in the spin representation if and only if it is conjugate to an element of $\eta({\rm G}_2(F))$; 
\item[(b)]  if ${\rm std}$ and ${\rm spin}$ are the standard and spin representations of ${\rm Spin}_7$, then the representation ${\rm spin} \circ \eta$ of ${\rm G}_2$ is isomorphic to $1 \oplus {\rm std} \circ \eta$. 
\end{itemize}
Applied to $F=\C$, fact (a) shows (i) $\Leftrightarrow$ (ii). Moreover, by fact (b), assertion (i) implies the equality of semisimple $\GL_8(\C)$-conjugacy classes ${\rm spin}({\rm c}(\pi_p)) = 1 \oplus {\rm std}({\rm c}(\pi_p))$ for all but finitely many $p$. But this implies $\Psi(\pi,{\rm spin}) ={\bf 1}\boxplus \Psi(\pi,{\rm std})$ by the Jacquet-Shalika theorem, so we have proved (i) $\Rightarrow$ (iii).  Observe that (v) $\Rightarrow$ (iv) is trivial, and that (iv) $\Rightarrow$ (i) follows from fact (b) for $F=\overline{\Q}_\ell$, so it only remains to show (iii) $\Rightarrow$ (v).
 
Assume (iii) holds. Fix $\ell$ and $\iota$, and set $r={\rm r}_{\pi,{\rm std},\iota}$ and $r'={\rm r}_{\pi,{\rm spin},\iota}$. 
Let $S$ be a Spinor module for ${\rm Spin}_7(\overline{\Q}_\ell)$; recall that $S$ is equipped with a nondegenerate invariant symmetric bilinear form.
By (iii) and the Chebotarev theorem, we have an isomorphism of semisimple representations 
${\rm spin}\circ r' \simeq 1\oplus {\rm std} \circ r$.  In particular, the subspace $U \subset S$ of fixed points under $r'({\rm Gal}(\overline{\Q}/\Q))$ is nonzero; it is also nondegenerate as $r'$ is semisimple.
It follows that $U$ contains a nonzero nonisotropic vector, hence that $r'$ is of type ${\rm G}_2$. 
See~\cite[Thm. 3.4]{cheg2} for similar ideas.
\end{proof}

\begin{remark} Let $\pi$ be a cuspidal representation of ${\rm PGSp}_6$ over $\Q$ of Siegel type.
\label{rem:g2weaklift}
\begin{itemize} 
\item[(i)] There is an equivalence:
\[ \text{$\pi$ is of type ${\rm G}_2$  $\iff$ $\pi$ is a functorial lift from (a form of) ${\rm G}_2$.} \]
 This follows from \cite{Vo} for generic $\Psi(\pi,{\rm std})$ and by \cite[Thm 10.1 and Thm 10.2]{GS1} in general.
\vskip 5pt

\item[(ii)] It is not difficult to prove that, for any $\ell$ and $\iota$, ${\rm r}_{\pi,{\rm spin},\iota}$ is of type ${\rm G}_2$ if, and only if, the image of ${\rm r}_{\pi,{\rm std},\iota}$ in ${\rm SO}_7(\overline{\Q}_\ell)$ is contained in a subgroup isomorphic to ${\rm G}_2(\overline{\Q}_\ell)$.

\vskip 5pt
\item[(iii)] If $\pi$ is of type ${\rm G}_2$ and of weights $k_1\geq k_2\geq k_3$, then we have $w_4=0$ by Proposition~\ref{prop:critG2} (iii) (at the Archimedean place) and \eqref{eq:weights}, and thus $k_1=k_2+k_3$.
\end{itemize}
\end{remark}
\vskip 5pt

The following proposition determines the possible shape of $\Psi(\pi, {\rm spin})$.
\vskip 5pt

\begin{prop} 
\label{prop:spincusp}
Assume $\Psi(\pi,{\rm std})$ is cuspidal, then $\Psi(\pi,{\rm spin})$ is cuspidal if, and only if, 
$\pi$ is not of type ${\rm G}_2$.
\end{prop}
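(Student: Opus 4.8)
The plan is to prove the two implications separately; the substance is the ``only if'' direction, which I would argue contrapositively. If $\pi$ is a functorial lift from a form of ${\rm G}_2$ then it is of type ${\rm G}_2$ by Remark~\ref{rem:g2weaklift}(i), so $\Psi(\pi,{\rm spin}) = {\bf 1}\boxplus\Psi(\pi,{\rm std})$ by Proposition~\ref{prop:critG2}(iii), which is not cuspidal. Conversely, suppose $\Psi(\pi,{\rm spin})$ is not cuspidal; the goal is to show $\pi$ is of type ${\rm G}_2$, hence a lift from ${\rm G}_2$ by Remark~\ref{rem:g2weaklift}(i). Fix $\ell$ and $\iota$ and work with the Galois representations ${\rm r}_{\pi,{\rm std},\iota}$ and ${\rm r}_{\pi,{\rm spin},\iota}$ recalled before the proof of Proposition~\ref{prop:tempendosiegelsp6spin}, with $\rho\circ{\rm r}_{\pi,{\rm spin},\iota} = {\rm r}_{\pi,{\rm std},\iota}$. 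First I would record the general principle that, for an isobaric automorphic representation $\Pi$ of $\GL_N$ over $\Q$ and a semisimple $\ell$-adic representation $r$ with matching Frobenius traces at almost all primes, the number of distinct cuspidal constituents of $\Pi$ equals $\dim_{\overline{\Q}_\ell}\mathrm{End}_{\Gal(\overline{\Q}/\Q)}(r)$, both being the order of the pole at $s=1$ of $L^S(s,\Pi\times\Pi^\vee)$, by Rankin--Selberg theory and Chebotarev's density theorem. Applied to $\Psi(\pi,{\rm std})$, this gives that ${\rm r}_{\pi,{\rm std},\iota}$ is irreducible; applied to $\Psi(\pi,{\rm spin})$ --- which by Theorem~\ref{T:strong}(iii) (valid here without the genericity hypothesis, as used throughout \S\ref{S:Siegel}) is a multiplicity-free isobaric sum $\boxplus_{i=1}^{k}\sigma_i$ of self-dual orthogonal cuspidal representations $\sigma_i$ of $\GL_{n_i}$ with $\sum_i n_i = 8$ and $k\geq 2$ --- it gives that ${\rm spin}\circ{\rm r}_{\pi,{\rm spin},\iota}$ is reducible.

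Next I would analyse $\bar G\subseteq\SO_7(\overline{\Q}_\ell)$, the Zariski closure of the image of ${\rm r}_{\pi,{\rm std},\iota}$, and its identity component $\bar G^0$. Since ${\rm r}_{\pi,{\rm std},\iota}$ is irreducible, $\bar G$ acts irreducibly on $\overline{\Q}_\ell^7$; as $7$ is prime, $\bar G^0$ either acts irreducibly too or acts through characters, hence is a torus (possibly trivial). The torus case is impossible: a trivial $\bar G^0$ would make ${\rm r}_{\pi,{\rm std},\iota}$ have finite image, forcing all its Hodge--Tate weights to be $0$, contrary to their being $0,\pm(k_1-1),\pm(k_2-2),\pm(k_3-3)$ with $k_3\geq 4$; and a positive-dimensional torus $\bar G^0$ would make ${\rm r}_{\pi,{\rm std},\iota}\simeq{\rm Ind}_{\Gal(\overline{\Q}/K)}^{\Gal(\overline{\Q}/\Q)}\chi$ for a degree-$7$ number field $K$, which would then be unramified at every finite prime --- away from $\ell$ since ${\rm r}_{\pi,{\rm std},\iota}$ is unramified there, and at $\ell$ since ${\rm r}_{\pi,{\rm std},\iota}$ is crystalline at $\ell$ and hence has unramified Weil--Deligne representation there --- whence $K=\Q$ by the Minkowski theorem, contradicting $[K:\Q]=7$. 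So $\bar G^0$ acts irreducibly, and by the classification of connected subgroups of $\SO_7$ acting irreducibly on the standard representation, $\bar G^0$ is $\SO_7$, ${\rm G}_2$, or the image of $\PGL_2$ under ${\rm Sym}^6$. The first is excluded: it would make the Zariski closure of the image of ${\rm r}_{\pi,{\rm spin},\iota}$ equal to $\Spin_7$, so ${\rm spin}\circ{\rm r}_{\pi,{\rm spin},\iota}$ irreducible, contradicting the reducibility just established. Since the ${\rm Sym}^6$-copy of $\PGL_2$ sits inside ${\rm G}_2$, and both ${\rm G}_2$ and that $\PGL_2$ are self-normalising in $\SO_7$, in the remaining cases $\bar G=\bar G^0\subseteq{\rm G}_2$.

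It then follows that the image of ${\rm r}_{\pi,{\rm spin},\iota}$ lies in ${\rm std}^{-1}({\rm G}_2)=\eta({\rm G}_2)\times\{\pm 1\}$ --- a direct product because ${\rm G}_2$ is simply connected with trivial centre --- so ${\rm r}_{\pi,{\rm spin},\iota}=\varepsilon\cdot(\eta\circ\phi)$ for some $\phi:\Gal(\overline{\Q}/\Q)\to{\rm G}_2(\overline{\Q}_\ell)$ and a quadratic character $\varepsilon$. As $\varepsilon$ occurs as a subquotient of ${\rm spin}\circ{\rm r}_{\pi,{\rm spin},\iota}$, it is unramified outside $\ell$ and crystalline at $\ell$; being of finite order it is then unramified everywhere, hence trivial by Minkowski. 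Since ${\rm spin}\circ\eta\simeq{\bf 1}\oplus({\rm std}\circ\eta)$ (fact (b) in the proof of Proposition~\ref{prop:critG2}), the representation ${\rm spin}\circ{\rm r}_{\pi,{\rm spin},\iota}$ contains the trivial representation, i.e. ${\rm spin}({\rm c}(\pi_p))$ has the eigenvalue $1$ for all $p\neq\ell$; by Proposition~\ref{prop:critG2} this means $\pi$ is of type ${\rm G}_2$, hence a functorial lift from a form of ${\rm G}_2$ by Remark~\ref{rem:g2weaklift}(i), completing the proof.

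I expect the main obstacle to be the local analysis at $\ell$ in the second step: making rigorous that a crystalline representation which is induced from a degree-$7$ extension forces that extension to be unramified at $\ell$ (via the unramifiedness of the Weil--Deligne representation attached to a crystalline representation), together with the analogous fact used in the last step that a finite-order crystalline character is unramified --- these are precisely the inputs that let the Minkowski theorem be applied. The group-theoretic ingredients (the list of irreducible connected subgroups of $\SO_7$, their normalisers in $\SO_7$, and the structure of ${\rm std}^{-1}({\rm G}_2)$) are classical but should be set up with care.
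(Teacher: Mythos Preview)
Your overall strategy matches the paper's, but there is a genuine gap in the step where you deduce irreducibility of ${\rm r}_{\pi,{\rm std},\iota}$ from cuspidality of $\Psi(\pi,{\rm std})$. The ``general principle'' you invoke --- that for an isobaric $\Pi$ with matching semisimple $\ell$-adic representation $r$, the number of cuspidal constituents of $\Pi$ equals $\dim_{\overline{\Q}_\ell}\mathrm{End}(r)$ --- is not valid as stated. Rankin--Selberg theory gives the pole order of $L^S(s,\Pi\times\Pi^\vee)$ at $s=1$ on the automorphic side, and Chebotarev matches the two Euler products, but nothing here computes that pole order as $\dim\mathrm{End}(r)$: writing $r\otimes r^\vee\simeq{\bf 1}^{\oplus d}\oplus V$ with $V^{\Gal}=0$, you would need $L^S(s,V)$ to have neither pole nor zero at $s=1$, which is a Tate-type statement not known in this generality. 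In particular, irreducibility of the Galois representation attached to a regular algebraic selfdual cuspidal representation of $\GL_7$ is not known for every $\ell$; this is precisely why the paper invokes Theorem~D of \cite{PT} to \emph{choose} an $\ell$ for which irreducibility holds.

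Once you import \cite{PT} for that step, the remainder of your argument is correct but more circuitous than the paper's. The paper obtains connectedness of the Zariski closure $\Gamma$ (and likewise of $\Gamma'\subset\Spin_7$) in one line: every tensor power of $r$ is crystalline at $\ell$ and unramified elsewhere, so any finite quotient of the image cuts out an everywhere unramified extension of $\Q$, hence is trivial by Minkowski. This bypasses the very issue you flagged as your main obstacle (the local analysis at $\ell$ in the induced-from-degree-$7$ and sign-character steps). With $\Gamma$ connected and irreducible on the standard representation, both arguments land on the same trichotomy $\Gamma\in\{\SO_7,\ \eta({\rm G}_2),\ \text{principal }\PGL_2\}$ and finish identically; your detour through $\rho^{-1}({\rm G}_2)=\eta({\rm G}_2)\times\{\pm 1\}$ and the Minkowski argument to kill the sign character are correct, but become redundant once $\Gamma'$ is already known to be connected.
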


\begin{proof} Assume $\Psi(\pi,{\rm std})$ is cuspidal and set $r={\rm r}_{\pi,{\rm std},\iota}$.
By Theorem D in~\cite{PT}, we may choose $\ell$ and $\iota$ such that ${\rm std} \circ r$ is irreducible.
Set $F=\overline{\Q}_\ell$ and let $\Gamma \subset {\rm SO}_7(F)$ be the Zariski-closure of the image of $r$.
As $r$ is crystalline at $\ell$ and unramified outside $\ell$, so are all tensor powers $r^{\otimes n}$ with $n\geq 1$, and the Minkowski theorem implies that $\Gamma$ is connected.\par

As $\Gamma$ acts irreducibly in ${\rm std}$, it is well-known that we have either 
$\Gamma={\rm SO}_7(F)$, or $\Gamma$ is a principal ${\rm PGL}_2(F)$ in ${\rm SO}_7(F)$, or $\Gamma$
is a conjugate of $\eta({\rm G}_2(F)$). Set now $r'={\rm r}_{\pi,{\rm spin},\iota}$ and $\Gamma'={\rm Im}\, r'$.
Then $\Gamma'$ is connected as well, for the same reason as above, so we must have $\Gamma'={\rm Spin}_7(F)$, or $\Gamma'$ is a principal ${\rm PGL}_2(F)$ in ${\rm Spin}_7(F)$, or $\Gamma$ is a conjugate of $\eta({\rm G}_2(F)$), respectively. In the first situation, ${\rm spin} \circ r'$ is thus irreducible. 
But if we write $$\Psi(\pi,{\rm spin}) = \pi_1 \boxplus \cdots \boxplus \pi_k,$$ the representation ${\rm spin} \circ r'$ is also the direct sum of that associated to the $\pi_i$'s (which are algebraic, selfdual and essentially regular by \eqref{eq:weights}). This forces $k=1$, and we are done. 
In the remaining two cases, $r'$ is of type ${\rm G}_2$, since a principal ${\rm PGL}_2$ in ${\rm Spin}_7$ (or ${\rm SO}_7$) may be conjugate inside ${\rm G}_2$, and we conclude by Prop.~\ref{prop:critG2}.
\end{proof}

See Section~\ref{sect:complementandexamples} for some examples and additional results about the $\pi$ such that $\Psi(\pi,{\rm std})$ is cuspidal.

\subsection{\bf Non-generic case}
Finally,  we consider the case where the A-parameter $\Psi(\pi,{\rm std})$ is not generic, which is not a priori covered by Thm.~\ref{thm:weakspinlift}.
By \cite[\S 9.3]{CR}, we have\footnote{For our purposes here, we could replace $S_2$ by $|.|^{1/2} \boxplus |.|^{-1/2}$, but Arthur's notation is more suggestive.} 
\begin{equation}
\label{eq:nontempsiegel}
\Psi(\pi,{\rm std}) \,\,= \,\,\pi_1 \boxtimes S_2 \,\boxplus\, {\rm Sym}^2 \pi_3,
\end{equation}
where $\pi_1$ and $\pi_3$ are cuspidal automorphic representations of ${\rm PGL}_2$ 
generated by holomorphic cuspidal eigenforms for ${\rm SL}_2(\Z)$ 
(again, the precise constraints on the weights of $\pi_1$ and $\pi_3$ are given {\it loc.\ cit.}). 
In this situation we simply {\it define} 
\begin{equation}
\label{eq:psispinontempered}
\Psi(\pi,{\rm spin}) \,\,:= \,\,(\pi_1 \boxplus S_2) \,\boxtimes \,\pi_3 \,\,= \,\,(\pi_1 \boxtimes \pi_3)\, \boxplus\, \pi_3 \boxtimes S_2.
\end{equation}
In Theorem \ref{thm:fullcasesiegeltype} below, we verify that $\Psi(\pi, {\rm spin})$ is indeed the A-parameter of a Spin lifting of $\pi$. 
Together with Theorem~\ref{thm:weakspinlift},  this establishes the existence of the Spin lifting  $\Psi(\pi,{\rm spin})$ {\it for all $\pi$ of Siegel type}.
 
 \begin{thm} 
 \label{thm:fullcasesiegeltype}
 For all cuspidal $\pi$ of Siegel type, $\Psi(\pi,{\rm spin})$ is the A-parameter of a Spin lifting of $\pi$. 
 Moreover, this lifting is strong at all unramified places, as well as at the Archimedean place in the sense of infinitesimal characters. 
 \end{thm}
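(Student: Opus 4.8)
The plan is to reduce to the cases already treated, leaving one genuinely new case. Recall that for $\pi$ of Siegel type, by \cite[\S 9.3]{CR} the standard A-parameter $\Psi(\pi,{\rm std})$ is of exactly one of three shapes: cuspidal tempered, the endoscopic tempered parameter~\eqref{eq:tempendosiegelsp6}, or the non-generic parameter~\eqref{eq:nontempsiegel}. In the first two cases $\Psi(\pi,{\rm std})$ is generic, so Theorem~\ref{thm:weakspinlift} produces a weak Spin lifting $\sigma$ of $\pi$ on $\GL_8$; Theorem~\ref{T:strong}(i) makes it strong at every unramified place, which for $\pi$ of Siegel type means \emph{all} finite places; Theorem~\ref{T:strong}(ii) gives the infinitesimal-character statement at the Archimedean place; and Propositions~\ref{prop:tempendosiegelsp6spin}, \ref{prop:critG2} and~\ref{prop:spincusp} identify $\Psi(\pi,{\rm spin})$ with the isobaric sum written down in \S\S\ref{SS:cuspidal}. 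So nothing remains in these cases, and the work is to handle the non-generic parameter~\eqref{eq:nontempsiegel}, with $\pi_1,\pi_3$ cuspidal for $\PGL_2$ coming from level one holomorphic eigenforms.

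The first point is that $\Psi(\pi,{\rm spin})$ as defined in~\eqref{eq:psispinontempered} is an honest isobaric automorphic representation of $\GL_8$: the summand $\pi_1\boxtimes\pi_3$ is Ramakrishnan's Rankin--Selberg lift \cite{Ra} from $\GL_2\times\GL_2$ to $\GL_4$, and the summand $\pi_3\boxtimes S_2$ is the residual (Speh) automorphic representation of $\GL_4$ attached to the $2$-dimensional Arthur parameter $\pi_3\boxtimes S_2$ of $\PGL_2$; both are automorphic on $\GL_4$, so their isobaric sum is automorphic on $\GL_8$. It then remains to check that its Hecke--Satake family is ${\rm spin}(c(\pi_p))$ at almost all $p$, to upgrade this to all finite $p$, and to check the Archimedean infinitesimal character.

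For this I would argue exactly as in the proof of Proposition~\ref{prop:tempendosiegelsp6spin}, simply replacing $\pi_2$ there by $S_2$ and its Galois avatar ${\rm r}_2$ by ${\rm r}_{S_2}:=\mathbf{1}\oplus\omega_\ell$ (this is the $\ell$-adic incarnation of $S_2$ in the normalization matching $\pi'_i=\pi_i|.|^{1/2}$, for which $\det {\rm r}_{S_2}=\omega_\ell=\det {\rm r}_1=\det {\rm r}_3$ as in that proof). Fixing $\ell$ and $\iota$, and composing $({\rm r}_1,{\rm r}_{S_2})$ (viewed in ${\rm GSpin}_4$) and ${\rm r}_3^\vee$ (viewed in ${\rm GSpin}_3$) with the extension $\nu:{\rm GSpin}_4\times{\rm GSpin}_3\to{\rm GSpin}_7$ of~\eqref{eq:morphnu}, one gets a semisimple $\sigma:{\rm Gal}(\overline{\Q}/\Q)\to{\rm GSpin}_7(\overline{\Q}_\ell)$ with ${\rm std}\circ\sigma\simeq ({\rm r}_1\otimes {\rm r}_{S_2}^\vee)\oplus({\rm Sym}^2{\rm r}_3\otimes\omega_\ell^{-1})\simeq{\rm std}\circ {\rm r}_{\pi,{\rm std},\iota}$ and ${\rm spin}\circ\sigma\simeq ({\rm r}_1\oplus {\rm r}_{S_2})\otimes {\rm r}_3^\vee$. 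Since ${\rm sim}\circ\sigma=\det {\rm r}_1\cdot\det {\rm r}_3^\vee=\omega_\ell\cdot\omega_\ell^{-1}=1$, the morphism $\sigma$ is ${\rm Spin}_7$-valued; conjugating, $\rho\circ\sigma={\rm r}_{\pi,{\rm std},\iota}$, so $\sigma=\chi\cdot {\rm r}_{\pi,{\rm spin},\iota}$ for a quadratic character $\chi$. As ${\rm r}_{S_2}$ is built from powers of $\omega_\ell$, both $\sigma$ and ${\rm r}_{\pi,{\rm spin},\iota}$ are unramified outside $\ell$ and crystalline at $\ell$, hence so is $\chi$ (a subrepresentation of $({\rm spin}\circ\sigma)\otimes({\rm spin}\circ {\rm r}_{\pi,{\rm spin},\iota})^\vee$); being of finite order it is unramified everywhere, so by the Minkowski theorem $\chi=1$ and $\sigma={\rm r}_{\pi,{\rm spin},\iota}$.

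To conclude: by~\eqref{eq:proptaibi}, ${\rm spin}\circ {\rm r}_{\pi,{\rm spin},\iota}$ has ${\rm Frob}_p$-eigenvalues $\iota({\rm spin}(c(\pi_p)))$ for $p\neq\ell$, while ${\rm spin}\circ\sigma=({\rm r}_1\oplus {\rm r}_{S_2})\otimes {\rm r}_3^\vee$ is the $\ell$-adic realization of $(\pi_1\boxplus S_2)\boxtimes\pi_3=\Psi(\pi,{\rm spin})$, whose ${\rm Frob}_p$-eigenvalues are $\iota(c(\Psi(\pi,{\rm spin})_p))$ by compatibility of the Ramakrishnan lift and of the Speh construction with Satake parameters; comparing gives $c(\Psi(\pi,{\rm spin})_p)={\rm spin}(c(\pi_p))$ for all $p\neq\ell$, and letting $\ell$ vary, for \emph{all} finite $p$, which is the desired strong Spin lifting at every unramified place. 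At the Archimedean place, $\sigma={\rm r}_{\pi,{\rm spin},\iota}$ forces equality of Hodge--Tate weights; those of ${\rm r}_{\pi,{\rm spin},\iota}$ encode ${\rm d}{\rm spin}(c(\pi_\infty))$ via~\eqref{eq:weights}, while those of ${\rm spin}\circ\sigma$ (the realization of $\Psi(\pi,{\rm spin})$) encode $c(\Psi(\pi,{\rm spin})_\infty)$, so these coincide; alternatively one matches the two infinitesimal characters by a direct computation from~\eqref{eq:weights} and the $S_2$-shift. The main obstacle, exactly as in Proposition~\ref{prop:tempendosiegelsp6spin}, is eliminating the quadratic sign $\chi$: this needs the normalizations to be arranged so that the Arthur-$\SL_2$ block ${\rm r}_{S_2}$ is crystalline and unramified outside $\ell$, which is what makes the Minkowski argument applicable.
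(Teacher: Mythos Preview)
Your proof is correct and follows essentially the same approach as the paper's: reduce to the non-generic case~\eqref{eq:nontempsiegel}, then rerun the Galois-representation argument of Proposition~\ref{prop:tempendosiegelsp6spin} verbatim with $\pi_2$ replaced by the trivial representation of $\PGL_2$ (i.e.\ $S_2$) and ${\rm r}_2:=1\oplus\omega_\ell$, concluding by varying $\ell$. The paper handles the Archimedean infinitesimal character by a direct inspection of the shape of $\Psi(\pi,{\rm spin})$ rather than via Hodge--Tate weights, which is the simpler route you also mention as an alternative.
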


\begin{proof} We may assume 
$\Psi(\pi,{\rm std})=\pi_1 \boxtimes S_2 \,\boxplus\, {\rm Sym}^2 \pi_3$ as above. 
We apply exactly the same argument as in the proof of Prop.~\ref{prop:tempendosiegelsp6spin},
with $\pi_2$ there replaced with the trivial representation of ${\rm PGL}_2$, and setting ${\rm r}_2:=1 \oplus \omega_\ell$. 
Defining $\sigma : {\rm Gal}(\overline{\Q}/\Q) \rightarrow {\rm Spin}_7(\overline{\Q}_\ell)$ as in that proof, 
the argument shows verbatim that $\sigma$ is conjugate to ${\rm r}_{\pi,{\rm spin},\iota}$, hence that $\Psi(\pi,{\rm spin})$ 
is a spin lifting of $\pi$, which is strong at all finite places except maybe $\ell$ (the assertion about infinitesimal characters is obvious from the shape of $\Psi(\pi,{\rm spin})$). Using another $\ell$ gives the full result. 
\end{proof} 
\vskip 5pt
\subsection{\bf Application to Spin L-functions}
We end with an interesting corollary about spinor ${\rm L}$-functions.
Assume $\pi$ is of Siegel type and generated by a Siegel cuspidal eigenform of weights $k_1 \geq k_2 \geq k_3 \geq 4$.
Following Langlands, recall that for any prime $p$ the local spin $L$-factor is defined as 
$${\rm L}(s,\pi_p,{\rm spin}) = \frac{1}{\det ( 1-{\rm spin}({\rm c}(\pi_p) )p^{-s})}.$$
Set $\Gamma_\C(s)= 2 (2\pi)^{-s} \Gamma(s)$, with $\Gamma(s)$ the Euler $\Gamma$-function, define the $w_i$ as in \eqref{eq:weights}, 
and set $${\rm L}(s,\pi_\infty,{\rm spin}) \,\,:=\,\, \prod_{i=1}^4 \Gamma_\C(s+w_i).$$ 
We then define ${\rm L}(s,\pi,{\rm spin})$ as the product, over all places $v$ of $\Q$, of the local L-factors ${\rm L}(s,\pi_v,{\rm spin})$. 
We know since Langlands that this Euler product is absolutely convergent for ${\rm Re}(s)$ big enough.
Recall also the standard $L$-function ${\rm L}(s,\pi,{\rm std})$ of $\pi$, whose analytic properties are now well-known (for any genus).

\begin{thm} 
\label{thm:Lspinsiegel}
Assume $\pi$ is a cuspidal of Siegel type. Then:
\begin{itemize}
\item[(i)] ${\rm L}(s,\pi,{\rm spin})$ has a meromorphic continuation to all of $\C$, 
with at most a simple pole at $s=0$ and $1$, and no other poles. It satisfies ${\rm L}(s,\pi,{\rm spin})={\rm L}(1-s,\pi,{\rm spin})$. \ps
\item[(ii)] Moreover, ${\rm L}(s,\pi,{\rm spin})$  has a pole at $s=1$ if, and only if, $\pi$ is of type ${\rm G}_2$, in which case  ${\rm L}(s,\pi,{\rm spin})=\zeta(s) \cdot  {\rm L}(s,\pi,{\rm std})$. 
\end{itemize}
\end{thm}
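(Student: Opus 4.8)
The plan is to read off everything from the structure of the isobaric automorphic representation $\Psi(\pi,{\rm spin})$ of ${\rm GL}_8$ over $\Q$ produced in Theorems~\ref{thm:weakspinlift} and~\ref{thm:fullcasesiegeltype}. Because that Spin lifting is strong at every finite place and at the Archimedean place in the sense of infinitesimal characters (Theorems~\ref{T:strong} and~\ref{thm:fullcasesiegeltype}), the completed $L$-function ${\rm L}(s,\pi,{\rm spin})$ of the statement should coincide with the completed standard $L$-function of $\Psi(\pi,{\rm spin})$, \emph{provided} the Archimedean factor $\prod_{i=1}^4\Gamma_\C(s+w_i)$ is the one attached to $\Psi(\pi,{\rm spin})_\infty$; I would check this from the explicit shape of that Archimedean parameter, whose infinitesimal character is ${\rm d}\,{\rm spin}({\rm c}(\pi_\infty))$ with eigenvalues $\pm w_1,\pm w_2,\pm w_3,\pm w_4$ as in~\eqref{eq:weights}. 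Then I would invoke Propositions~\ref{prop:tempendosiegelsp6spin}, \ref{prop:spincusp}, \ref{prop:critG2} and formula~\eqref{eq:psispinontempered} to record that, in all cases, $\Psi(\pi,{\rm spin})$ is an isobaric sum whose summands are each either a self-dual cuspidal representation of some ${\rm GL}_{n_i}$ (unramified at all finite places by Theorems~\ref{T:strong} and~\ref{thm:fullcasesiegeltype}), or of the form $\tau\boxtimes S_2$ (a Speh representation) with $\tau$ cuspidal on ${\rm GL}_2$ and generated by a level-one eigenform---the latter only in the non-generic case~\eqref{eq:nontempsiegel}; and that the trivial representation ${\bf 1}$ occurs among the summands at most once, exactly once precisely when $\pi$ is of type ${\rm G}_2$.

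Granting this, part~(i) is essentially formal. Each cuspidal summand on ${\rm GL}_{n_i}$ with $n_i\geq 2$ contributes an entire Godement--Jacquet $L$-function; each Speh summand $\tau\boxtimes S_2$ contributes $L(s+\tfrac12,\tau)\,L(s-\tfrac12,\tau)$, again entire since $\tau$ is cuspidal on ${\rm GL}_2$; and a ${\rm GL}_1$-summand, being an everywhere-unramified self-dual Hecke character of $\Q$, must be ${\bf 1}$ (there is no quadratic extension of $\Q$ unramified at all finite primes, by Minkowski), contributing the completed function $\zeta(s)$, which has simple poles only at $s=0$ and $s=1$. Since ${\bf 1}$ occurs at most once, this yields the meromorphic continuation and the pole statement of~(i). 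For the functional equation, self-duality of every summand makes $\Psi(\pi,{\rm spin})$ self-dual, so ${\rm L}(s,\pi,{\rm spin})=\varepsilon\cdot{\rm L}(1-s,\pi,{\rm spin})$ with $\varepsilon=\pm1$; as all the non-Archimedean local root numbers are trivial (everything is unramified there), $\varepsilon$ is the Archimedean root number, and the standard computation---in which each factor $\Gamma_\C(s+w)$ contributes $i^{\,2w+1}$, including the ones arising in Speh pairs---gives $\varepsilon=\prod_{i=1}^4 i^{\,2w_i+1}=(-1)^{w_1+w_2+w_3+w_4}$. Finally, using~\eqref{eq:weights} one finds $w_1+w_2+w_3+w_4$ to be either $2(k_1-1)$ or $(k_1-1)+(k_2-2)+(k_3-3)$, both even because $\sum_i k_i\equiv 0\bmod 2$; hence $\varepsilon=1$.

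For part~(iii): if ${\rm L}(s,\pi,{\rm spin})$ has a pole at $s=1$, then by the list of summands above some summand has an $L$-function with a pole at $s=1$, which forces a ${\rm GL}_1$-summand, hence ${\bf 1}\subset\Psi(\pi,{\rm spin})$, hence $\pi$ of type ${\rm G}_2$ by Proposition~\ref{prop:critG2} (in the endoscopic and non-generic cases this last implication is visible directly from~\eqref{eq:spinparendosiegeltempered} and~\eqref{eq:psispinontempered}, which exhibit a summand ${\bf 1}$ exactly when two of the relevant ${\rm GL}_2$-factors coincide). Conversely, if $\pi$ is of type ${\rm G}_2$, then $\Psi(\pi,{\rm spin})={\bf 1}\boxplus\Psi(\pi,{\rm std})$, so ${\rm L}(s,\pi,{\rm spin})=\zeta(s)\cdot{\rm L}(s,\pi,{\rm std})$; since $\Psi(\pi,{\rm std})$ then contains no copy of ${\bf 1}$, the factor ${\rm L}(s,\pi,{\rm std})$ is entire, and ${\rm L}(s,\pi,{\rm spin})$ has a simple pole at $s=1$, as claimed. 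I expect the only delicate step to be the bookkeeping of the Archimedean $L$- and $\varepsilon$-factors against the naive recipe $\prod_i\Gamma_\C(s+w_i)$---in particular the sign chase in the root-number computation, where integrality of the $w_i$ (equivalently $\sum_i k_i$ even) is used, and the treatment of the borderline value $w_4=0$; everything else is formal given the shape results established earlier in the section.
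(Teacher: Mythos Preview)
Your overall strategy matches the paper's: factor ${\rm L}(s,\pi,{\rm spin})$ through the isobaric decomposition of $\Psi(\pi,{\rm spin})$ and invoke Godement--Jacquet. There are, however, three points where your argument is looser than the paper's and needs tightening. First, knowing only the infinitesimal character ${\rm d}\,{\rm spin}({\rm c}(\pi_\infty))$ does not by itself pin down the Archimedean $L$-parameter of $\Psi(\pi,{\rm spin})_\infty$, hence does not immediately give $\prod_i \Gamma_\C(s+w_i)$ as its $L$-factor. The paper fills this gap by invoking Clozel's purity lemma (see \cite[Prop.~8.2.13]{CL}) in the generic case, and by the explicit shape~\eqref{eq:parnontemplform} in the non-generic case, to identify the parameter as $\oplus_i I_{w_i}$ (respectively $I_{u_1}\oplus I_{u_2}\oplus I_u|\cdot|^{1/2}\oplus I_u|\cdot|^{-1/2}$). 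Second, your root-number computation is correct in outcome but the slogan ``each $\Gamma_\C(s+w)$ contributes $i^{2w+1}$'' is not literally right for the Speh factors: the parameter there is $I_u|\cdot|^{\pm 1/2}$, whose $\epsilon$-factor is $i^{2u+1}$, not $i^{2(u\pm 1/2)+1}$; the two errors cancel in the product, but you should say so. The paper sidesteps this entirely by using instead that each cuspidal orthogonal summand has global $\epsilon$ equal to $+1$ (Arthur), and that the two $\pi_3$-summands in the non-generic case contribute $\epsilon(\pi_3,1/2)^2=1$.

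Third, in your converse direction of (iii), ``$L(s,\pi,{\rm std})$ is entire'' does not yet give a pole of $\zeta(s)\cdot L(s,\pi,{\rm std})$ at $s=1$: you must also know $L(1,\pi,{\rm std})\neq 0$. The paper invokes the Jacquet--Shalika non-vanishing $L(1,\pi_i)\neq 0$ for this, and you should too.
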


There has been an number of past works on the spinor $L$-functions 
of cuspidal automorphic representations $\pi$ of ${\rm PGSp}_6$ over number fields. 
For a globally generic $\pi$, a partial representation by a Rankin-Selberg type integral 
was found in \cite{BG} and studied in \cite{Vo}. 
For our Siegel type $\pi$'s, a weaker statement had also been proved by Pollack in \cite[Thm. 1.2]{pollack}, 
who assumed that the associated Siegel modular form has a nonzero Fourier coefficient 
at the maximal order of a definite quaternion algebra. Our method here is  quite different, 
and ultimately relies on the properties of the Godement-Jacquet $L$-functions; 
it also provides the most complete result for Siegel type $\pi$.

\begin{proof} 
In all cases, there are integers $k\geq 1$ and $n_i,d_i\geq 1$, 
with $\sum_{i=1}^k n_i d_i=8$, and
$$\Psi(\pi,{\rm spin}) = \boxplus_{i=1}^k  \pi_i \boxtimes S_{d_i},$$ 
for some selfdual level $1$ cuspidal automorphic representations $\pi_i$ of ${\rm GL}_{n_i}$. 
We have either $d_i=1$ and $\pi_i$ is orthogonal, or $d_i=2$ and $\pi_i$ is symplectic (with $n_i=2$).
Recall that the Godement-Jacquet standard $L$-function ${\rm L}(s,\pi_i)$ is entire if $\pi_i \neq {\bf 1}$,
and is equal to the completed $\zeta(s)$ otherwise. Moreover, it  satisfies the functional equation ${\rm L}(s,\pi_i)=\epsilon(\pi_i,1/2){\rm L}(1-s,\pi_i)$
for some sign $\epsilon(\pi_i,1/2)=\pm 1$, equal to $1$ if $\pi_i$ is orthogonal (Arthur).\par \smallskip
When $\Psi(\pi,{\rm spin})$ is generic, {\it i.e.} $d_i=1$ for all $i$, 
Formula~\eqref{eq:weights} and {\it Clozel's purity lemma} (see \cite[Prop. 8.2.13]{CL}) 
show that the Langlands parameter of $\boxplus_{i=1}^k (\pi_i)_\infty$ is 
$\oplus_{i=1}^4 {\rm I}_{w_i}$ with ${\rm I}_w = {\rm Ind}_{{\rm W}_\C}^{{\rm W}_\R} (z \mapsto (z/|z|)^{2w})$ 
for $w \in \frac{1}{2}\Z$. We have thus (equality at all places)
\begin{equation}
\label{eq:equalspingodementjacquet}
{\rm L}(s,\pi,{\rm spin}) = \prod_{i=1}^k {\rm L}(s,\pi_i).
\end{equation}
As ${\bf 1}$ cannot appear twice as a $\pi_i$ ({\it e.g.} at the Archimedean place), this proves (i). 
Part (ii) follows from ${\rm L}(1,\pi_i) \neq 0$ (Jacquet-Shalika) and Prop.~\ref{prop:critG2}.\par
\smallskip
Assume now $\Psi(\pi,{\rm spin})$ is nongeneric, {\it i.e.} satisfies \eqref{eq:psispinontempered}, or in Langlands form
\begin{equation}
\label{eq:parnontemplform}
\Psi(\pi,{\rm spin})\,\, =\,\, \pi_1 \,\boxtimes\, \pi_3\, \boxplus\, \pi_3|.|^{1/2} \,\boxplus\, \pi_3|.|^{-1/2}.
\end{equation}
Denote by $\pm u$, with $u \in \frac{1}{2}\Z_{>0} \smallsetminus \Z$, the $2$ eigenvalues of ${\rm c}((\pi_3)_\infty)$.
An inspection of the infinitesimal character shows that we may write $\{w_1,w_2,w_3,w_4\}=\{u_1,u_2,u+\frac{1}{2},u-\frac{1}{2}\}$. The Langlands parameter of $\Psi(\pi,{\rm spin})_\infty$ is thus 
${\rm I}_{u_1} \oplus {\rm I}_{u_2} \oplus {\rm I}_u |.|^{1/2} \oplus {\rm I}_u|.|^{-1/2}$, whose standard $L$-function coincides with our definition of ${\rm L}(s,\pi_\infty,{\rm spin})$, showing again 
$${\rm L}(s,\pi,{\rm spin}) = {\rm L}(s,\pi_1 \boxtimes \pi_3) {\rm L}(s-1/2,\pi_3){\rm L}(s+1/2,\pi_3).$$
Again, an unramified character $|.|^s$ appears at most once in the $L$-parameter of $\Psi(\pi,{\rm spin})_\infty$, and only for $s=0$.
The $\epsilon$-factor of ${\rm L}(s,\pi,{\rm spin})$ is $\epsilon(\pi_1 \otimes \pi_3,1/2) \cdot \epsilon(\pi_3,1/2)^2 = 1 \cdot (\pm 1)^2=1$. We conclude as above.
\end{proof}
 
\vskip 5pt

\subsection{\bf Additional remarks and some examples} 
\label{sect:complementandexamples}
In this section, we are interested in the set $\Pi_c$ of cuspidal automorphic representations $\pi$ of Siegel type of ${\rm PGSp}_6$ such that $\Psi(\pi,{\rm std})$ is cuspidal. For a prime $\ell$, we denote by ${\rm I}_\ell$ the set of field isomorphisms $\iota : \C \overset{\ssim}{\rightarrow} \overline{\Q}_\ell$, and for $\pi \in \Pi_c$, we define ${\rm I}_{\pi,\ell}$ as the set of $\iota \in {\rm I}_\ell$ such that ${\rm r}_{\pi,{\rm std},\iota}$ is irreducible. 
We know by \cite{PT} that ${\rm I}_{\pi,\ell} \neq \emptyset$ for infinitely many $\ell$ (and conjecturally, we should have ${\rm I}_{\pi,\ell} = {\rm I}_\ell$ for all $\ell$). 
\vskip 5pt
Recall that for any algebraically closed field $F$ (say of characteristic $0$), there are only $3$ conjugacy classes of connected reductive $F$-algebraic subgroups of ${\rm Spin}_7(F)$ with irreducible standard representation, namely that of ${\rm Spin}_7(F)$, of ${\rm G}_2(F)$, and of the principal ${\rm PGL}_2(F)$; we denote by $\mathcal{Z}_F$ this three-element set.   
 For $\pi \in \Pi_c$, and $\iota \in {\rm I}_\ell$, we denote by 
$${\rm Z}_{\pi,\iota} \subset {\rm Spin}_7(\overline{\Q}_\ell)$$ 
the Zariski-closure of the image of ${\rm r}_{\pi,{\rm spin},\iota}$; this is a reductive $\overline{\Q}_\ell$-algebraic subgroup of ${\rm Spin}_7(\overline{\Q}_\ell)$ well-defined up to conjugation. As explained in the proof of Prop.~\ref{prop:spincusp} we have ${\rm Z}_{\pi,\iota} \in \mathcal{Z}_{\overline{\Q}_\ell}$ for $\iota \in {\rm I}_{\pi,\ell}$ (and ${\rm Z}_{\pi,\iota}$ is connected for all $\ell$ and $\iota \in {\rm I}_\ell$).

\begin{prop} 
\label{prop:defZpi}
For all $\pi \in \Pi_c$, there is a unique ${\rm Z}_\pi \in \mathcal{Z}_{\C}$ satisfying ${\rm Z}_\pi \times_\iota \overline{\Q}_\ell \simeq {\rm Z}_{\pi,\iota}$ for all $\ell$ and $\iota \in {\rm I}_{\pi,\ell}$. 
\end{prop}

\begin{proof} Fix $\pi \in \Pi_c$ and let $E \subset \C$ be the number field generated by the Hecke eigenvalues of a Siegel eigenform for ${\rm PGSp}_6(\Z)$ generating $\pi$. The collection of semisimple ($8$-dimensional) Galois representations $r_\iota:={\rm spin}\circ \rho_{\pi,\iota}$, with $(\ell,\iota)$ varying, is {\it compatible} in the following sense: for any prime $p$, there is a polynomial $Q_p \in E[t]$ such that for all $\ell \neq p$ and all $\iota \in {\rm I}_\ell$, we have $\det (t - r_\iota({\rm Frob}_p) ) = \iota(Q_p(t))$. By a classical argument of Serre \cite[p .58]{serre} (see also \cite[Rem. 6.13]{LP}), in such systems the rank of the Zariski closure of ${\rm Im}\, r_\iota$ (here $3$, $2$, or $1$) does not depend on the choice of $(\ell,\iota)$. The proposition follows.
\end{proof}
\begin{remark}
The group ${\rm Z}_\pi(\C) \subset {\rm Spin}_7(\C)$ is important as its compact form $K$ is the so-called {\it Sato-Tate group} of $\pi$ {\rm (}here, either ${\rm Spin}(7)$, the compact ${\rm G}_2$, or ${\rm SO}(3)${\rm )}:  the Satake parameters ${\rm c}(\pi_p) \cap K$ are conjecturally equidistributed in the space of conjugacy classes of $K$.
\end{remark}

 We now discuss some examples of $\pi \in \Pi_c$ with all possible ${\rm Z}_\pi$. By \cite[Cor. 1.10]{CR} and \cite[Thm. B]{taibisiegel}, there is an explicit formula for the number ${\rm N}(a,b,c)$
of $\pi \in \Pi_c$ with weights $(k_1, k_2, k_3)=(a+1,b+2,c+3)$. For instance, for $a \leq 13$ there are exactly $8$ triples $(a,b,c)$ such that ${\rm N}(a,b,c)$ is nonzero, 
and actually equal to $1$, namely 
$$(12, 8, 4), \, \, (13, 8, 5), \, \, (13,10, 3), \, \, (13, 10, 5), \, \, (13, 10, 7), \, \, (13, 12, 5), \, \, (13, 12, 7), \, \, (13, 12, 9).$$
\vskip 5pt
(i) By Proposition~\ref{prop:spincusp}, we know that ${\rm Z}_\pi = {\rm Spin}_7$ if and only if $\pi$ is not of type ${\rm G}_2$, and 
by Remark~\ref{rem:g2weaklift} (iii), we also know that if $\pi$ is of type ${\rm G}_2$ then we have $a=b+c$. 
It follows that for all but the first $3$ triples $(a,b,c)$ above, the corresponding representation $\pi$ satisfies ${\rm Z}_\pi = {\rm Spin}_7$. More generally, by Ta\"ibi's formula loc.\ cit. there are infinitely many $\pi$ with ${\rm Z}_\pi = {\rm Spin}_7$, the asymptotics of their number for ${\rm Min} \{a-b,b-c,c\} \to +\infty$ being polynomial in $(a,b,c)$ and equivalent to ${\rm N}(a,b,c)$.
\vskip 5pt

(ii) The case ${\rm Z}_\pi = {\rm G}_2$ also appears. Indeed, by \cite[Thm. 6.12]{cheg2}, for the first $3$ triples above the corresponding $\pi$ is of type ${\rm G}_2$. Moreover, using Galois theoretic arguments we can show that if ${\rm Z}_\pi = {\rm PGL}_2$ then we have $(a,b,c)=(3c,2c,c)$ and $c$ is odd (see Proposition~\ref{prop:sym6} below for a stronger result). This shows that for the first $3$ triples above, the corresponding $\pi$ actually satisfies ${\rm Z}_\pi = {\rm G}_2$. 
See \cite[Ch. 8]{CR} for a conjectural formula for the number, denoted ${\rm G}_2(2b,2c)$ {\it loc.\ cit.}, of $\pi \in \Pi_c$ of type ${\rm G}_2$ and given $(a,b,c)$. 

\vskip 5pt

(iii) Finally, the following proposition classifies the $\pi \in \Pi_c$ with ${\rm Z}_\pi={\rm PGL}_2$. 
Let $\Sigma$ be the set of cuspidal automorphic representations of ${\rm PGL}_2$ generated by a holomorphic cusp form for ${\rm SL}_2(\Z)$. As a special case of the work of Newton-Thorne \cite[Thm. A]{NT}, for any $\sigma \in \Sigma$ there 
is a cuspidal automorphic representation ${\rm Sym}^6 \sigma$ of ${\rm PGL}_7$ over $\Q$ which is a strong functorial lifting of $\sigma$ for ${\rm Sym}^6 : {\rm SL}_2(\C) \longrightarrow {\rm SL}_7(\C)$. This representation ${\rm Sym}^6 \sigma$ is selfdual orthogonal and algebraic regular. 
By Arthur's multiplicity formula \cite{A} (see also \cite[\S 8.5.1]{CL}), there is thus a unique, level $1$, cuspidal automorphic representation $\pi^\flat$ of ${\rm Sp}_6$ over $\Q$ such that $\pi_\infty^\flat$ is a holomorphic discrete series and $\Psi(\pi^\flat,{\rm std})={\rm Sym}^6 \sigma$. 
By \cite[Prop. 3.2]{CR}, there is a unique $\pi_\sigma$ of Siegel type such that ${\pi_\sigma}|_{{\rm Sp}_6}$ contains $\pi^\flat$.
\par

\begin{prop} 
\label{prop:sym6} 
The map $\sigma \mapsto \pi_\sigma, \Sigma \rightarrow \Pi_c,$ is a bijection onto the subset of $\pi \in \Pi_c$ with ${\rm Z}_\pi={\rm PGL}_2$.
\end{prop}

\begin{proof} Assume $\pi \in \Pi_c$ satisfies ${\rm Z}_\pi={\rm PGL}_2$ and has weights $(a+1,b+2,c+3)$. It only remains to show the existence of $\sigma \in \Sigma$ such that $\pi=\pi_\sigma$. For $\iota \in {\rm I}_{\pi,\ell}$, we may view ${\rm r}_{\pi,{\rm std},\iota}$ as a $3$-dimensional Galois representation $$\rho: {\rm Gal}(\overline{\Q}/\Q) \longrightarrow {\rm PGL}_2(\overline{\Q}_\ell) \simeq {\rm SO}_3(\overline{\Q}_\ell)$$
with Zariski-dense image, unramified outside $\ell$ and crystalline at $\ell$. 
Recall that the standard representation of the principal ${\rm PGL}_2$ in ${\rm Spin}_7$ is the irreducible $7$-dimensional representation. As the Hodge-Tate weights of ${\rm r}_{\pi,{\rm std},\iota}$ are $0,\pm a,\pm b,\pm c$, and as ${\rm Z}_{\pi,\iota}={\rm PGL}_2$, the Hodge-Tate weights
of $\rho$ must be $-c,0,c$ (and we must have $b=2c$ and $a=3c$). 
By \cite[Prop. 5.1.1]{taibiAn}, $\rho$ lifts to a continuous representation $\widetilde{\rho} : {\rm Gal}(\overline{\Q}/\Q) \longrightarrow {\rm GL}_2(\overline{\Q}_\ell)$ which is unramified outside $\ell$, crystalline at $\ell$ with Hodge-Tate weights $0$, $c$ (and necessarily irreducible). 
By construction, we have an isomorphism 
\begin{equation}
\label{eq:sym6iso}
{\rm Sym}^6 \widetilde{\rho} \otimes (\det \widetilde{\rho})^{-3} \simeq {\rm std} \circ {\rm r}_{\pi,{\rm std},\iota}.
\end{equation}
As the complex conjugations $g \in {\rm Gal}(\overline{\Q}/\Q)$ have a non-trivial image in ${\rm r}_{\pi,{\rm std},\iota}$ by \cite[Prop. 1]{taylor}, we also have $\rho(g) \neq 1$, and then $\det \widetilde{\rho}(g)=-1$ (so $c$ is odd). 
We may assume $\ell \geq 5$, so by the Fontaine-Mazur conjecture for $\GL_2$ in that case (Emerton, Kisin, Pan... see e.g. \cite{pan}), $\widetilde{\rho}$ is an $\ell$-adic representation associated by Deligne to some $\sigma \in \Sigma$. 
By \eqref{eq:sym6iso}, this implies $\Psi(\pi,{\rm std})= {\rm Sym}^6 \sigma$, hence $\pi=\pi_\sigma$, concluding the proof.
\end{proof}

In particular, the first $\pi \in \Pi_c$ with ${\rm Z}_\pi = {\rm PGL}_2$ is a lift from Ramanujan's $\Delta$ function and occurs for $(a,b,c)=(33,22,11)$. 
  
  \section{\bf Appendix A: Similitude Theta Correspondence}
In this appendix, we collect and establish some results from the theory of  local theta correspondence for similitude classical groups that we need in the main body of the article, especially  concerning the theta lifts of unramified representations. Throughout this appendix, $F$ will denote a non-Archimedean local field with ring of integers $\mathcal{O}_F$ and residue field of cardinality $q$. 

\vskip 5pt
\subsection{\bf Setup}
We briefly recall the setup of isometry and similitude theta correspondences over the local field $F$. Fix a nontrivial additive charatcer $\psi: F \rightarrow \C^{\times}$. Suppose that $(W, \langle -,-\rangle_W)$ is a symplectic vector space of dimension $2n$ and $(V,q)$ a quadratic space of dimension $2m$ with associated symmetric bilinear form $\langle-,-\rangle_V$ .  For simplicity, we shall assume the following running hypotheses throughout this appendix: 
\vskip 5pt

\begin{itemize}
\item $(V,q)$ is split, so that  ${\rm disc}(V,q) =1 \in F^{\times}/ F^{\times 2}$ and the orthogonal group $\O(V)$ is split;
\item $ m =\dim V/2  > n = \dim W/2$.
\end{itemize}  
 Then we have the isometry dual pair
\[ i: \Sp(W) \times \O(V) \longrightarrow \Sp(V \otimes W). \]
Depending on $\psi$, the map $\iota$ can be lifted to the metaplectic cover $\Mp(V \otimes W)$:
\[  i_{\psi}: \Sp(W) \times \O(V) \longrightarrow \Mp(V \otimes W). \]
The Weil representation $\omega_{\psi}$ of $\Mp(V \otimes W)$ can then be pulled back via $i_{\psi}$ to yield the Weil representation $\omega_{V,W, \psi}$ of $\Sp(W) \times \O(V)$.  
\vskip 5pt

For a representation $\pi \in {\rm Irr}(\Sp(W))$, we consider its big theta lift to $\O(V)$, defined by:
\[  \Theta_{\psi}(\pi) = (\omega_{V,W, \psi} \otimes \pi^{\vee})_{\Sp(W)}, \]
which is a smooth representation of $\O(V)$ (for a representation $U$ of a group $G$ we denote by $U_G$ the largest quotient of $U$ over which $G$ acts trivially). The Howe duality theorem says that $\Theta_{\psi}(\pi)$ has finite length (possibly zero) and a unique irreducible quotient $\theta_{\psi}(\pi)$ (possibly zero). Hence, one has a map
\[ \theta_{\psi}:  {\rm Irr}(\Sp(W)) \longrightarrow {\rm Irr}(\O(V)) \cup \{0 \}. \]
Moreover, the Howe duality theorem further asserts that this map is injective on the domain where it does not vanish.  The map $\theta_{\psi}$ is the local theta lifting for isometry groups.
\vskip 5pt

 Under our hypothesis that $m > n$, it turns out that if $\theta_{\psi}(\pi) \in {\rm Irr}(\O(V))$ is nonzero, then it remains irreducible when restricted to the special orthogonal group $\SO(V)$. Hence, composing with restriction to $\SO(V)$, one has in fact a map
\[  \theta_{\psi} :  {\rm Irr}(\Sp(W)) \longrightarrow {\rm Irr}(\SO(V)) \cup \{0 \}. \]
  
 \vskip 5pt
 We would like to extend the above theory and results to the setting of similitude groups. 
 These similitude groups are defined by:
 \[ \GSp(W) = \{ (g, \lambda) \in \GL(W) \times \mathbb{G}_m: g^* (\langle -, - \rangle_W) = \lambda \cdot \langle -,  - \rangle_W \} \]
 and
 \[  \GO(V) = \{ (h, \lambda) \in \GL(V) \times \mathbb{G}_m: h^* (\langle -, - \rangle_V) = \lambda \cdot \langle -,  - \rangle_V \}. \]
 The similitude characters
 \[  {\rm sim}:  \GSp(W) \longrightarrow \mathbb{G}_m \quad \text{and} \quad {\rm sim}: \GO(V) \longrightarrow \mathbb{G}_m \]
 are given by the second projection.  By our hypotheses, these similitude characters are surjective onto $F^{\times}$ on taking $F$-valued points. 
 \vskip 5pt
 
   Consider now the group
  \[ R =  ( \GSp(W) \times \GO(V))^{{\rm sim}} = \{ ((g, h) \in \GSp(W) \times \GO(V):  {\rm sim}(g) \cdot {\rm sim}(h) =1\}. \]  
 One has the short exact sequences:
 \[  \begin{CD}
1 @>>> \Sp(W) @>>>  (\GSp(W) \times \GO(V))^{\sim} @>p>> \GO(V) @>>> 1. 
\end{CD} \]
\[   \begin{CD}
1 @>>> \O(V) @>>>  (\GSp(W) \times \GO(V))^{\sim} @>q>> \GSp(W) @>>> 1
\end{CD} \]
where $p$ and $q$ are the natural projections onto the relevant factors. Moreover, observe that
one has an exact sequence
\[  \begin{CD}
1 @>>> \Sp(W) \times \O(V) @>>> R @>>>  \mathbb{G}_m @>>> 1. \end{CD} \]

 \vskip 5pt
It turns out that the Weil representation $\omega_{V,W, \psi}$ can be extended to the slightly larger group $R= R(F)$.  
The extension is not unique but we fix one as in \cite{GT1} (which differs from the normalization in \cite{Ro}).  Then we define the similitude Weil representation as:
\[  \Omega_{V,W} := {\rm ind}_R^{\GSp(W)    \times \GO(V)} \omega_{V,W, \psi}. \]
It turns out that this representation is independent of the choice of $\psi$ (because $V$ has trivial discriminant).
For a representation $\pi \in {\rm Irr}(\GSp(W))$, we can then define its big theta lift (which is a smooth representation of $\GO(V)$) by
\[  \Theta(\pi) := (\Omega_{V,W} \otimes \pi^{\vee})_{\GSp(W)}. \]
One may also define $\Theta(\pi)$ as
\[  \Theta(\pi) = (\omega_{V,W, \psi} \otimes \pi^{\vee})_{\Sp(W)}. \]
 
\vskip 5pt

It follows from \cite{Ro, GT1} that the analog of the Howe duality theorem holds for similitude groups. In other words, for each $\pi \in {\rm Irr}(\GSp(W))$,
$\Theta(\pi)$ has finite length and unique irreducible quotient $\theta(\pi)$  (if nonzero). Hence, one obtains a map
\[ \theta: {\rm Irr}(\GSp(W)) \longrightarrow {\rm Irr}(\GO(V)) \cup \{ 0 \}, \]
which is injective on the domain of nonvanishing. This map $\theta$ is the local theta lifting for similitude groups. Moreover, under our definition of the extension of $\omega_{\psi}$ to $R$, it turns out that $\Theta(\pi)$ has central character equal to that of $\pi$.
As in the isometry case, for $\pi \in {\rm Irr}(\GSp(W))$, $\theta(\pi)$ is irreducible (or zero) when restricted to $\GSO(V)$ under our hypothesis that $m > n$. Hence, we have a map
\begin{equation} \label{E:GSO}
    \theta: {\rm Irr}(\GSp(W)) \longrightarrow {\rm Irr}(\GSO(V)) \cup \{ 0 \}. \end{equation}
 \vskip 10pt
 
\subsection{\bf Unramified representations}
\label{sec:unramifiedrep}
We will need to give an explicit  description of the maps $\theta_{\psi}$ and $\theta$ on the subset of unramified representations. 
 Let us recall this notion more precisely.
 \vskip 5pt

Suppose that $G$ is a split connected reductive group over $F$. Then $G$ has a reductive model (the Chevalley model) over the ring of integers $\mathcal{O}_F$, so that $K := G(\mathcal{O}_F)$ is a hyperspecial maximal compact subgroup.  Fix the tuple
\[ T \subset B \subset G \]
consisting of a maximal split torus $T$ contained in a Borel subgroup $B$ defined over $\mathcal{O}_F$. 
\vskip 5pt

The subset ${\rm Irr}_K(G(F)) \subset {\rm Irr}(G(F))$ of $K$-unramified representations consists of those irreducible representations $\pi$ such that $\pi^K \ne 0$, in which case $\dim \pi^K =1$.   The following are the main facts about unramified representations we need; they are largely consequences of the so-called Satake isomorphism.
\vskip 5pt

\begin{itemize}
\item  If $\pi \in {\rm Irr}_{K}(G(F))$, then $\pi$ is a constituent of a principal series representation ${\rm Ind}_{B}^{G} (\chi)$, where  
\[ \chi : T(\mathcal{O}_F) \backslash T(F)  =X_*(T) \longrightarrow \C^{\times} \]
is an unramified character of $T(F)$, well defined up to the action of the Weyl group.  Moreover, $\dim {\rm Ind}_{B}^{G} (\chi)^K =1$, so that $\pi$ is the unique $K$-unramified subquotient of ${\rm Ind}_{B}^{G} (\chi)$.
 \vskip 5pt

\item  With $T^{\vee}$ as the dual torus of $T$, note that
\[  \Hom(X_*(T), \C^{\times}) = X^*(T) \otimes \C^{\times} = X_*(T^{\vee}) \otimes \C^{\times} = T^{\vee}(\C). \]
Hence the Weyl-orbit of the  unramified character $\chi$ of $T(F)$  corresponds to the Weyl-orbit of an element  $c({\pi}) \in T^{\vee}(\C)$, or equivalently
   a semisimple conjugacy class in the Langlands dual group  $G^{\vee}(\C)$ of $G$. This  semisimple class $c({\pi}) $   is the Satake parameter of $\pi$.  
 \end{itemize}
\vskip 5pt

As an example,   the trivial representation of $G(F)$ is certainly unramified and its Satake parameter is described as follows. Let 
\[  \iota_{\rm reg}: \SL_2(\C) \longrightarrow G^{\vee}(\C) \]
be a principal or regular $\SL_2$. Then the Satake parameter $c({\rm triv})$ of the trivial representation is 
the conjugacy class of
\[   c({\rm triv}) = \iota_{\rm reg} \left( \begin{array}{cc} 
q^{1/2} & 0  \\
0 & q^{-1/2} \end{array} \right), \]
where $q$ is the cardinality of the residue field of $F$. 
\vskip 5pt

 \vskip 5pt
\subsection{\bf Unramified isometry theta correspondence} 
 Now let's return to our setting where $W$ and $V$ are symplectic and quadratic spaces,  with $\dim W = 2n$ and $\dim V =2m$.
Since $V$ and $W$ are both split with trivial discriminants, we may fix self-dual lattices
\[  \Lambda \subset W \quad \text{and} \quad \Lambda' \subset V. \]
These lattices endow the groups $\GSp(W)$, $\Sp(W)$,  $\GSO(V)$ and $\SO(V)$ with its Chevalley structure over  $\mathcal{O}_F$.
The hyperspecial maximal compact subgroups
 \[ K  = \GSp(W) (\mathcal{O}_F) \quad \text{and} \quad K' =  \GSO(V)(\mathcal{O}_F)  \]
 are the stabilizers of these lattices  in the respective similitude groups. Likewise, we have the hyperspecial maximal compact subgroups
\[  K_{\flat} = K \cap \Sp(W) \quad \text{and} \quad K'_{\flat} = K' \cap \SO(V) \]
in the respective isometry groups.    

Let us fix a Witt basis 
\[ \{e_1,\dots,e_n, f_n, \dots,f_1\} \quad \text{of $\Lambda$} \]
so that $\langle e_i, f_j \rangle_W = \delta_{ij}$.  Likewise, we may fix such a Witt basis for the lattice $\Lambda'$.  These Witt bases define 
 maximal tori  over $\mathcal{O}_F$: 
 \[  T \subset  \GSp(W)  \quad \text{and} \quad T_{\flat}  = T \cap \Sp(W) \subset \Sp(W) \]
 and
 \[  T'  \subset  \GSO(V) \quad \text{and}  \quad   T'_{\flat} = T' \cap \SO(V) \subset \SO(V), \] 
with the property that the relevant Witt basis is a basis of eigenvectors for the action of the relevant maximal torus. Thus, the Witt bases provide isomorphisms
\begin{equation} \label{E:tori}
   T_{\flat}  \simeq  \prod_{i=1}^n \mathbb{G}_m \quad \text{and} \quad   T \simeq \left( \prod_{i=1}^n \mathbb{G}_m \right)  \times \mathbb{G}_m   \end{equation}
where the first isomorphism is given by the eigencharacters  of $T$ on the ordered basis $\{e _1,\dots,e_n \}$  and likewise for the projection of the second isomorphism to the first factor; the second projection $T \rightarrow \mathbb{G}_m$ is given by the similitude character. 
In particular, this provides a complementary $\mathbb{G}_m$  to $T_{\flat}$ in $T$:
\begin{equation} \label{E:tori2}
  T  = T_{\flat} \times \mathbb{G}_m. \end{equation}
In this identification, the center $Z$ of $\GSp(W)$ is given by
\[  
\left( (z,\dots,z), z^2 \right) \in T_{\flat} \times \mathbb{G}_m,   \quad z \in \mathbb{G}_m. \] 
\vskip 5pt

 In view of (\ref{E:tori}) and (\ref{E:tori2}),  we may write a  character $\chi_{\flat}: T_{\flat}(F) \rightarrow \C^{\times}$  as $\chi_{\flat} = \chi_1 \times\dots\times \chi_n$ and a character $\chi$ of $T(F)$ as $\chi = \chi_{\flat} \times \mu$.  Restricted to the center $Z$ of $\GSp(W)$,   
 \[  \chi|_Z =  \mu^2 \cdot \prod_{i=1}^n \chi_i . \]
Of course, we have the analogous discussion for the tori $T'_{\flat}$ and $T'$ in $\SO(V)$ and $\GSO(V)$. 
 \vskip 5pt

Containing  each maximal torus above, we also have a Borel subgroup over $\mathcal{O}_F$, which is upper triangular with respect to the relevant Witt basis, so that we have:
 \[   T \subset B \subset \GSp(W)  \quad \text{and} \quad T_{\flat}  = T \cap \Sp(W) \subset B_{\flat}   \subset \Sp(W), \]
and likewise 
\[  T'  \subset B' \subset \GSO(V) \quad \text{and}  \quad   T'_{\flat} \subset B'_{\flat} \subset \SO(V). \] 
Given a character $\chi_{\flat} = \chi_1 \times\dots\times \chi_n$  of $T_{\flat}(F)$, we will denote the associated (normalized) principal series  representation as: 
\begin{equation} \label{E:unramps}
   i_{B_{\flat}}(\chi_1 \times\dots\times \chi_n) := {\rm Ind}_{B_{\flat}}^{\Sp(W)} \chi_{\flat}. \end{equation}
 Likewise, we will write $i_B(\chi_1 \times \dots\times \chi_n \times \mu)$ for a principal series representation of $\GSp(W)$.

\vskip 5pt

After the preparation above, we can now recall the following result for isometry theta correspondence.
\vskip 5pt

\begin{prop} \label{P:unram1}
Assume that $m \geq n+1$.
\vskip 5pt

 \vskip 5pt

\noindent  (i) Let  $\pi_{\flat} \in {\rm Irr}(\Sp(W))$ be a constituent of a principal series representation $i_{B_{\flat}}(\chi_1 \times \chi_2 \times\dots\times \chi_n)$. If $\theta_{\psi}(\pi_{\flat}) \ne 0$, then $\theta_{\psi}(\pi_{\flat})$ is a constituent of the induced representation 
\[  i_{B_{\flat}'}(\chi_1 \times \chi_2\times\dots\times \chi_n \times |-|^{m-n-1} \times |-|^{m-n-2} \times\dots\times 1)   \]
of $\SO(V)$.
\vskip 5pt

\noindent (ii) Suppose that $\psi$ has conductor $\mathcal{O}_F$. For $\pi_{\flat} \in {\rm Irr}_{K_{\flat}}(\Sp(W))$, we have $\theta_{\psi}(\pi_{\flat}) \in {\rm Irr}_{K'_{\flat}}(\SO(V))$ (in particular it is nonzero). 
\vskip 5pt

\noindent (iii) In the context of (ii), the Satake parameters  of $\pi_{\flat}$ and $\theta_{\psi}(\pi_{\flat})$ are related as follows. 
Consider the natural embedding
\[  \iota_{\flat} : \SO_{2n+1}(\C) \times \SO_{2m-2n-1}(\C) \longrightarrow \SO_{2m}(\C) = \SO(V)^{\vee}. \]
Then
\[  c({\theta_{\psi}(\pi_{\flat})}) =   \iota( c({\pi_{\flat}}), c({\rm triv})) \]
where $c({\rm triv})$ is the Satake parameter of the trivial representation of the split group $\Sp(W')$, with $\dim W' = 2m-2n-2$, whose dual group is $\SO_{2m-2n-1}(\C)$. 

\end{prop}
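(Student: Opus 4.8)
\textbf{Proof proposal for Proposition \ref{P:unram1}.}

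The plan is to establish the three parts in the order stated, with part (i) as the computational core and parts (ii)--(iii) following by combining (i) with the structure of unramified principal series. First I would set up the standard doubling/see-saw description of the Weil representation restricted to a Siegel parabolic, or more elementarily, use the classical computation of the Jacquet module of the Weil representation $\omega_{V,W,\psi}$ along the unipotent radical of the Siegel parabolic $P_{\flat} \subset \SO(V)$ with Levi $\GL_{m-n-1} \times \SO(W \oplus \mathbb{H}^{?})$ adapted to $\dim W = 2n$. Concretely, one uses the well-known fact (due to Kudla, see \cite{Ro}) that the big theta lift $\Theta_\psi(\pi_{\flat})$ embeds, after taking an appropriate Jacquet functor, into a representation parabolically induced from $\Theta'_\psi(\pi_{\flat}) \otimes (\text{explicit character of the }\GL\text{-part})$, where $\Theta'_\psi$ is the theta lift to a smaller orthogonal group. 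Iterating this $m-n-1$ times peels off the chain of characters $|-|^{m-n-1}, |-|^{m-n-2}, \dots, |-|^{0}=1$: each step of the tower $\SO(V) \supset \SO(V^{-}) \supset \cdots$ contributes one Tate twist coming from the modulus character of the relevant Siegel parabolic and the half-sum of roots. At the bottom of the tower $V$ has been reduced to a $(2n+2)$-dimensional quadratic space, and there the theta lift from $\Sp(W)$ is in the ``almost equal rank'' range, where it is known that the lift of a constituent of $i_{B_{\flat}}(\chi_1 \times \cdots \times \chi_n)$ is a constituent of $i(\chi_1 \times \cdots \times \chi_n \times 1)$ on $\SO_{2n+2}$ (persistence of the inducing data, no new character). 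Assembling the chain yields (i).

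For part (ii), I would assume $\psi$ has conductor $\mathcal{O}_F$ so that the Weil representation $\omega_{V,W,\psi}$ has a distinguished $(K_{\flat} \times K'_{\flat})$-fixed vector (the characteristic function of the self-dual lattice $\Lambda \otimes \Lambda'$, using the chosen self-dual lattices $\Lambda \subset W$, $\Lambda' \subset V$). Given $\pi_{\flat} \in {\rm Irr}_{K_{\flat}}(\Sp(W))$, pairing the spherical vector of $\pi_{\flat}^{\vee}$ against this lattice model vector produces a nonzero $K'_{\flat}$-fixed vector in $\Theta_\psi(\pi_{\flat})$; one must check this pairing is nonzero, which follows from the nonvanishing of the local doubling zeta integral at the unramified data, or more simply from the fact that the spherical vector of the Weil representation is not annihilated by the $\Sp(W)$-coinvariants functor against a spherical vector — this is the standard unramified nonvanishing statement and can be cited from \cite{Ro} or \cite{KR}. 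Since $\dim V - \dim W \geq 2$ we are past the first occurrence, so $\theta_\psi(\pi_{\flat})$ is nonzero and, by the running hypothesis $m > n$, already irreducible on $\SO(V)$; having a $K'_{\flat}$-fixed vector, it lies in ${\rm Irr}_{K'_{\flat}}(\SO(V))$.

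For part (iii), combine (i) and (ii): by (ii) the lift $\theta_\psi(\pi_{\flat})$ is the unique $K'_{\flat}$-spherical constituent of the principal series in (i); by the uniqueness of the spherical constituent and the Satake isomorphism, its Satake parameter is determined by the inducing character $\chi_1 \times \cdots \times \chi_n \times |-|^{m-n-1} \times \cdots \times 1$, read off as a semisimple conjugacy class in $\SO_{2m}(\C)$. One then matches this against $\iota_{\flat}(c(\pi_{\flat}), c({\rm triv}))$: the parameter $c(\pi_{\flat}) \in \SO_{2n+1}(\C)$ is by definition the conjugacy class built from $\chi_1, \dots, \chi_n$, and by the computation in \S\ref{sec:unramifiedrep} the Satake parameter of the trivial representation of the split $\Sp(W')$ with $\dim W' = 2m-2n-2$ is $\iota_{\rm reg}(\mathrm{diag}(q^{1/2}, q^{-1/2})) \in \SO_{2m-2n-1}(\C)$, whose eigenvalues on the standard representation are exactly $q^{(m-n-1)}, \dots, q, 1, q^{-1}, \dots, q^{-(m-n-1)}$, i.e.\ precisely the values of $|-|^{m-n-1}, \dots, |-|^{0}, \dots, |-|^{-(m-n-1)}$ at a uniformizer. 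Hence the two conjugacy classes coincide, proving (iii).

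The main obstacle I expect is part (i): correctly bookkeeping the tower of theta lifts and verifying that at each stage exactly one character $|-|^{m-n-1-j}$ (and no spurious extra data, no sign, no unexpected twist) is produced, together with confirming the ``bottom of the tower'' statement that the theta lift in the equal-rank-plus-one range preserves the inducing characters up to appending a single trivial character. This requires a careful application of the Kudla filtration of the Weil representation along maximal parabolics and tracking modulus characters; everything else (the spherical nonvanishing in (ii), the Satake bookkeeping in (iii)) is comparatively routine once (i) is in hand.
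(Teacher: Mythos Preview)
Your proposal is correct and follows the same approach that the paper ultimately relies on: the paper's own proof of this proposition is simply a pair of citations, attributing (i) to Kudla's Jacquet-module computation of the Weil representation \cite[Thm.~2.5, Cor.~2.6--2.7, Thm.~2.8]{K} and (ii)--(iii) to Howe and Rallis \cite[\S 6]{R1}. Your sketch is essentially an outline of what those references do --- the Kudla filtration to track cuspidal support for (i), the lattice-model spherical vector for nonvanishing in (ii), and the Satake bookkeeping for (iii) --- so there is no substantive difference to compare.
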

\vskip 5pt

\begin{proof}
The statement (i)  is a special case of a result of Kudla \cite[Thm. 2.5, Cor. 2.6 and Cor. 2.7]{K}, obtained as a consequence of his computation of the Jacquet modules of the Weil representation \cite[theorem 2.8]{K}.
Statements (ii) and (iii) are results of Howe  and Rallis \cite[\S 6]{R1}.
\end{proof}
\vskip 5pt

For the sake of concreteness, let us describe the map $\iota_{\flat}$ in Proposition \ref{P:unram1}(iii) on the level of maximal tori, as this is all that is needed for the unramified correspondence. Recall that we have fixed identifications:
\[  T_{\flat} = \prod_{i=1}^n \mathbb{G}_m \quad \text{and} \quad T_{\flat}' = \prod_{i=1}^m \mathbb{G}_m \]
so that one has
\[  T_{\flat}^{\vee} = (\C^{\times})^n \quad \text{and} \quad {T_{\flat}'}^{\vee} = (\C^{\times})^m. \]
We likewise have a maximal torus $S_{\flat} \subset \Sp(W')$ where $W'$ is as in Proposition \ref{P:unram1}(iii), with fixed identification
\[  S_{\flat} = \prod_{i=1}^{m-n-1} \mathbb{G}_m \quad \text{and hence } \quad S_{\flat}^{\vee} = (\C^{\times})^{m-n-1}. \]
Restricted to these maximal tori, the map 
\begin{equation}   \label{E:iotaflat}
  \iota_{\flat}: T_{\flat}^{\vee} \times S_{\flat}^{\vee} \longrightarrow {T'_{\flat}}^{\vee} \end{equation}
is given explicitly by:
\[  \iota \left( (t_1,\dots,t_n),  (s_1,\dots,s_{m-n-1}) \right)  =  (t_1,\dots,t_n, s_1,\dots, s_{m-n-1}, 1). \]
Now,  if $\pi_{\flat}$ is an unramified representation of $\Sp(W)$ contained in a principal series representation $i_{B_{\flat}}(\chi_1,\dots,\chi_n)$,  then its Satake
parameter is the conjugacy class of
\[ c({\pi_{\flat}}) = (\chi_1(\varpi), \dots,\chi_n(\varpi)) \in (\C^{\times})^n. \]
On the other hand, the Satake parameter of the trivial representation is 
\[ c({\rm triv}) = (q^{m-n-1},\dots, q) \in (\C^{\times})^{m-n-1}.  \]
Hence,
\[ \iota_{\flat}(c({\pi_{\flat}}), c({\rm triv}) )= ( \chi_1(\varpi), \dots,\chi_n(\varpi), q^{m-n-1},\dots,q, 1) \in (\C^{\times})^m. \]
In view of Proposition \ref{P:unram}(i), this is precisely the Satake parameter of $\theta_{\psi}(\pi_{\flat})$.

\vskip 10pt
\subsection{\bf Unramified similitude theta correspondence}
 We would now like to establish the analog of the above proposition for the similitude theta correspondence. Before that, 
let us make an observation about the interaction between the principal series representations for the similitude and isometry groups. 
 \vskip 5pt
 
 Since $B_{\flat} \backslash \Sp(W) = B \backslash \GSp(W)$,  the natural restriction map of functions define a $\Sp(W)$-equivariant isomorphism
 \[   i_B(\chi_1 \times\dots\times \chi_n \times \mu)  \simeq i_{B_{\flat}} (\chi_1 \times\dots\times \chi_n). \]
 Suppose now that the characters $\chi_i$ and $\mu$ are unramified, then one has
 \[ 
1 =  \dim i_B(\chi_1 \times \dots\times \chi_n \times \mu)^K  \leq  \dim i_B(\chi_1 \times \dots\times \chi_n \times \mu)^{K_{\flat}} = 
 \dim  i_{B_{\flat}}(\chi_1 \times\dots\times \chi_n)^{K_{\flat}} =1, \]
 so that  equality holds.  We record the consequence of this observation as a lemma.
 
 \begin{lemma} \label{L:interaction}
 Suppose that each $\chi_i$ and $\mu$ are unramified characters. Then
  \[
   \dim i_B(\chi_1 \times \dots\times \chi_n \times \mu)^{K} =  \dim i_B(\chi_1 \times \dots\times \chi_n \times \mu)^{K_{\flat}} =1 \]
  In particular, if $\pi$ is a constituent of $i_B(\chi_1 \times \dots\times \chi_n \times \mu)$, then
 \[
    \dim \pi^K = \dim \pi^{K_{\flat}} = 0 \quad \text{or} \quad 1. \]
    \end{lemma}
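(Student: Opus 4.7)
The plan is to first deduce the identification of induced representations from a basic double coset computation, then apply the standard Iwasawa decomposition argument for unramified principal series, and finally use exactness of the invariants functor.

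The key geometric input is the decomposition $\GSp(W) = \Sp(W) \cdot B$. This holds because $B$ contains a split maximal torus surjecting onto the similitude line $\mathbb{G}_m$ via ${\rm sim}$, so any $g \in \GSp(W)$ can be written as $g = s \cdot b$ with $b \in B$ chosen to have the same similitude ratio as $g$ and $s=gb^{-1} \in \Sp(W)$. Combined with the tautological identity $B \cap \Sp(W) = B_\flat$, this yields a canonical $\Sp(W)$-equivariant bijection $B_\flat \backslash \Sp(W) \overset{\sim}{\longrightarrow} B \backslash \GSp(W)$. Restriction of functions then induces the $\Sp(W)$-equivariant isomorphism
\[ i_B(\chi_1 \times \cdots \times \chi_n \times \mu)\big|_{\Sp(W)} \;\simeq\; i_{B_\flat}(\chi_1 \times \cdots \times \chi_n), \]
used tacitly in \eqref{E:unramps} and the ambient paragraph.

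Next I would invoke the standard Iwasawa decomposition for the hyperspecial subgroups: $\GSp(W) = B \cdot K$ and $\Sp(W) = B_\flat \cdot K_\flat$. For an unramified character $\widetilde{\chi} = \chi_1 \times \cdots \times \chi_n \times \mu$ of $T(F)$, these decompositions force a spherical vector in $i_B(\widetilde{\chi})$ to be determined by its value at the identity, hence $\dim i_B(\widetilde{\chi})^K = 1$; the analogous assertion gives $\dim i_{B_\flat}(\chi_1 \times \cdots \times \chi_n)^{K_\flat}=1$. Combining these with the isomorphism above and the inclusion $K_\flat \subset K$ yields
\[ 1 = \dim i_B(\widetilde{\chi})^K \;\leq\; \dim i_B(\widetilde{\chi})^{K_\flat} = \dim i_{B_\flat}(\chi_1\times\cdots\times\chi_n)^{K_\flat} = 1, \]
so all three dimensions equal $1$, which is the first assertion.

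For the ``in particular'' claim, the main observation is that since we work over $\C$ and the subgroups $K$, $K_\flat$ are compact open in a totally disconnected group, the functors $V \mapsto V^K$ and $V \mapsto V^{K_\flat}$ on smooth representations are exact. Hence for any irreducible constituent (subquotient) $\pi$ of $i_B(\widetilde{\chi})$ we have $\dim \pi^K \leq \dim i_B(\widetilde{\chi})^K = 1$ and $\dim \pi^{K_\flat} \leq \dim i_B(\widetilde{\chi})^{K_\flat} = 1$, giving the stated dichotomy. No serious obstacle is expected; the only subtle point is verifying that $B$ does surject onto the similitude line (which is immediate from the split structure of $T$), but this is routine.
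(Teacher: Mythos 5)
Your proof is correct and follows essentially the same route as the paper: the identity $B_\flat \backslash \Sp(W) = B \backslash \GSp(W)$ giving the $\Sp(W)$-equivariant isomorphism, the one-dimensionality of spherical vectors in unramified principal series via Iwasawa decomposition, the squeeze $1 = \dim i_B(\widetilde{\chi})^K \leq \dim i_B(\widetilde{\chi})^{K_\flat} = 1$, and exactness of the invariants functor for the ``in particular'' claim. You merely spell out a few details (surjectivity of ${\rm sim}$ on $B$, the Iwasawa decompositions) that the paper leaves implicit.
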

\noindent  The analogous lemma holds in the context of  $\SO(V)$ and $\GSO(V)$.

\vskip 5pt

Here is the main result of this appendix.
\vskip 5pt

\begin{prop}  \label{P:unram2}
 Assume that $m \geq n+1$.
 \vskip 5pt
 
 \noindent (i) Let $\pi \in {\rm Irr}(\GSp(W))$ be a constituent of a principal series representation 
 \[ i_B(\chi_1 \times \chi_2 \times\dots\times \chi_n \times \mu).\]
  If $\Theta(\pi) \ne 0$, then every irreducible subquotient of $\Theta(\pi)$ is a subquotient  of the principal series representation 
\[ i_{B}(\chi_1 \times \chi_2\times\dots\times \chi_n \times |-|^{m-n-1} \times |-|^{m-n-2} \times\dots\times 1 \times \mu |-|^{- \frac{(m-n)(m-n-1)}{4}}) \]
of $\GSO(V)$.
\vskip 5pt

\noindent (ii) For $\pi \in {\rm Irr}_{K}(\GSp(W))$, we have $\theta(\pi) \in {\rm Irr}_{{K'}}(\GSO(V))$ (in particular it is nonzero). 
\vskip 5pt

\noindent (iii) In the context of (ii), the Satake parameters $c({\pi})$ and $c({\theta(\pi)})$ of $\pi$ and $\theta(\pi)$ are related as follows. One has a natural commutative diagram
 \begin{equation} 
   \begin{CD}
\GSpin_{2n+1}(\C) \times \GSpin_{2m-2n-1}(\C) @>\iota>>  \GSpin_{2m}(\C)  \\
@VVV  @VVV  \\
\SO_{2n+1}(\C) \times \SO_{2m-2n-1}(\C)  @>\iota_{\flat}>> \SO_{2m}(\C).  \end{CD} \end{equation}
Then
\[  c({\theta(\pi)}) = \iota(c({\pi}), c({\rm triv})), \]
where $c({\rm triv})$ is the Satake parameter of the trivial representation of the split group $\GSp(W')$, with $\dim W' = 2m-2n-2$, whose dual group is $\GSpin_{2m-2n-1}(\C)$.
 \end{prop}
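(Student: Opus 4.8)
The strategy is to reduce all three assertions to the isometry statement of Proposition~\ref{P:unram1}, using the identity $\Theta(\pi) = (\omega_{V,W,\psi} \otimes \pi^\vee)_{\Sp(W)}$ recorded above, in which the residual action of $\GO(V)$ is the one coming from the extension of the Weil representation to $R = (\GSp(W) \times \GO(V))^{\rm sim}$ fixed above (following \cite{GT1}). Since $\Sp(W) \cdot Z$ has finite index in $\GSp(W)$, the restriction $\pi|_{\Sp(W)}$ is a finite direct sum $\bigoplus_i \pi_\flat^{(i)}$ of irreducible representations, and since $B_\flat \backslash \Sp(W) = B \backslash \GSp(W)$, if $\pi$ is a constituent of $i_B(\chi_1 \times \cdots \times \chi_n \times \mu)$ then each $\pi_\flat^{(i)}$ is a constituent of $i_{B_\flat}(\chi_1 \times \cdots \times \chi_n)$. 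Inside $R$ the subgroup $\{1\} \times \O(V)$ acts on $\omega_{V,W,\psi} \otimes \pi^\vee$ only through the Weil representation, so taking $\Sp(W)$-coinvariants yields an isomorphism of $\O(V)$-representations
\[ \Theta(\pi)|_{\O(V)} \;\simeq\; \bigoplus_i \Theta_\psi(\pi_\flat^{(i)}). \]
By Proposition~\ref{P:unram1}(i), each summand restricted to $\SO(V)$ is a constituent of $i_{B'_\flat}(\chi_1 \times \cdots \times \chi_n \times |-|^{m-n-1} \times \cdots \times 1)$; lifting back to $\GSO(V)$ shows that every irreducible subquotient of $\Theta(\pi)$ is a constituent of $i_{B'}(\chi_1 \times \cdots \times \chi_n \times |-|^{m-n-1} \times \cdots \times 1 \times \mu')$ for some character $\mu'$ of $F^\times$.

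The delicate point of (i) is to determine $\mu'$. Comparing central characters --- recall that $\Theta(\pi)$ has the same central character as $\pi$ --- gives only $\mu'^2 = \mu^2\,|-|^{-(m-n)(m-n-1)/2}$, which pins $\mu'$ down only up to a quadratic character trivial on the centre of $\GSO(V)$; this ambiguity is invisible to central characters. I would remove it by a direct computation with the fixed extension of the Weil representation to $R$: realising $\omega_{V,W,\psi}$ in a Schr\"odinger model and writing $h \in \GO(V)$ of similitude $\lambda$ as the image of $(t_h, h) \in R$ with ${\rm sim}(t_h) = \lambda^{-1}$, the operator attached to $h$ is a dilation carrying the normalising factor $|\lambda|^{(m-n)(m-n-1)/4}$ relative to the isometry normalisation --- equivalently, this is read off the Jacquet module of $\Omega_{V,W}$ along the relevant parabolic, extending Kudla's computation~\cite[Thm. 2.8]{K} which underlies Proposition~\ref{P:unram1}. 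This gives $\mu' = \mu\,|-|^{-(m-n)(m-n-1)/4}$ and proves (i). I expect this bookkeeping with the extended Weil representation to be the main obstacle; the rest is formal.

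Parts (ii) and (iii) then follow quickly. Since the similitude theta lift does not depend on $\psi$, fix $\psi$ of conductor $\mathcal{O}_F$. If $\pi$ is $K$-unramified then $\pi^{K_\flat} \supseteq \pi^K \neq 0$, so some $\pi_\flat := \pi_\flat^{(i)}$ is $K_\flat$-unramified; by Proposition~\ref{P:unram1}(ii), $\theta_\psi(\pi_\flat)$ is $K'_\flat$-unramified, in particular nonzero, whence $\Theta(\pi) \neq 0$ and $\theta(\pi)$ exists. By the compatibility of the isometry and similitude theta correspondences \cite{Ro, GT1}, $\theta(\pi)|_{\SO(V)}$ contains $\theta_\psi(\pi_\flat)$, so $\theta(\pi)^{K'_\flat} \neq 0$; since $\theta(\pi)$ is, by part (i), a constituent of an unramified principal series of $\GSO(V)$, the analogue of Lemma~\ref{L:interaction} gives $\dim \theta(\pi)^{K'} = \dim \theta(\pi)^{K'_\flat} = 1$, which is (ii). Finally, parts (i) and (ii) together show that the Satake parameter of $\theta(\pi)$ has $T'_\flat$-component $(\chi_1(\varpi),\dots,\chi_n(\varpi),q^{m-n-1},\dots,q,1)$ and similitude component $\mu(\varpi)\,q^{-(m-n)(m-n-1)/4}$. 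Restricting the displayed commutative square of dual groups to maximal tori, its lower row realises $\iota_\flat$ exactly as in Proposition~\ref{P:unram1}(iii) --- identifying the $T'_\flat$-component above with $\iota_\flat(c(\pi_\flat), c({\rm triv}))$ and $c({\rm triv})$ with the principal $\SL_2$ --- while the remaining central $\mathbb{G}_m$'s are matched by the determination of $\mu'$ in part (i). Hence $c(\theta(\pi)) = \iota(c(\pi), c({\rm triv}))$, proving (iii).
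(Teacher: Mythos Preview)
Your arguments for (ii) and (iii) match the paper's almost exactly: reduce to the isometry case via Lemma~\ref{L:interaction} and Proposition~\ref{P:unram1}(ii), invoke the compatibility of isometry and similitude theta lifts (the paper cites \cite[Lemma~2.2]{GT1} for this), and then read off the Satake parameter from (i) by describing $\iota$ on maximal tori.

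For (i), however, your route diverges from the paper's and leaves a real gap. The paper does \emph{not} reduce to the isometry case; instead it argues by induction on $n$, using the explicit Jacquet modules of the induced Weil representation $\Omega_{V,W}$ along the maximal parabolic $P_1$ computed in \cite[Thms.~A.1--A.2]{GT2} (the similitude analogue of Kudla's \cite[Thm.~2.8]{K}). The resulting short exact sequence $0 \to J_0 \to R_{P_1}(\Omega_{V,W}) \to J_1 \to 0$ and Frobenius reciprocity reduce the claim to the induction hypothesis for $(W',V)$ and $(W',V')$, and the similitude exponent $|-|^{-(m-n)(m-n-1)/4}$ falls out of the formulas for $J_0$ and $J_1$ automatically.

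Your approach --- restrict $\Theta(\pi)$ to $\O(V)$, apply the isometry result, then ``lift back'' and fix the similitude character $\mu'$ --- is in principle viable, but the step you yourself flag as ``the main obstacle'' is not actually carried out. Knowing the central character only determines $\mu'^{2}$; resolving the remaining quadratic ambiguity requires knowing precisely how the chosen extension of $\omega_{V,W,\psi}$ to $R$ acts through the similitude direction. Your sketch (``a dilation carrying the normalising factor $|\lambda|^{(m-n)(m-n-1)/4}$'', or ``equivalently, read off the Jacquet module of $\Omega_{V,W}$'') is exactly the content of \cite[Thms.~A.1--A.2]{GT2}, which is the paper's input. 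So your proposal and the paper's proof ultimately rest on the same computation; the difference is that the paper organises it as a clean induction that delivers the full principal series (including $\mu'$) in one stroke, whereas your reduction-to-isometry route separates the isometry part from the similitude bookkeeping and then has to reassemble them. Until that bookkeeping is actually written down, your proof of (i) is incomplete.
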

 
 \begin{proof} 
 (i) This follows from  the results of \cite{GT2}. More precisely, \cite[Thms A.1 and A.2]{GT2}
determine the Jacquet modules of the induced Weil representation  with respect to the maximal parabolic subgroups of $\GSp(W)$ and $\GO(V)$, analogous to what Kudla did in \cite[Thm. 2.8]{K}  in the setting of  isometry theta correspondence. With these Jacquet modules at hand, the same argument as in the proof of \cite[Thm. 2.5 and Cor. 2.6]{K}   allows one to determine the behaviour of cuspidal support under the similitude theta correspondence.   The statement (i) is a special case of this result. 
\vskip 5pt

 We shall give the proof of (i) for the sake of completeness, proceeding  by induction on $n = 1/2 \cdot \dim W$. 
The base case when $n=0$ holds  trivially. Assume now that $n \geq 1$ and (after applying a Weyl group element if necessary), we may assume that
\[  \pi \hookrightarrow i_B(\chi_1 \times \dots \times \chi_n \times \mu). \]
By induction in stages, we have
\[  \pi \hookrightarrow  {\rm Ind}_{P_1}^G \chi_1 \boxtimes \pi' \]
where 
\begin{itemize}
\item $P_1$ is the maximal parabolic subgroup of $\GSp(W)$ stabilizing the isotropic line $X_1 = F e_1$ in $W$, which has Levi subgroup $\GL(X_1) \times \GSp(W')$, with $W' = \langle e_2,\dots, e_n, f_n,\dots,f_2 \rangle$;
\item $\pi' \in {\rm Irr}(\GSp(W'))$ is a subquotient of $i_{B'}(\chi_2 \times \dots \times \chi_n \times  \mu)$. 
\end{itemize}
Now it follows  that 
\[ 0 \ne 
\Theta(\pi)^{\vee} \subset \Theta(\pi)^* = \Hom_{\GSp(W)}(\Omega_{V,W}, \pi)  \subset \Hom_{\GSp(W)}(\Omega_{V,W}, {\rm Ind}_{P_1}^G \chi_1 \boxtimes \pi' ), \]
where the superscript $^\vee$ indicates contragredient of a $\GSO(V)$-module whereas the superscript $^*$ indicates the full linear dual of a vector space. 
By Frobenius reciprocity,
\[  \Hom_{\GSp(W)}(\Omega_{V,W}, {\rm Ind}_{P_1}^G \chi_1 \boxtimes \pi' ) = \Hom_{\GL(X_1) \times \GSp(W')} ( R_{P_1}(\Omega_{V,W}), \chi_1 \boxtimes \pi'), \]
where $R_{P_1}(\Omega_{V,W})$ denotes the normalized Jacquet module of $\Omega_{V,W}$ with respect to $P_1$.  This normalized Jacquet module is what  \cite[Thm A.2]{GT2} computes. 
\vskip 5pt

More precisely,   \cite[Thm A.2]{GT2} shows that there is a short exact sequence
\begin{equation} \label{E:jacquet}
 \begin{CD}
0 @>>> J_0 @>>> R_{P_1}(\Omega_{V,W}) @>>> J_1 @>>> 0 \end{CD} \end{equation}
of representations of $\GSO(V) \times \GL(X_1) \times \GSp(W')$. The submodule $J_0$ and quotient $J_1$ are described as follows:
\vskip 5pt

\begin{itemize}
\item  Let $Q_1 \subset \GSO(V)$ be the maximal parabolic subgroup stabilizing an isotropic line $Y_1$, so that its Levi subgroup is of the form $\GL(Y_1) \times \GSO(V')$, with $\dim V' = 2m-2$. Then
\[  J_0 =  {\rm Ind}_{Q_1 \times \GL(X_1) \times \GSp(W')}^{\GSO(V) \times \GL(X_1) \times \GSp(W')} C^{\infty}_c(F^{\times}) \otimes \Omega_{V',W'}  \]
where the action on the inducing data is as follows:
\vskip 5pt

\begin{itemize}
\item[-]  $\Omega_{V', W'}$  is the induced Weil representation of $\GSO(V') \times \GSp(W')$;
\item[-] $(a, h, b) \in \GL(Y_1) \times \GSO(V') \times \GL(X_1)$ acts on $S(F^{\times})$ via:  
\[    (a,h,b) \cdot f (x) = f( a^{-1} \sim_{V'}(h) \cdot x \cdot b). \]
\end{itemize}

\item  the quotient $J_1$ is given by: 
\[ J_1 = |-|^{m-n} \cdot |\sim_{W'}|^{\frac{n-m}{2}} \cdot \Omega_{V, W'} \]
where $\Omega_{V,W'}$ is   the induced Weil representation   of $\GSO(V) \times \GSp(W')$ and $|-|^{m-n}$ is a character of $\GL(X_1)$. 
\end{itemize}
\vskip 5pt

Applying $\Hom_{\GL(X_1) \times \GSp(W')}( -, \chi_1 \boxtimes \pi')$ to the short exact sequence (\ref{E:jacquet}), one obtains the exact sequence of (not-necessarily smooth) $\GSO(V)$-modules:
\begin{equation} \label{E:jacquet2}
 \begin{CD}
0 @>>> \Hom_{\GL(X_1) \times \GSp(W')}(J_1, \chi_1 \boxtimes \pi') @>>> \Hom_{\GL(X_1) \times \GSp(W')}(R_{P_1}(\Omega_{V,W}), \chi_1 \boxtimes \pi') \\
@.  @. @VVV  \\
 @. @. \Hom_{\GL(X_1) \times \GSp(W')}(J_0, \chi_1 \boxtimes \pi').
\end{CD} \end{equation}
On considering the subspace of $\GSO(V)$-smooth vectors in the above exact sequence, one deduces that $\Theta(\pi)^{\vee}$ lies in a short  exact sequence
\[ \begin{CD}
 0 @>>> A_1 @>>> \Theta(\pi)^{\vee} @>>> A_0  @>>> 0  \end{CD} \]
 where $A_1$ (resp. $A_0$) is a smooth submodule of the first (resp. last) Hom-space in (\ref{E:jacquet2}).
Hence, if $\sigma$ is any irreducible subquotient of $\Theta(\pi)$, then $\sigma$ is a subquotient of $A_0^{\vee}$ or $A_1^{\vee}$. To establish the inductive step of the argument, it remains to verify that any irreducible subquotient of $A_0^{\vee}$ or $A_1^{\vee}$ is an irreducible subquotient of 
\[ i_{B}(\chi_1 \times \chi_2\times\dots\times \chi_n \times |-|^{m-n-1} \times |-|^{m-n-2} \times\dots\times 1 \times \mu |-|^{- \frac{(m-n)(m-n-1)}{4}}). \]
To this end, we now explicitly determine the $\GSO(V)$-modules 
\[   \Hom_{\GL(X_1) \times \GSp(W')}(J_1, \chi_1 \boxtimes \pi')  \quad \text{and} \quad  
 \Hom_{\GL(X_1) \times \GSp(W')}(J_0, \chi_1 \boxtimes \pi') \]
in (\ref{E:jacquet2})  in turn.
\vskip 5pt

For the first Hom space in (\ref{E:jacquet2}), we see that for
\[  \Hom_{\GL(X_1) \times \GSp(W')}(J_1, \chi_1 \boxtimes \pi')  \ne 0, \]
we need 
\[  \chi_1 = |-|^{m-n}. \]
When this holds, 
\[ \Hom_{\GL(X_1) \times \GSp(W')}(J_1, \chi_1 \boxtimes \pi') = \Hom_{\GSp(W')} (\Omega_{V, W'},  \pi' \cdot |\sim_{W'}|^{\frac{m-n}{2}}) =  \Theta( \pi' \cdot |\sim_{W'}|^{\frac{m-n}{2}})^*, \]
so that
\[    \Theta( \pi' \cdot |\sim_{W'}|^{\frac{m-n}{2}}) \twoheadrightarrow A_1^{\vee}. \]
By induction hypothesis (applied to $\GSO(V) \times \GSp(W')$), every irreducible subquotient of $ \Theta( \pi' \cdot |\sim_{W'}|^{\frac{m-n}{2}}) $ is a subquotient of 
the principal series 
\begin{align}
 & i_B( \chi_2 \times  \dots \times \chi_n \times |-|^{m-n} \times |-|^{m-n-1} \times \dots \times  1 \times \mu |-|^{-\frac{(m-n+1)(m-n)}{4}} \cdot |-|^{\frac{m-n}{2}} ) \notag \\
  = \quad  & i_B( \chi_2 \times  \dots \times \chi_n \times \chi_1\times  |-|^{m-n-1} \times \dots \times  1 \times \mu |-|^{-\frac{(m-n-1)(m-n)}{4}}). \notag
  \end{align}
  Hence, we see that (after permuting the $\chi_i$'s), any irreducible subquotient of $A_1^{\vee}$  is an irreducible subquotient of  
  \[   i_B( \chi_1 \times  \dots \times \chi_n \times  |-|^{m-n-1} \times \dots \times  1 \times \mu |-|^{-\frac{(m-n-1)(m-n)}{4}}) \]
  as desired.

 For the last Hom-space in  (\ref{E:jacquet2}),   we have
\[  \Hom_{\GL(X_1) \times \GSp(W')}(J_0, \chi_1 \boxtimes \pi') = \left( {\rm Ind}_{Q_1}^{\GSO(V)}  \chi_1^{-1} \otimes \Theta_{V', W'}(\pi' ) \cdot  (\chi_1 \circ \sim_{V'})\right)^*, \]
so that 
\[  A_0 \subset  \left( {\rm Ind}_{Q_1}^{\GSO(V)}  \chi_1^{-1} \otimes \Theta_{V', W'}(\pi' ) \cdot  (\chi_1 \circ \sim_{V'})\right)^{\vee},  \]
or equivalently
\[  {\rm Ind}_{Q_1}^{\GSO(V)}  \chi_1^{-1} \otimes \Theta_{V', W'}(\pi' ) \cdot  (\chi_1 \circ \sim_{V'}) \twoheadrightarrow A_0^{\vee}. \]
By induction hypothesis (applied to the similitude theta correspondence for $\GSO(V')\times  \GSp(W')$), one sees that any irreducible subquotient of $\Theta_{V', W'}(\pi' )$
is a subquotient of the principal series representation
\[ i_{B'}(\chi_2 \times  \dots \times \chi_n \times  |-|^{m-n-1} \times\dots\times 1 \times \mu |-|^{-\frac{(m-n)(m-n-1)}{4}}). \]
Hence, any irreducible subquotient of $\Theta_{V', W'}(\pi' ) \cdot (\chi_1 \circ \sim_{V'})$ is an irreducible subquotient of
\[   i_{B'}(\chi_2 \times  \dots \times \chi_n \times  |-|^{m-n-1} \times\dots\times 1 \times  \chi_1 \mu |-|^{-\frac{(m-n)(m-n-1)}{4} }). \]
It follows that any irreducible subquotient of $A_0^{\vee}$ is an irreducible subquotient of 
\[  i_B( \chi_1^{-1} \times  \chi_2 \times  \dots \times \chi_n, |-|^{m-n-1} \times\dots\times 1 \times  \chi_1 \mu |-|^{-\frac{(m-n)(m-n-1)}{4}} ). \]
Applying a Weyl element (exchanging $e_1$ and $f_1$), we see that the above principal series representation has the same irreducible subquotients as
\[  i_B( \chi_1 \times  \chi_2 \times \dots \times \chi_n, |-|^{m-n-1} \times\dots\times 1 \times  \mu |-|^{-\frac{(m-n)(m-n-1)}{4} }), \]
as desired.
\vskip 5pt

  \vskip 5pt
 
We have thus established the inductive step of the argument and completed the proof of (i).

\vskip 10pt

\noindent (ii) Suppose that $\pi \in {\rm Irr}_{K}(\GSp(W))$ is a submodule of an unramified principal series representation 
  \[ i_B(\chi_1 \times \chi_2 \times\dots \times \chi_n \times \mu).\]
 By Lemma \ref{L:interaction},  as an $\Sp(W)$-module, $\pi$ contains a unique irreducible summand which is $K_{\flat}$-unramified. 
By Proposition \ref{P:unram}(iii),  $\pi_{\flat}$ has nonzero theta lift to  an irreducible $K_{\flat}'$-unramified  representation  $\theta_{\psi}(\pi_{\flat})$ of   $\GSO(V)$.
It follows by  \cite[Lemma 2.2]{GT1} that $\theta(\pi)$ is nonzero irreducible, has the same central character as $\pi$ and contains $\theta_{\psi}(\pi_{\flat})$ as a 
$\SO(V)$-summand. 
 \vskip 5pt
 
 It remains to see that $\theta(\pi)$ is $K'$-unramified. By (i), $\theta(\pi)$ is a constituent of the $K'$-unramified principal series representation 
 \[ i_{B'}(\chi_1 \times \chi_2\times\dots\times \chi_n \times |-|^{m-n-1} \times |-|^{m-n-2} \times\dots\times 1 \times \mu |-|^{- (m-n)(m-n-1)/4}). \]
 Since $\theta(\pi)$ contains $\theta_{\psi}(\pi_{\flat})$ as a  $\SO(V)$-summand, one has
 \[  \dim \theta(\pi)^{K'_{\flat}}  \ne 0. \] 
It follows by Lemma \ref{L:interaction} (or rather its analog for $\GSO(V)$) that $\theta(\pi)$ is $K'$-unramifiied. 
   \vskip 5pt 

\noindent (iii) To understand the theta lift of unramified representations in terms of Satake parameters, let us describe the map $\iota$  on the level of maximal tori.  Recall from (\ref{E:tori2}) that we have an identification $T  =  T_{\flat} \times \mathbb{G}_m$. This induces an identification
\[ T^{\vee}(\C) =  T_{\flat}^{\vee}(\C) \times \C^{\times} \]
so that the natural map  $T^{\vee} \rightarrow T_{\flat}^{\vee}$ induced by the inclusion $T_{\flat} \hookrightarrow T$ is given by the first projection. 
If $W'$ is a symplectic space of dimension $2m-2n-2$,  with associated maximal torus $S \subset \GSp(W')$, then we likewise have
\[   S^{\vee}(\C)   = S_{\flat}(\C) \times \C^{\times}. \]
Similarly, for the group $\GSO(V)$, one has
\[   {T'}^{\vee}(\C)  = {T'_{\flat}}^{\vee}(\C) \times \C^{\times}. \]
On the level of isometry groups, one has the inclusion 
\[
 \iota_{\flat}  : T_{\flat}^{\vee} \times  S_{\flat}^{\vee} \longrightarrow {T'_{\flat}}^{\vee}(\C) \]
 described by  (\ref{E:iotaflat}).  Using the above description of the dual tori, the map $\iota$ is given by
\[ \begin{CD} 
  T^{\vee} \times S^{\vee}(\C) @>\iota>>  {T'}^{\vee}(\C) \\
  @|   @| \\
  (T_{\flat}^{\vee} \times \C^{\times}) \times (S_{\flat}^{\vee} \times \C^{\times}) @>\iota_{\flat} \times {\rm mult}>>  {T'_{\flat}}^{\vee} \times \C^{\times} \end{CD} \]
  where $\iota_{\flat}$ is as given in (\ref{E:iotaflat}) and ${\rm mult}: \C^{\times} \times \C^{\times} \longrightarrow \C^{\times}$ is the multiplication map.
\vskip 5pt

Now one has:
\[  c({\pi}) = \left( (\chi_1(\varpi), \dots,\chi_n(\varpi)),  \mu(\varpi) \right)  \in T_{\flat}^{\vee} \times \C^{\times}  = T^{\vee} \]
and
\[  c({\rm triv}) = \left( ( q^{m-n-1}, \dots, q), q^{-\frac{(m-n)(m-n-1)}{4}} \right) \in S_{\flat}^{\vee} \times \C^{\times}, \]
so that
\[ \iota( c({\pi}) , c({\rm triv}) ) = \left( (\chi_1(\varpi),\dots,\chi_n(\varpi), q^{m-n-1},\dots,q,1), \mu(\varpi) \cdot q^{-\frac{(m-n)(m-n-1)}{4}}  \right) \in {T'_{\flat}}^{\vee} \times \C^{\times}. \]
By (i), this is the Satake parameter of $\theta(\pi)$.
\end{proof}
\vskip 5pt

\subsection{\bf Proof of Proposition \ref{P:unram}}
When we specialize Proposition \ref{P:unram2} to the case $m = n+1$ and take into account of (\ref{E:GSO}),  we obtain Proposition \ref{P:unram}.

 \vskip 15pt

\end{document}